\newcommand{\posR}{\overline{\mathbb{R}}_+}
\newcommand{\RR}{\mathbb{R}}
\theoremstyle{thmstyleone}%
\newtheorem{theorem}{Theorem}
\newtheorem{prop}[theorem]{Proposition}%
\newtheorem{lem}[theorem]{Lemma}%
\theoremstyle{definition}
\newtheorem{cor}[theorem]{Corollary}
\theoremstyle{thmstyletwo}%
\newcommand{\FD}{\mathfrak{D}}
\theoremstyle{thmstylethree}%
\newtheorem{definition}{Definition}%
\begin{document}

\title[Article Title]{Asymmetrically Weighted Dowker Persistence and Applications in Dynamical Systems}


\author*[1]{\fnm{Tobias} \sur{Timofeyev}}\email{Tobias.Timofeyev@uvm.edu}

\author[2]{\fnm{Christopher} \sur{Potvin}}\email{cpotvin@warren-wilson.edu}

\author[3]{\fnm{Benjamin} \sur{Jones}}\email{jones657@msu.edu}
\equalcont{These authors contributed equally to this work.}

\author[4]{\fnm{Kristin M.} \sur{Kurianski}}\email{kkurianski@fullerton.edu}
\equalcont{These authors contributed equally to this work.}

\author[5]{\fnm{Miguel} \sur{Lopez}}\email{mlopez3@sas.upenn.edu}
\equalcont{These authors contributed equally to this work.}

\author[6,7]{\fnm{Sunia} \sur{Tanweer}}\email{tanweer1@msu.edu}
\equalcont{These authors contributed equally to this work.}

\affil[1]{\orgdiv{Department of Mathematics and Statistics}, \orgname{University of Vermont}, \orgaddress{\street{85 South Prospect Street}, \city{Burlington}, \postcode{05405}, \state{Vermont}, \country{USA}}}

\affil[2]{\orgdiv{Department of Environmental Studies (Mathematics and Data Science)}, \orgname{Warren Wilson College}, \orgaddress{\street{701 Warren Wilson Road}, \city{Swannanoa}, \postcode{28778}, \state{North Carolina}, \country{USA}}}

\affil[3]{\orgdiv{Department of Mathematics}, \orgname{Michigan State University}, \orgaddress{\street{619 Red Cedar Road}, \city{East Lansing}, \postcode{48823}, \state{MI}, \country{USA}}}

\affil[4]{\orgdiv{Department of Mathematics}, \orgname{California State University, Fullerton}, \orgaddress{\street{800 N. State College Blvd.}, \city{Fullerton}, \postcode{92831}, \state{CA}, \country{USA}}}

\affil[5]{\orgdiv{Department of Mathematics}, \orgname{University of Pennsylvania}, \orgaddress{\street{209 S. 33rd Street}, \city{Philadelphia}, \postcode{19104}, \state{PA}, \country{USA}}}

\affil[6]{\orgdiv{Department of Mechanical Engineering}, \orgname{Michigan State University}, \city{East Lansing}, \postcode{48823}, \state{MI}, \country{USA}}

\affil[7]{\orgdiv{Department of Computational Mathematics, Science and Engineering}, \orgname{Michigan State University}, \orgaddress{\city{East Lansing}, \postcode{48823}, \state{MI}, \country{USA}}}


\abstract{By their nature it is difficult to differentiate chaotic dynamical systems through measurement. In recent years, work has begun on using methods of Topological Data Analysis (TDA) to qualitatively type dynamical data by approximating the topology of the underlying attracting set. This comes with the additional challenges of high dimensionality incurring computational complexity along with the lack of directional information encoded in the approximated topology. Due to the latter fact, standard methods of TDA for this dynamical data do not differentiate between periodic cycles and non-periodic cycles in the attractor. We present a framework to address both of these challenges. We begin by binning the dynamical data, and capturing the sequential information in the form of a coarse-grained weighted and directed network. We then calculate the persistent Dowker homology of the asymmetric network, encoding spatial and temporal information. Analytically, we highlight the differences in periodic and non-periodic cycles by providing a full characterization of their one-dimensional Dowker persistences. We prove how the homologies of graph wedge sums can be described in terms of the wedge component homologies. Finally, we generalize our characterization to cactus graphs with arbitrary edge weights and orientations. 
Our analytical results give insight into how our method captures temporal information in its asymmetry, producing a persistence framework robust to noise and sensitive to dynamical structure. 
}

\keywords{Dowker persistence, weighted cycles, dynamical systems, dominating sets}


\pacs[MSC Classification]{55N31, 37B10, 05C20, 05C90}

\maketitle

\section{Introduction}\label{sec1}


A key objective in the study of large measured dynamical systems is classification of steady states. One challenge is to distinguish between qualitative states of a chaotic system when measurements are nonrepeatable. In climate science and meteorology, the study of recurrent, persistent patterns in atmospheric dynamics, such as blocking events or phases of a large scale oscillation, have served as qualitative descriptors toward these goals. These patterns are known as ``regimes," and the existence and significance of such features have been studied extensively, despite varied definitions and detection methodologies (\cite{Hannachi2017,Gilmore_1998,Palmer99}). Recent work, such as \cite{Strommen_2022} and \cite{Kappe_Bottinger_Leitte_2022}, has proposed to frame regimes via a unifying mathematical framework in topological data analysis. This is part of a growing body of work that seeks to understand dynamical systems via an underlying topology (\cite{Yalniz20, Maletic_Zhao_Rajkovic_2016,Tanweer_Khasawneh_Munch_Tempelman_2024}).

Persistent homology is of particular utility in these applied settings, promising to bridge the gap between discrete data and smooth manifolds. This utility is in service of finding topological invariants and the low dimensional summary of qualitative features in the form of a persistence barcode or landscape. In addition to direct interpretation, this description has also found use as input to  machine learning algorithms for automated system classification (\cite{Pun_Lee_Xia_2022, Hussain_Shah_Rafia_Fatima_Huerta-Cuellar_Garcia-Lopez_Mata_amirez_Jaimes-Reategui_2025}).

One limitation of current approaches to finding topological invariants in dynamical data (such as regimes) is that we would like those topological invariants to be features of dynamical behavior, and their topology alone does not explicitly describe a qualitative dynamic. While topological descriptions of state space point cloud data capture geometric structure, they do not encode temporal ordering or the direction of flows in that space. This aspect represents a limitation of the topological information encoded in the point cloud samples. In this work we study how these features can be made to reflect both positional patterns in state space and the way in which the system directs flow through it. Our goal is to incorporate temporal information into topological approaches for the classification of dynamical systems.

We present a framework for a topological description of dynamics, including temporal information through asymmetric structure. We draw from the field of symbolic dynamics, in which a dynamical system is partitioned into a discrete set of states, and trajectories are studied through the sequences they generate. In this work, we construct such a representation by partitioning the state space into coarse-grained bins, following approaches such as \cite{molkenthin_networks_2014} and \cite{Myers_2023}. These bins consolidate proximate states, and a directed graph is formed by recording transitions between bins along the trajectory.

This directed network encodes temporal information through edge directionality, providing a natural bridge between time series data and combinatorial representations of dynamics. Other discretization strategies exist, including Markov partitions (\cite{Adler_1996}), clustering-based approaches (\cite{Franch2020}), and entropy-based symbolizations (\cite{Galatolo2010}). The binning method was chosen because it provides a straightforward approach to recovering the physical attributes of each state of the dynamical system, and a regular granularity allows paths and persistence over the network to correspond with distances in phase space.

The resulting graph representation allows us to study dynamical behavior through its structure. In particular, we focus on periodic behavior and recurrent dynamics (\cite{Bauer_Hien_Junge_Mischaikow_2025,Strommen_2022}), which manifest as cycles in the network and corresponding topological features. This representation ultimately enables the application of persistence for directed topologies to extract a qualitative description of the system.

For our persistent homology computations, we employ an asymmetric homology theory known as Dowker homology, originally introduced in \cite{Dowker_1952} for relations between two sets and later extended to directed networks in \cite{Chowdhury_Memoli_2016}. This construction is part of a larger body of work developing persistent homology for directed combinatorial structures, including approaches based on flag complexes (\cite{chaplin_notion_2024, luet2019Flagser}), path homology (\cite{Grigor'yan20}), and walk-length filtrations (\cite{Munoz_Munch_Khasawneh_2025}). 
We focus on the Dowker complex as it has strong underlying symmetry in directionality via Dowker Duality, which guarantees equivalent homology under time reversal  (see Section \ref{sec:network_persistence} and \cite{Dowker_1952}) as well as existing theory in computability.
Similar methods have also been applied in settings such as biological systems (\cite{Peek_Pritam_Skerritt_Chalup_2025}) and time-dependent networks (\cite{Ye_Jiang_Jiang_Li_2023}). In this framework, we further develop theoretical connections between Dowker persistence and dynamical behavior, particularly in relation to cycles.

Section \ref{sec:bg} provides background on persistent homology, directed network persistence, and symbolic dynamics. We then establish a connection between Dowker homology and the graph-theoretic notion of dominating sets (\cite{CARO_Dominating_Number}) in Section \ref{sec:dom_set_persistence}, showing how persistence depends on the size and structure of such sets. In Section \ref{sec:cycle_persistence}, we give a complete classification of one-dimensional homology for weighted, directed cycle graphs with arbitrary edge orientations. This relates persistent features to underlying cycle structure. We extend these results in Section \ref{sec:wedge_persistence} by showing that persistence behaves additively under wedge sums, allowing for the characterization of a broader directed graph class, known as cactus graphs. Returning to the dynamical setting, Section \ref{sec:dyn_traj} shows that trajectories induce contractible maximal Dowker complexes, ruling out infinite persistence in this context. Finally, in Section \ref{sec:experiments} we present computational experiments on systems including Lorenz `63 and Charney–DeVore, demonstrating how these topological features reflect underlying dynamical behavior. The classification of cactus graphs inspired the acronymization for our publicly available code (CACTIS)\footnote{Github repository available at \url{https://github.com/cactismath/CACTIS}}.

\section{Background}\label{sec:bg}

\subsection{Persistent Homology}
The homology $H_n(X)$ of a topological space $X$ is an algebraic description of its topological structure in terms of non-bounding cycles of dimension $n$. This classical theory was extended by \cite{Edelsbrunner2000}, permitting the study of point cloud data sets endowed with topologies of varying granularity. At each level, the birth and death of homological features is recorded in a diagram as a summary of its topological structure. Numerous applications of topological persistence arise in biology (\cite{Nicolau2011}), neuroscience (\cite{Giusti2016}), image processing (\cite{Carlsson2007}) and more (\cite{DONUT}). The study of persistence itself has become an independent subfield of topology and matured the practice into a general purpose data analysis tool supported with implementations in various programming languages (\cite{giotto}, \cite{Fasy2014}, \cite{javaplex}).

We assume the reader is familiar with simplicial homology and other basic definitions from algebraic topology (c.f. \cite{Hatcher_AT}) but give a brief summary here to establish our notation, which we borrow from \cite{chowdhury2018AsymNets}.
A \textbf{simplicial complex} $X$ over a set $V$ is a collection of subsets of $V$ called simplices. The collection $X$ is required to be downward closed, that is, if a simplex $\sigma$ is in $X$, then each subset $\sigma'\subseteq \sigma$ must also be in $X$. We will assume that each subset is linearly ordered $\sigma = \{v_0 < v_1 < \dots < v_k\}$ where $v_i \in V$ and $k \in \mathbb{Z}_+$. We call such a $\sigma$ a $k$\textbf{-simplex} and denote them by $\sigma = [v_0, v_1, \dots, v_k]$ to emphasize the orientation. Any simplicial complex can be oriented by putting a total order on the underlying vertex set. Note that the chosen orientation is arbitrary and only needed for the homology calculation.

For a simplicial complex $X$ and a dimension $k \in \mathbb{Z}_+$, a $k$-chain is a formal sum of simplices $\sum_{i \in I} a_i \sigma_i$ such that $I$ is a finite index set, $a_i \in \mathbb{R}$ and $\sigma_i$ is a $k$-simplex in $X$. We denote the vector space of $k$-chains by $C_k(X)$. These spaces are related by \textbf{boundary maps} $\partial_k : C_{k}(X) \to C_{k-1}(X)$ which lower the dimension of a $k$-simplex via an alternating sum:
\[ \partial_k ([v_0, v_1, \dots, v_k]) := \sum_{i = 0}^k (-1)^i[v_0, v_1, \dots , \hat{v}_i, \dots, v_k]\]
where $\hat{v}_i$ denotes that $v_i$ is omitted from the corresponding simplex. This collection of chain spaces and boundary maps form a \textbf{chain complex} $C_\bullet(X) = (C_k(X), \partial_k)_{k \in \mathbb{Z}_+}$, which satisfy a boundary condition $\partial_{k} \circ \partial_{k+1} = 0$ for all $k \in \mathbb{Z}_+$. The $k$-\textbf{cycles} of a chain complex $C_\bullet(X)$ are those $k$-chains in $\ker\partial_k$ and $k$-\textbf{boundaries} are those $k$-chains in $\text{im} \, \partial_{k+1}$. The $k$-th homology of the simplicial complex $X$ is defined as the quotient $H_k(X) := \ker\partial_k / \text{im} \, \partial_{k+1}$. We refer to non-zero equivalence classes of $H_k(X)$ as topological features.

The basis of persistent homology is to track the changes in homology in a sequence of simplicial complexes. This is enabled by the fact that the homology construction $H_k(-)$ is functorial, that is, given a morphism of simplicial complexes $f : X \to Y$, there is an induced map on homology $H_k(f) : H_k(X) \to H_k(Y)$ for each dimension $k$. A \textbf{filtration} is a sequence of simplicial complexes $\{X_i\}_{i \in \mathbb{N}}$ totally ordered by containment, that is, $X_i\subseteq X_{i+1}$ for each $i$. The injections $X_i \hookrightarrow X_{i+1}$ induce maps on homology $\mu_{i,j} : H_k(X_i) \to H_k(X_j)$ for $i \leq j$. For each dimension $k$, the sequence of vector spaces and induced maps $\{H_k(X_i), \mu_{i,j}\}_{i \leq j}$ is the \textbf{persistence module}. A fundamental theorem in TDA asserts that persistence modules are completely characterized by \textbf{persistence barcodes}, which record the birth and death times of the homology classes (see Figure \ref{fig:unweighted_consistent_hex} as an example) (\cite{edelsbrunner2010computational}). Barcodes provide a visual summary of the number and lifespan of topological features at each stage of the filtration. 

We also introduce a tool used in the analysis of a complexes topology. Given a finite family of sets $C=\{U_i\}_{i\in I}$ with indexing set $I$, the \textbf{nerve} of $C$, denoted  Nerv$(C)$ is defined as the set family containing all $J\subseteq I$ such that $\cap_{i\in J}U_i\neq \emptyset$. The nerve is a complex that encodes that intersection structure of $C$. The main use of the nerve for this work is the setting where $K=\cup_{i\in I}K_i$ is an abstract simplicial complex and each $U_i=|K_i|,$ where $|\cdot|$ is the geometric realization.  The \textbf{nerve theorem} then states that if $\cap_{i\in J} U_i$ is empty or contractible for all $J\subseteq I$, then Nerv$(C)$ is homotopy equivalent to $K=\cup_{i\in I}K_i$ (\cite{Borsuk1948}).

\subsection{Network Persistence}\label{sec:network_persistence}

We now overview how persistent homology can be used to study the shape of dynamical system trajectories. One could take the trajectory as a point cloud and apply the standard pipeline, but this would be neglecting the sequencing of points in the trajectory. We model discrete samples of a trajectory as directed networks and apply the work of \cite{chowdhury2018AsymNets} to study their homology. A key insight from that work was to use the Dowker complex of the adjacency matrix to encode directionality within the simplicial representation of the digraph. This is opposed to other simplicial models of directed networks such as the flag complex \cite{luet2019Flagser}.

The Dowker complex defined by \cite{Dowker_1952} is a way of extracting a topological space from a relation. Formally, we define a relation as a Boolean function $R : X \times Y \to \{0,1\}$ between two sets $X$ and $Y$. Note that every relation admits a transpose $R^T : Y \times X \to \{0,1\}$ where $R^T(y,x) := R(x,y)$. 

\begin{definition}
    The \textbf{Dowker source complex of $R$} is a simplicial complex $\FD^\text{so}(R)$ with vertices $X$ and simplicies $\sigma \subseteq X$ whenever there exists a $y \in Y$ such that $R(x,y) = 1$ for every $x \in \sigma$. 
\end{definition}

Notice this definition is inherently asymmetrical; one can give a reciprocal definition of the \textbf{Dowker sink complex} $\FD^{si}(R)$, with vertices $Y$ and simplices governed by $X$. Equivalently, this is the Dowker source complex of the transposed relation $\FD^\text{si}(R) = \FD^\text{so}(R^T)$. A celebrated theorem of Dowker is that, homotopically, these two constructions are equivalent. 

\begin{theorem}[Dowker]\label{thm:dowker}
    For a binary relation $R$, the simplicial complexes $\FD(R)$ and $\FD^{si}(R)$ are homotopy equivalent. 
\end{theorem}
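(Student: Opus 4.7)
The plan is to invoke the nerve theorem just introduced, applied to a natural cover of $\FD^{\mathrm{so}}(R)$ by closed simplices indexed by $Y$, and then to recognize the resulting nerve as $\FD^{\mathrm{si}}(R)$ on the nose. In slogan form, I aim to show $|\FD^{\mathrm{so}}(R)| \simeq \mathrm{Nerv}(\text{cover}) = \FD^{\mathrm{si}}(R)$, so that no chain-level computation is needed at all.

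To set up the cover, for each $y \in Y$ I would put $X_y := \{x \in X : R(x,y) = 1\}$ and let $Y' := \{y \in Y : X_y \neq \emptyset\}$. By the definition of the source complex, each nonempty $X_y$ is itself a simplex of $\FD^{\mathrm{so}}(R)$, so I can let $\Delta_y \subseteq |\FD^{\mathrm{so}}(R)|$ denote its closed geometric realization; $\Delta_y$ is a closed simplex, hence contractible. Conversely, every simplex $\sigma \in \FD^{\mathrm{so}}(R)$ is by definition contained in some $X_y$ and so lies in $\Delta_y$, yielding the cover $|\FD^{\mathrm{so}}(R)| = \bigcup_{y \in Y'} \Delta_y$. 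For the nerve theorem's hypothesis I would observe that $\bigcap_{y \in J} \Delta_y$ is the closed simplex on the vertex set $\bigcap_{y \in J} X_y$, and is therefore either empty or a contractible closed simplex. The nerve theorem then delivers $|\FD^{\mathrm{so}}(R)| \simeq \mathrm{Nerv}(\{\Delta_y\}_{y \in Y'})$.

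The final step is to identify this nerve with $\FD^{\mathrm{si}}(R)$. A subset $J \subseteq Y'$ lies in $\mathrm{Nerv}(\{\Delta_y\})$ iff $\bigcap_{y \in J} \Delta_y \neq \emptyset$, iff $\bigcap_{y \in J} X_y \neq \emptyset$, iff there exists $x \in X$ with $R(x,y) = 1$ for every $y \in J$. But that is exactly the defining condition for $J$ to be a simplex of $\FD^{\mathrm{si}}(R) = \FD^{\mathrm{so}}(R^T)$, and the vertex sets match as well, since a singleton $\{y\}$ is a simplex of $\FD^{\mathrm{si}}(R)$ precisely when $X_y \neq \emptyset$. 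Thus the two simplicial complexes coincide literally, and composing with the nerve equivalence gives the desired homotopy equivalence.

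There is no serious obstacle beyond careful bookkeeping. One must take \emph{closed} simplices in the cover so that the nerve theorem in the stated form applies, and one must discard the degenerate $y \in Y$ with $X_y = \emptyset$, which appear neither in the nerve nor in $\FD^{\mathrm{si}}(R)$. A more hands-on alternative, in Dowker's original style, would construct explicit chain maps between $C_\bullet(\FD^{\mathrm{so}}(R))$ and $C_\bullet(\FD^{\mathrm{si}}(R))$ together with a chain homotopy between the compositions and the identity, but the nerve-theorem route is much cleaner given the groundwork already laid out in Section \ref{sec:bg}.
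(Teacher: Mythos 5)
Your argument is correct, and it is the standard modern proof of Dowker's theorem. The paper itself states this result as background and cites \cite{Dowker_1952} and \cite{chowdhury2018AsymNets} rather than proving it, so there is no in-paper proof to compare against; but your route — covering $|\FD^{\mathrm{so}}(R)|$ by the closed simplices $\Delta_y$ on the witness sets $X_y$, checking that all intersections are closed simplices (hence empty or contractible), and identifying the resulting nerve with $\FD^{\mathrm{si}}(R)$ — is exactly the argument that the nerve theorem stated in Section \ref{sec:bg} is set up to support, and each step checks out (in particular, $\bigcap_{y\in J}\Delta_y$ really is the closed simplex on $\bigcap_{y\in J}X_y$, since a point of the realization lies in $\Delta_y$ iff its support is contained in $X_y$). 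Two minor bookkeeping points, neither a gap: the nerve theorem as stated applies to finite families, so one should note that $Y$ (or at least $Y'$) is finite in the paper's setting; and the same "discard unwitnessed vertices" convention you apply to $Y$ should implicitly be applied to $X$, since an $x$ related to no $y$ contributes no simplex to $\FD^{\mathrm{so}}(R)$ either. Dowker's original 1952 proof is the chain-level construction you mention in passing; the nerve-theorem proof buys brevity and fits the paper's toolkit, at the cost of invoking a black-box homotopy-theoretic result.
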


\begin{figure}[h!]
    \centering
    \includegraphics[width=10cm]{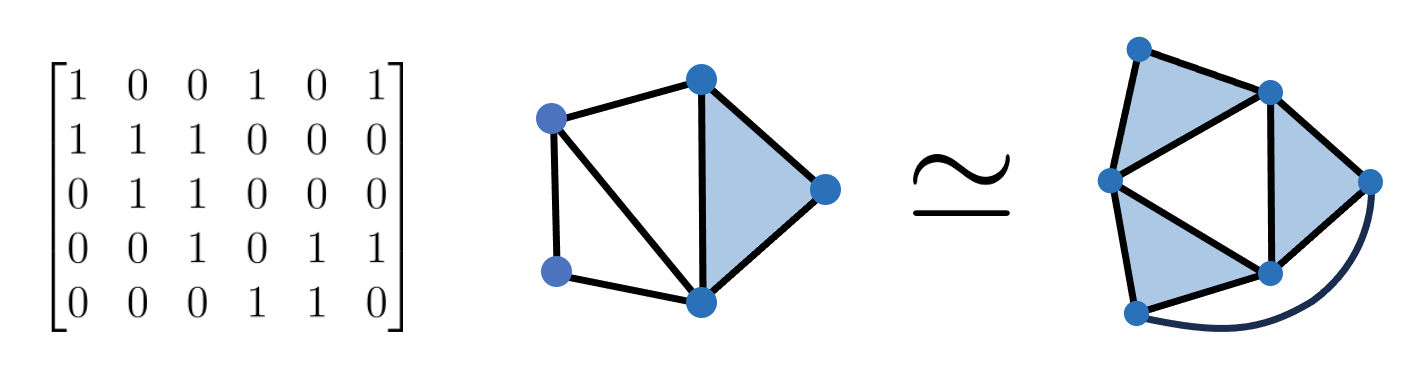}
    \caption{A relation $R$, represented as a binary matrix, followed by the Dowker complex $\FD(R)$ and $\FD^{si}(R)$. Both of these complexes are homotopy equivalent to a wedge of two circles.}
    \label{fig:enter-label}
\end{figure}

\begin{definition}
    A \textbf{network} or \textbf{weighted digraph} $G = (X, \omega)$ is a set of vertices $X$ together with a function $\omega : X \times X \to \posR := \RR_{\geq 0} \cup \{\infty\}$. 
\end{definition}

A \textbf{path} $p : x \rightsquigarrow y$ in $G$ is a finite sequence of vertices $p = ( x_0, x_1, \dots, x_n)$ where $x_0 = x$ and $x_n = y$. The weight of a path $p$ is the sum $\omega(p) = \sum_i \omega(x_i,x_{i+1})$ or $\omega(p) = 0$ if $p=(x_0)$ is the constant path. We associate to $G$ its shortest path function $d : X \times X \to \posR$ where 
    \begin{equation}\label{def: shortest path function}
    d(x,y) := \inf_{p: x \rightsquigarrow y} \omega(p). 
    \end{equation}
\begin{definition}
    Let $G=(X,\omega)$ be a weighted digraph. The \textbf{path completion} of $G$ is the weighted digraph $P(G) = (X,d)$ where $d$ is the shortest path function of $G$ in Equation \ref{def: shortest path function}. 
\end{definition}
    
The function $d$ is a so called \textbf{Lawvere metric}, a relaxation of a standard metric, not requiring the symmetry or separation axioms of a standard metric, as well as possibly taking infinite values. This distance function allows us to treat a graph as a Lawvere metric space on which to consider Dowker persistence. It is easy to show that the path completion is idempotent, that is, $P(P(G))=P(G)$.

We now turn our attention to the Dowker filtration over a network. Given a network $G=(X,\omega)$, define $R_\delta(G) : X \times X \to \{0, 1\}$ to be the relation
\begin{equation}
        R_\delta(G)(x,y) := 
        \begin{cases}
            1, & \omega(x,y) \leq \delta \\
            0, & \text{else}
        \end{cases}
\end{equation}


\begin{definition}[Dowker Filtration]
    Let $G = (X, \omega)$ be a weighted digraph. The \textbf{Dowker source filtration} $\{\FD^\text{so}_\delta(G):=\FD^\text{so}(R_\delta(P(G))) \}_{\delta \in \posR}$ is a filtration of Dowker complexes associated to the relation functions $R_\delta(P(G))$. The \textbf{Dowker sink filtration} counterpart is defined analogously with $\FD_\delta^\text{si}(R_\delta(P(G)))$.
\end{definition}

The terminology of `sink' versus `source' corresponds to how $n$-simplices are added to the respective Dowker complex (see Figure \ref{fig:source-filt}). In the former incoming edges determine added simplices, whereas in the latter, the outgoing edges add simplices.
\begin{figure}[h]
    \centering
    \includegraphics[width=10cm]{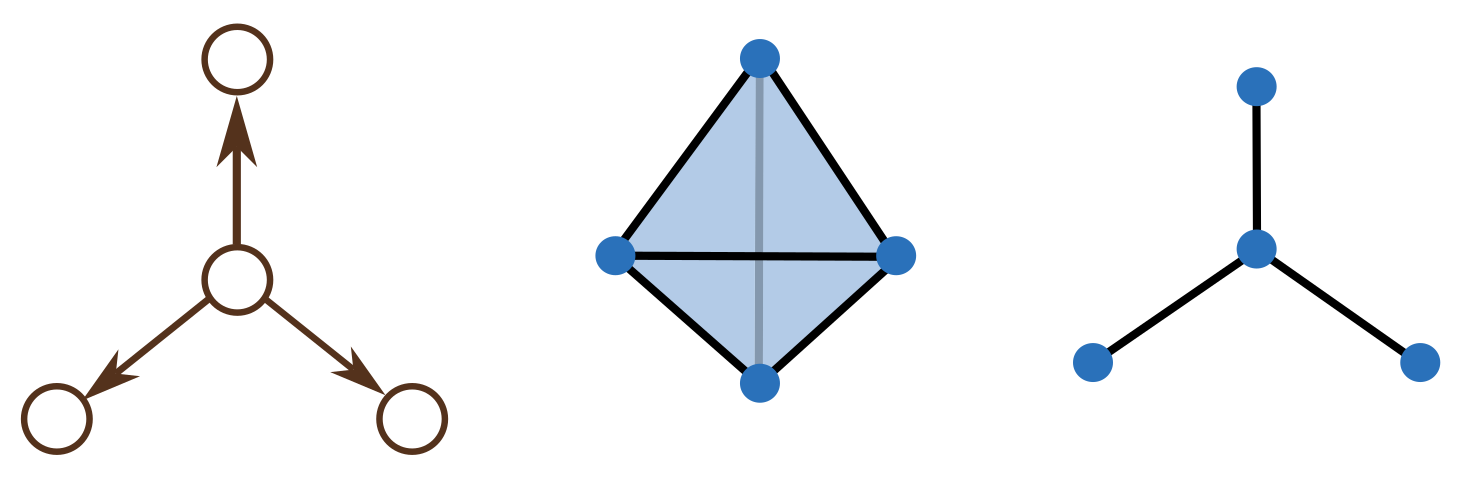}
    \caption{In the source filtration, a source node of degree three contributes a $2$-simplex to the Dowker complex. In the sink filtration, three $1$-simplices are added instead.}
    \label{fig:source-filt}
\end{figure}

We note that in our framework, as applied to a dynamical system, Theorem \ref{thm:dowker} guarantees that homologies in forward and backward time are equivalent. This matches our expectation in the characterization of steady states.

In the remainder of this paper we only consider the Dowker source complex and write $\FD_\delta(G) = \FD^\text{so}_\delta(G)$. Additionally, in Sections \ref{sec:dom_set_persistence} and \ref{sec:wedge_persistence} we will need to address the Dowker complex of the graph without the path completion as an intermediate step. For this we use the notation $\FD_\delta^\star(G) :=\FD(R_\delta(G))$ given a network $G$.

\section{Characterizing Dowker Persistence}

\subsection{Dominating Sets and Dowker Persistence}\label{sec:dom_set_persistence}

One of the fascinating properties of Dowker filtrations is that homological features can persist indefinitely. As we construct our Dowker complex on an underlying directed graph, we are interested in understanding the structural relationship between the two, and the representation of graph features in persistence. 

In this section we frame these invariant persistent features in terms of the dominating sets of this underlying graph, and explore the relationship between graph structure and the largest possible complex. We first show that any dominating set informs a bound on the persistence parameter at which a maximal complex is finalized. We then show that the intersections of the neighborhoods of dominating set elements fully describe the maximal complex, and in particular their nerve provides a reduced description.

We note that the subject of this section benefits from being considered without the path completion as it pertains more generally to the structure of the Dowker complex. For this reason we write our results in terms of the non path completed Dowker complex, $\FD^\star(G)$. The results remain applicable to the path completed Dowker complex for the same reason, and since $\FD^\star(G)=\FD^\star(P(G))$.

Given a weighted digraph $G$, we define the \textbf{maximal Dowker complex} of a Dowker filtration to be the Dowker complex $\FD_{\delta_\text{max}}(G)$ with a choice of $\delta_\text{max} >0$ such that $\FD_{\delta_\text{max}}(G)=\FD_{\delta}(G)$ for all $\delta >\delta_\text{max}$. We define the maximal non path completed complex, $\FD^\star_{\delta_\text{max}}(G)$, in the same  way. Note that the existence of this maximal Dowker complex follows from the assumption that the graph $G$ is finite. To give some intuition for this value, we find a straightforward upper bound of $\delta_\text{max}$ with the maximum edge weight.

\begin{prop}
    Let $G=(X,\omega)$ be a weighted directed graph. Let $D\subseteq X\times X$ be the set of ordered vertex pairs $(x,y)$ such that $\omega(x,y)<\infty$.
    Let $\delta_w = \max_{(v,w)\in D}\omega(v,w)$ and $\delta_\text{max}$ be the value at which the non path completed maximal Dowker source complex, $\FD^\star_{\delta_\text{max}}(G)$, occurs. Then $\delta_w \geq \delta_\text{max}.$
\end{prop}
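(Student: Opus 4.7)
The plan is to show directly that the relation $R_\delta(G)$ stabilizes for $\delta \geq \delta_w$, from which stabilization of the non path completed Dowker complex follows immediately since $\mathfrak{D}^\star_\delta(G)$ is determined entirely by $R_\delta(G)$.

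First I would unpack what it means for a simplex $\sigma \subseteq X$ to appear in $\mathfrak{D}^\star_\delta(G)$: there must exist some $y \in X$ with $\omega(x,y) \leq \delta$ (hence $(x,y) \in D$) for every $x \in \sigma$. This immediately implies that no simplex can ever rely on a pair $(x,y) \notin D$, because $\omega(x,y) = \infty > \delta$ for any finite parameter value. So the only pairs that can ever ``turn on'' in $R_\delta(G)$ are those in $D$.

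Next I would argue that every pair in $D$ is turned on by the time we reach $\delta = \delta_w$. This is the definition of $\delta_w$: for any $(v,w) \in D$ we have $\omega(v,w) \leq \delta_w$, so $R_{\delta_w}(G)(v,w) = 1$. Combined with the previous step, this shows $R_\delta(G) = R_{\delta_w}(G)$ for every $\delta \geq \delta_w$, and therefore $\mathfrak{D}^\star_\delta(G) = \mathfrak{D}^\star_{\delta_w}(G)$ for every $\delta \geq \delta_w$.

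Finally I would invoke the definition of $\delta_\text{max}$ as the smallest parameter at which $\mathfrak{D}^\star$ is finalized. Since $\mathfrak{D}^\star_{\delta_w}(G) = \mathfrak{D}^\star_\delta(G)$ for all $\delta > \delta_w$, the value $\delta_w$ already witnesses stabilization, so $\delta_\text{max} \leq \delta_w$ as claimed. There is no real obstacle here; the only point requiring a bit of care is the bookkeeping for infinite-weight pairs, which is handled by restricting attention to $D$ and noting that no finite $\delta$ ever adds them to the relation.
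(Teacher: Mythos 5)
Your proposal is correct and follows essentially the same route as the paper: both hinge on the single observation that any pair with weight at most some finite $\delta$ lies in $D$ and hence has weight at most $\delta_w$, so nothing new can enter past $\delta_w$. You phrase this as stabilization of the relation $R_\delta(G)$ while the paper phrases it as a double inclusion of complexes via the witness vertex of each simplex, but these are the same argument.
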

\begin{proof}
    Let $\delta_w$ be defined as above, and $\delta'>\delta_w$. Naturally, $\FD^\star_{\delta_w}(G)\subseteq \FD^\star_{\delta'}(G)$. We show the other inclusion. Let $\sigma\subseteq X$ be a simplex of $\FD^\star_{\delta'}(G)$. It follows that there exists $y\in X$ such that $(y,x)\in R_{\delta'}(G)$ for all $x\in \sigma$. Thus $\omega(y,x)\leq \delta'$ for all $x\in \sigma$. It follows that $\omega(y,x)\leq \delta'$ for all $x\in \sigma$. Thus $\omega(y,x)< \infty$ for all $x\in \sigma$, implying that $\omega(y,x)\leq \delta_w$ for all $x\in \sigma$. Hence $\sigma$ is a simplex of $\FD^\star_{\delta_w}(G)$.
    This yields the desired inclusion.
\end{proof}


In order to proceed with our analysis we introduce the graph theoretic terminology of a directed dominating set. This is a set of vertices which `see' all other vertices of the graph with finite weight. 
\begin{definition}
    Let $G=(X,\omega)$ be a weighted directed graph. 
    Define a \textbf{source dominating set of $G$} as a subset $K\subseteq X$ such that for all $x\in X$ there exists $k\in K$ such that $\omega(k,x)<\infty$.
    We call a dominating set minimal if no strict subset is also a dominating set.
\end{definition}

We show the correspondence between source dominating sets and the source complexes which are the focus of our analysis. As source dominating sets see all the vertices in the graph, and elements of the source complex are induced by common in-neighbors, it stands to reason that the simplices in the maximal complex can be induced by the dominating set. We can formalize this intuition in the following proposition.


\begin{prop}\label{so_dom_set_simplex}
    Let $G=(X,\omega)$ be a weighted directed graph, and $K$ a source dominating set. Let $\sigma$ be a simplex of $\FD^\star_{\delta_\text{max}}(G)$. Then there exists $k\in K$ and $\delta$ such that $(k,x)\in R_\delta(G)$ for all $x\in \sigma$. In other words there exists a vertex $k$ in the source dominating set which can generate $\sigma$ in $\FD^\star_{\delta}(G)$.
\end{prop}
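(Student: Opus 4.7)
The plan is to unpack the definition of a Dowker source simplex to extract a witness vertex, and then use the source dominating property to replace that witness by some $k \in K$, with a two-edge concatenation doing the work in the path-completed setting.

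First, by definition of $\FD^\star_{\delta_\text{max}}(G)$, every simplex $\sigma$ has a witness $y \in X$ with $\omega(y,x) < \infty$ for each $x \in \sigma$. The easy case is $y \in K$: take $k = y$ and $\delta = \max_{x \in \sigma} \omega(y,x)$, which is finite since $\sigma$ is a finite set of vertices and each individual edge has finite weight, so $(k,x) \in R_\delta(G)$ for all $x \in \sigma$ automatically.

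Otherwise $y \notin K$, and I apply the source dominating property of $K$ to the vertex $y$ itself: there exists $k \in K$ with $\omega(k,y) < \infty$. Concatenating the edge $k \to y$ with each edge $y \to x$ for $x \in \sigma$ gives a length-two path of finite weight $\omega(k,y) + \omega(y,x)$. Because the shortest-path function on $P(G)$ is a Lawvere metric (satisfying the directed triangle inequality), one obtains $\omega(k,x) \leq \omega(k,y) + \omega(y,x) < \infty$ for every $x \in \sigma$. Setting $\delta = \max_{x \in \sigma} \omega(k,x)$ finishes the argument, since $(k,x) \in R_\delta(G)$ for every $x \in \sigma$ by construction.

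The main obstacle is the bridge between a two-step reachability argument and the single-edge relation $R_\delta$: the dominating property supplies only an edge $k \to y$, not a direct edge $k \to x$. This is handled by working in the path completion, where the Lawvere triangle inequality is available, and then transporting the conclusion back via the identification $\FD^\star(G) = \FD^\star(P(G))$ recorded earlier in the section. With that identification in hand, the argument above promotes any witness $y$ of $\sigma$ to a dominating-set witness $k$ at a possibly larger but still finite threshold $\delta$.
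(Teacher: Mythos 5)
Your overall strategy is the same as the paper's: extract a witness $y$ for $\sigma$ from the definition of the Dowker source complex, then use the dominating property to replace $y$ by some $k\in K$, splitting into the cases $y\in K$ and $y\notin K$. To your credit, you explicitly flag the genuine difficulty --- domination only supplies an edge $k\to y$, not edges $k\to x$ --- a step the paper's own proof elides by simply asserting that ``all edges from $k$ to $x\in\sigma$ also have finite weight.''

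However, your patch does not close that gap for the statement as written. The conclusion $(k,x)\in R_\delta(G)$ is a condition on the weight function $\omega$ of $G$ itself, i.e.\ on single edges; your triangle-inequality argument only establishes $\omega_{P(G)}(k,x)\le\omega(k,y)+\omega(y,x)<\infty$, i.e.\ $(k,x)\in R_\delta(P(G))$. The identification $\FD^\star(G)=\FD^\star(P(G))$ is (at best) an equality of simplicial complexes; it cannot transport a statement about a particular vertex pair of $P(G)$ back to a statement about the edge $(k,x)$ in $G$. Indeed, without an extra hypothesis the proposition fails in the raw graph: take vertices $\{h,k_1,k_2,x_1,x_2\}$ with finite edges $h\to x_1$, $h\to x_2$, $k_1\to h$, $k_1\to x_1$, $k_1\to k_2$, $k_2\to x_2$ (plus zero-weight self-loops). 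Then $K=\{k_1,k_2\}$ is a source dominating set of $G$, and $\{x_1,x_2\}$ is a simplex of $\FD^\star_{\delta_{\text{max}}}(G)$ witnessed by $h$, yet no single $k\in K$ has finite direct edges to both $x_1$ and $x_2$. The clean fixes are either to assume $\omega$ already satisfies the directed triangle inequality (i.e.\ $G=P(G)$), in which case your two-edge concatenation genuinely produces an edge of $G$ and your argument is complete, or to read the conclusion (and the neighborhoods $N(v)$ in the ensuing corollaries) in terms of $P(G)$ throughout. As it stands, your proof establishes the $P(G)$ version of the claim and then asserts, rather than proves, the passage back to $G$; the paper's proof has the same hole, only less visibly.
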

\begin{proof}
Let $\sigma$ be a simplex of $\FD^\star_{\delta_\text{max}}(G)$ and $K$ be a source dominating set of $G$. Since $\sigma$ is a simplex of $\FD^\star_{\delta_\text{max}}(G)$, it follows that there exists vertex $h$ such that $\omega(h,x)<\delta_{\text{max}}$ for all $x\in \sigma$. However, since $h\in X$ it follows that either an element of the dominating set, $k\in K$, sees it with an edge of finite weight, or $h=k\in K$. In either case it follows that all edges from $k$ to $x\in \sigma$ also have finite weight. Thus $\delta^* = \max_{x\in \sigma} \omega(k,x)$ is finite and $(k,x) \in R_{\delta^*}(G)$ for all $x\in \sigma$.
\end{proof}

Now let us consider the local scale of the complex. Let $N(v)=\{x\in G | \omega(v,x)<\infty\}$ be the out neighborhood of $v$ in $G$. We assume the graph has trivial self loops so that $v\in N(v)$. We also let $[N(v)]$ denote the simplex with vertices $N(v)$. With this new notation we can reframe Proposition \ref{so_dom_set_simplex} as the union of fully connected neighborhoods of the elements of a dominating set. Let Cl$(\Delta)$ denote the simplicial closure of a set $\Delta$.

\begin{figure}
    \centering
    \includegraphics[
    trim= 0 7cm 0 0,
    width=0.85\linewidth]{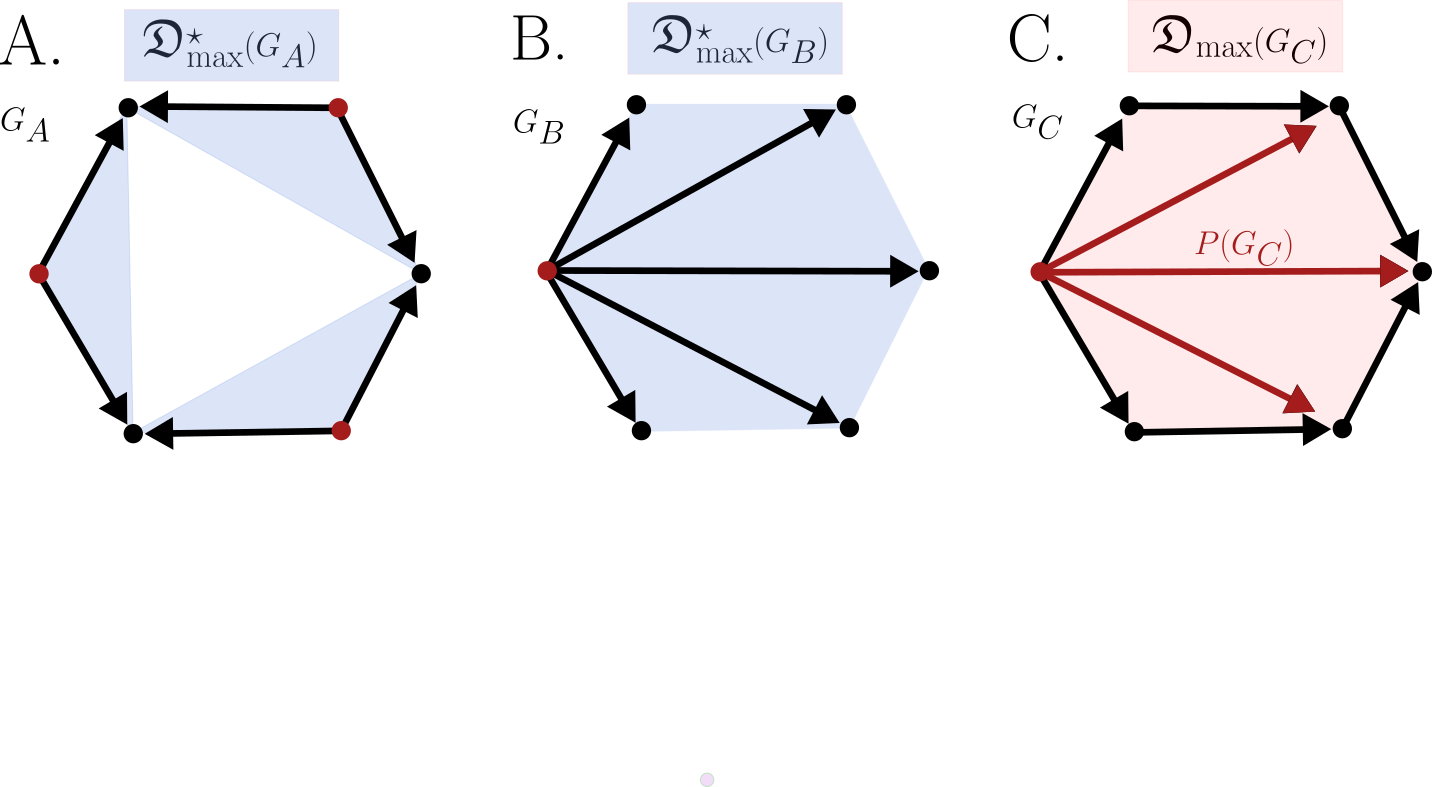}
    \caption{Above we have $3$ example cases. Each base graph has $6$ vertices. The choice of edge orientation determines the vertices which can form a minimal source dominating set in the path completion. The red vertices form minimal dominating sets and are in these cases unique. In \textbf{A} and \textbf{B} the blue shaded regions denote the simplices of the maximal Dowker complex without path completion. This allows us to compare and see that \textbf{A} has nontrivial first homology, allowed by its minimal dominating set of size $3$, whereas \textbf{B} has trivial homology with its minimal dominating set of size $1$. In \textbf{C} the red shaded region indicates the single fully connected maximal simplex of the maximal Dowker complex with path completion. The edges imparted by the path completion are in red as well.}
    \label{fig:dom_set_examples}
\end{figure}

\begin{cor}\label{cor:cover}
    Let $G$ be a network. For any dominating set $K = \{v_0, \dots, v_k\}$ we have that $\FD^\star_{\delta_\text{max}}(G) = \text{Cl}\left([N(v_0)] \cup \dotsc \cup [N(v_k)]\right)$. 
\end{cor}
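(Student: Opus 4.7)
The plan is to prove the set equality by mutual inclusion, leaning on Proposition \ref{so_dom_set_simplex} for the forward direction and a direct construction for the reverse.

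For the inclusion $\FD^\star_{\delta_\text{max}}(G) \subseteq \text{Cl}\left([N(v_0)] \cup \dotsc \cup [N(v_k)]\right)$, I would take an arbitrary simplex $\sigma \in \FD^\star_{\delta_\text{max}}(G)$ and apply Proposition \ref{so_dom_set_simplex} to obtain some $v_i \in K$ and some finite $\delta$ with $(v_i, x) \in R_\delta(G)$ for all $x \in \sigma$. Unpacking the relation, this gives $\omega(v_i, x) \leq \delta < \infty$, so $x \in N(v_i)$ for every $x \in \sigma$. Hence $\sigma \subseteq N(v_i)$, meaning $\sigma$ is a face of the full simplex $[N(v_i)]$ and thus lies in the simplicial closure of the union.

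For the reverse inclusion, I would take a simplex $\sigma$ in $\text{Cl}\left([N(v_0)] \cup \dotsc \cup [N(v_k)]\right)$. By definition of the closure, $\sigma$ is a face of some $[N(v_i)]$, so $\sigma \subseteq N(v_i)$, which means $\omega(v_i, x) < \infty$ for every $x \in \sigma$. Setting $\delta^\star = \max_{x \in \sigma}\omega(v_i,x)$ (finite since $\sigma$ is finite), we get $(v_i, x) \in R_{\delta^\star}(G)$ for all $x \in \sigma$, so $\sigma \in \FD^\star_{\delta^\star}(G) \subseteq \FD^\star_{\delta_\text{max}}(G)$ by the definition of the maximal complex.

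I do not anticipate any real obstacle here: Proposition \ref{so_dom_set_simplex} already packages the only nontrivial content (that the dominating set can witness every maximal simplex), and the rest is a routine unwinding of the definitions of $R_\delta$, $N(v)$, and the simplicial closure. The only small thing to be careful about is that $N(v)$ is defined via the non-path-completed graph $G$, matching the $\FD^\star$ notation used throughout the section, so no appeal to path completion is needed and both inclusions work at the level of $G$ directly.
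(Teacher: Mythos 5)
Your proof is correct and matches the paper's intent: the corollary is stated there without proof as a direct reframing of Proposition \ref{so_dom_set_simplex}, and your forward inclusion is exactly that reframing while your reverse inclusion is the routine unwinding of $N(v)$, $R_\delta$, and the closure (using that $\delta^\star$ is finite for a finite simplex and that the filtration is monotone up to $\delta_{\text{max}}$). No gaps.
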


In this way, the dominating sets capture the `reach' of the source complex placed upon the graph. In particular, it carves out a minimal subgraph of the original with the same end behavior.
Given a graph $G=(V,\omega)$ and a source dominating set $K$, define the subgraph of $G$ induced by $K$ to be $G^K(V,\omega')$ where
\begin{equation*}
    \omega'(x,y) = 
    \begin{cases}
            \omega(x,y) & \hspace{10pt}\text{if $x\in K$}\\
            \infty &    \hspace{10pt}\text{otherwise.}
    \end{cases}
\end{equation*}

We can formalize that this subgraph captures the maximal behavior of the source complex in the following claim.

\begin{prop}\label{dom_graph_equiv}
    Let $G=(X,\omega)$ be a weighted directed graph and $K$ a dominating set. Let $H = G^K$ be the subgraph induced by $K$. Then $\FD^\star_{\delta_\text{max}}(G)=\FD^\star_{\delta_\text{max}}(H)$
\end{prop}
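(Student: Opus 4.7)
The plan is to prove the two complexes are equal by showing mutual inclusion as stabilized (maximal) simplicial complexes, noting that ``$\delta_\text{max}$'' may differ on each side but we only care about the limiting complex in either case. The key tool will be Proposition \ref{so_dom_set_simplex}, which tells us that every simplex in the maximal complex of $G$ can be generated by a vertex drawn from the dominating set $K$.

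First I would observe the easy inclusion $\FD^\star_{\delta_\text{max}}(H) \subseteq \FD^\star_{\delta_\text{max}}(G)$. This follows because the induced weight function satisfies $\omega'(x,y) \geq \omega(x,y)$ for every pair $(x,y) \in X \times X$ (they agree when $x \in K$ and $\omega'$ is infinite otherwise). Consequently $R_\delta(H)(y,x) = 1$ forces $R_\delta(G)(y,x) = 1$, so any generator $y$ witnessing a simplex $\sigma \in \FD^\star_\delta(H)$ also witnesses $\sigma \in \FD^\star_\delta(G)$ at the same filtration value. Passing to the maximal stage gives the desired containment.

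Next I would tackle the reverse inclusion $\FD^\star_{\delta_\text{max}}(G) \subseteq \FD^\star_{\delta_\text{max}}(H)$. Take an arbitrary simplex $\sigma \in \FD^\star_{\delta_\text{max}}(G)$. By Proposition \ref{so_dom_set_simplex}, there exist $k \in K$ and a finite threshold $\delta^*$ with $(k,x) \in R_{\delta^*}(G)$, i.e., $\omega(k,x) \leq \delta^*$ for every $x \in \sigma$. Because $k \in K$, the induced weight function satisfies $\omega'(k,x) = \omega(k,x) \leq \delta^*$ for each $x \in \sigma$, so $k$ witnesses $\sigma$ as a simplex of $\FD^\star_{\delta^*}(H)$. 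Since $\delta^*$ is finite, this simplex is present in the maximal Dowker complex of $H$, giving the reverse containment.

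The whole argument is essentially bookkeeping once Proposition \ref{so_dom_set_simplex} is available, so I do not anticipate a real obstacle. The one subtlety I would flag is notational: the symbol $\delta_\text{max}$ refers to a stabilization value that in principle depends on the graph, so I would either remark explicitly that one may choose a single $\delta$ large enough to stabilize both filtrations (e.g., the max edge weight of $G$, which by the earlier proposition bounds $\delta_\text{max}$ for $G$ and clearly does so for $H$ since $\omega' \geq \omega$ on the finite pairs), or phrase the equality as an equality of the eventually-constant complexes. Either framing makes the above two-way inclusion argument a complete proof.
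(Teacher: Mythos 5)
Your proof is correct and follows essentially the same route as the paper's: the easy inclusion from $H$'s edges being a subset of $G$'s, and the reverse inclusion via Proposition \ref{so_dom_set_simplex}. If anything you are slightly more careful than the paper, since you spell out why the witness $k\in K$ furnished by that proposition also works in $H$ (because $\omega'(k,\cdot)=\omega(k,\cdot)$ for $k\in K$) and you flag the minor point that $\delta_\text{max}$ is graph-dependent; both of these the paper leaves implicit.
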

\begin{proof}
    Since the edge set of $H$ is a subset of the edge set of $G$, it follows that for all $\delta$, any simplex of $\FD^\star_\delta(H)$ must also be contained in $\FD^\star_\delta(G)$. Thus $\FD^\star_{\delta_\text{max}}(H)\subseteq \FD^\star_{\delta_\text{max}}(G)$

    Let $\sigma\in \FD^\star_{\delta_\text{max}}(G)$. Then by Proposition \ref{so_dom_set_simplex} it follows that $\sigma\in\FD^\star_{\delta}(H)$ for some $\delta$ value. Thus $\sigma\in \FD^\star_{\delta_\text{max}}(H)$. Hence $\FD^\star_{\delta_\text{max}}(H)\supseteq \FD^\star_{\delta_\text{max}}(G)$.
\end{proof}

Further, because the finite weight edges of the dominating set induced subgraph are a subset of the edges of the full graph, it follows the that edge weight bound of the former  is less than or equal to the edge weight bound of the latter.

\begin{prop}
    Let $G=(X,\omega)$ be a weighted directed graph, and $K$ a dominating set. Let $R\subseteq X\times X$ be the set of ordered vertex pairs $(x,y)$ such that $x\in K$ and $\omega(x,y)<\infty$.
    Let $\delta_r = \max_{(v,w)\in R}\omega(v,w)$ and $\delta_\text{max}$ be the value at which the maximal Dowker complex occurs. Then $\delta_r$ is an upper bound on $\delta_{\text{max}}$ and a lower bound on the edge weight bound of $G$.
\end{prop}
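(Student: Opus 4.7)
The plan is to prove the two bounds separately and observe that each is essentially a corollary of previously established machinery, so the argument should be short.

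For the upper bound $\delta_r \geq \delta_{\text{max}}$, I would appeal to Proposition~\ref{dom_graph_equiv}, which identifies $\FD^\star_{\delta_\text{max}}(G)$ with $\FD^\star_{\delta_\text{max}}(H)$ for $H = G^K$. By construction, the only edges of finite weight in $H$ are those emanating from a vertex in $K$, so the set $R$ is exactly the set of finite-weight ordered pairs in $H$. The earlier proposition bounding the persistence parameter of the maximal complex by the largest finite edge weight, applied to $H$, then yields $\delta_r \geq \delta_{\text{max}}(H) = \delta_{\text{max}}(G)$. A direct argument is also available: given any simplex $\sigma \in \FD^\star_{\delta_\text{max}}(G)$, Proposition~\ref{so_dom_set_simplex} produces a $k \in K$ with $(k,x) \in R_{\delta^*}(G)$ for all $x \in \sigma$, where $\delta^* = \max_{x \in \sigma} \omega(k,x)$; since each pair $(k,x)$ lies in $R$, we have $\delta^* \leq \delta_r$, so $\sigma$ is already present in $\FD^\star_{\delta_r}(G)$.

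For the lower bound $\delta_r \leq \delta_w$, I would simply observe the containment $R \subseteq D$: every pair $(x,y) \in R$ satisfies $\omega(x,y) < \infty$, which is exactly the defining property of $D$. Taking maxima preserves the inequality, giving $\delta_r \leq \delta_w$ immediately.

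There is no serious obstacle here; the statement is really a packaging of Propositions~\ref{so_dom_set_simplex} and~\ref{dom_graph_equiv} together with the earlier edge-weight bound. If anything, the only thing to be careful about is confirming that when restricting attention to $H = G^K$, the maximum finite edge weight of $H$ really does equal $\delta_r$ as defined — but this follows directly from unfolding the definition of $G^K$, since edges of $H$ are exactly pairs $(k,y)$ with $k \in K$ and $\omega(k,y) < \infty$.
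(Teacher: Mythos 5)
Your proposal is correct and takes essentially the same approach as the paper: your ``direct argument'' for $\delta_r \geq \delta_{\text{max}}$ via Proposition~\ref{so_dom_set_simplex} is exactly the paper's proof, and your observation $R \subseteq D$ is the paper's remark that the finite-weight edges of $G^K$ are a subset of those of $G$. The only caveat concerns your alternative route: Proposition~\ref{dom_graph_equiv} gives equality of the maximal \emph{complexes}, not of the parameter values $\delta_{\text{max}}(H)$ and $\delta_{\text{max}}(G)$, though the inequality $\delta_{\text{max}}(H) \geq \delta_{\text{max}}(G)$ that actually follows (since $H$'s simplices appear no earlier than $G$'s) is the direction you need.
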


\begin{proof}
    We note that $\delta_r$ as defined is also the edge weight bound of the the subgraph induced by $K$, which we denote by $H$.
    
    Let $\delta_e$ be the edge weight bound of $G$. The inequality $\delta_r\leq \delta_e$ follows from the fact that the finite weight edges of $H$ are a subset of those in $G$.

    Let $\sigma\in \FD^\star_{\delta_\text{max}}(G)$. By Proposition \ref{so_dom_set_simplex} there exists $v\in K$ and $\delta$ such that $(v,x)\in R_\delta$ for all $x\in \sigma$. Since $\delta_r$ is at least a big as $\max_{x\in \sigma} \omega'(v,x)$ by construction, it follows $\delta\leq \delta_r$. Thus $\sigma\in \FD^\star_{\delta_r}(G)$. It follows that $\FD^\star_{\delta_\text{max}}(G) \subseteq \FD^\star_{\delta_r}(G)$. Thus $\delta_r\geq \delta_{\text{max}}$.
\end{proof}

This result tells us that any dominating set induces its own edge weight bound on the maximal Dowker complex. This further illustrates the strong relationship between the dominating set and the maximal Dowker complex. This also points to another key feature; the vertices seen by an element of the dominating set become a fully connected simplex in the maximal complex. Further, the union of these large neighborhood simplices from the dominating set is the maximal complex. This suggests that any topological features in the maximal complex are the result of the interactions between these large simplices induced by the dominating set elements. In other words, their nerve is homotopic to the maximal complex. Let Nerv$(A)$ denote the nerve of the set $A$.

\begin{prop}\label{prop:dominantnerve}
    Let $G$ be a weighted graph, and $K$ one of its minimal dominating sets. Then $\FD^\star_{\delta_\text{max}}(G)$ is homotopy equivalent to the \text{Nerv}$\left(\{[N(v_1)], \dotsc , [N(v_k)]\}\right)$.
\end{prop}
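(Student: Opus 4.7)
My plan is to apply the nerve theorem (stated in Section~2.1) directly to the cover of the maximal complex that is furnished by Corollary~\ref{cor:cover}. That corollary already writes $\FD^\star_{\delta_\text{max}}(G)$ as a finite union of closed simplices $\text{Cl}([N(v_i)])$, so setting $U_i := |\text{Cl}([N(v_i)])|$ gives a closed cover of $|\FD^\star_{\delta_\text{max}}(G)|$ by standard simplices, each of which is contractible.

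Next I would check the hypothesis of the nerve theorem on all nonempty intersections. The key (standard) observation is that intersections of closed simplices inside a simplicial complex are themselves closed simplices, spanned by the intersection of the vertex sets. Thus $U_{i_1} \cap \cdots \cap U_{i_j}$ coincides with the realization of the closed simplex on $N(v_{i_1}) \cap \cdots \cap N(v_{i_j})$. This vertex set is a subset of $N(v_{i_1})$, which is already a simplex of the maximal complex (witnessed by the common source $v_{i_1}$), so the intersection really does sit in $\FD^\star_{\delta_\text{max}}(G)$ as a face, and is therefore a standard closed simplex, hence contractible when nonempty. I would also note that by definition the indexing set $\{i_1,\dots,i_j\}$ belongs to $\text{Nerv}(\{[N(v_1)], \dots, [N(v_k)]\})$ exactly when this vertex intersection is nonempty, so the combinatorial nerve and the one arising from the cover $\{U_i\}$ agree.

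With these hypotheses in place, the nerve theorem yields the homotopy equivalence $|\FD^\star_{\delta_\text{max}}(G)| \simeq \text{Nerv}(\{[N(v_1)], \dots, [N(v_k)]\})$, which is the desired conclusion. The only real obstacle is the verification that pairwise (and higher) intersections of the cover elements are simplices of the maximal complex; once that is established, minimality of $K$ plays no essential role in the argument, beyond ensuring that the resulting nerve is as compact a model as possible.
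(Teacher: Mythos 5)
Your proposal is correct and follows essentially the same route as the paper: both cover $\FD^\star_{\delta_\text{max}}(G)$ by the closed simplices $\text{Cl}([N(v_i)])$ via Corollary~\ref{cor:cover} and apply the nerve theorem, with the contractibility of intersections justified by the fact that an intersection of closed simplices is the closed simplex on the intersected vertex sets (the paper phrases this as the intersection being ``entirely seen by some vertex and thus fully connected''). Your observation that minimality of $K$ is not actually used is also consistent with the paper's argument.
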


\begin{proof}
    Let $K=\{v_1,\dotsc, v_k\}$ be a minimal dominating set of $G$, with indexing set $I=\{1,\dotsc, k\}$. By Corollary \ref{cor:cover} it follows $\FD^\star_{\delta_\text{max}}(G) = \text{Cl}\left([N(v_1)] \cup \dotsc \cup [N(v_k)]\right)$. Let $N$ be the nerve of $\{|N(v_1)|, \dotsc , |N(v_k)|\}$, that is the set of all $J\subseteq I$ such that $\cap_{i\in J}|N(v_i)| \neq \emptyset$, where $|\cdot|$ denotes the geometric realization of the simplex. Notice that $N$ is a simplicial complex because it is closed under containment. By the nerve theorem, if all $J\subseteq I$ have $\cap_{i\in J}|N(v_i)|$ as empty or contractible, then $N$ is homotopy equivalent to the union $\FD^\star_{\delta_\text{max}}(G) = [N(v_1)] \cup \dotsc \cup [N(v_k)]$. By the construction of the maximal Dowker complex, we know that any intersection of neighborhoods is entirely seen by some vertex (in its neighborhood) and thus gets fully connected. Thus the corresponding intersection of simplicial complexes is contractible and we are done.  
\end{proof}

We might be interested in properties of the underlying graph which guarantee that the maximal Dowker complex is contractible. Proposition \ref{prop:dominantnerve} allows us to frame this in terms of the interactions of dominating set neighborhoods. As an example, if the intersection of all the neighborhoods is nonempty, then the aforementioned nerve is fully connected, implying the complex is contractible.

\begin{cor}
    Let $G$ be a network, and $K=\{v_0,\dotsc,v_k\}$ a minimal dominating set. If $N(v_1) \cap \dotsc \cap N(v_k)\not=\emptyset$, then the maximal Dowker complex $\FD^\star_{\delta_\text{max}}(G)$ has trivial reduced homology.
\end{cor}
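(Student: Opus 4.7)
The plan is to reduce the problem, via Proposition \ref{prop:dominantnerve}, to showing that a certain nerve is contractible, and then to observe that the stated hypothesis forces the nerve to be a full simplex on the dominating set.

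First, I would invoke Proposition \ref{prop:dominantnerve} to obtain a homotopy equivalence $\FD^\star_{\delta_\text{max}}(G) \simeq \text{Nerv}(\{[N(v_0)], \dotsc, [N(v_k)]\})$. Since reduced homology is a homotopy invariant, it suffices to show that this nerve has trivial reduced homology, which in turn will follow if it is a full simplex (hence contractible).

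Next, let $I = \{0, 1, \dotsc, k\}$ index the dominating set. A subset $J \subseteq I$ is a face of the nerve precisely when $\bigcap_{j \in J} |N(v_j)| \neq \emptyset$. Under the hypothesis $\bigcap_{i} N(v_i) \neq \emptyset$ (reading the statement's $N(v_1)\cap \dotsc \cap N(v_k)$ as the intersection over the entire dominating set $K$), monotonicity of intersection gives $\bigcap_{j \in J} |N(v_j)| \supseteq \bigcap_{i \in I} |N(v_i)| \neq \emptyset$ for every nonempty $J \subseteq I$. Therefore every subset of $I$ is a simplex of the nerve, so the nerve coincides with the full $k$-simplex $\Delta^k$ on vertex set $K$. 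Since $\Delta^k$ is contractible, combining this with the homotopy equivalence above yields the claim.

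I do not anticipate a substantive obstacle: once Proposition \ref{prop:dominantnerve} is in hand, the corollary amounts to the observation that a nerve whose top-dimensional intersection is nonempty must be a full simplex. The only point requiring mild care is the apparent indexing mismatch in the statement (the dominating set is written $\{v_0, \dotsc, v_k\}$ while the hypothesis lists $N(v_1) \cap \dotsc \cap N(v_k)$), which I read as intending the intersection over all elements of $K$; under any other reading the argument still goes through, as it only uses that some collection of neighborhoods covering all of $K$ has a common element.
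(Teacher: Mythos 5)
Your proof is correct and follows exactly the route the paper intends: apply Proposition \ref{prop:dominantnerve} to replace the maximal complex by the nerve of the dominating neighborhoods, then note that a nonempty common intersection makes every subcollection intersect, so the nerve is a full simplex and hence contractible. Your reading of the indexing slip ($v_0$ versus $v_1$) as the intersection over all of $K$ is the intended one, and as you observe the argument is insensitive to it.
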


When described in terms of a nerve, the size of the dominating set bounds the homological complexity of the maximal complex.

\begin{prop}\label{prop:Dominating_set_reduction}
    Let $G$ be a network, and $K$ one of its minimal dominating sets. If $|K|\leq k$ then $\FD(R_\text{max})$ has trivial $k-1$ reduced homology (and greater) for $k \geq 1$.
\end{prop}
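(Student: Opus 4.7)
The plan is to reduce the problem to a homology computation on the nerve via Proposition \ref{prop:dominantnerve}, which gives a homotopy equivalence between $\FD^\star_{\delta_\text{max}}(G)$ and the nerve $N = \text{Nerv}(\{[N(v_1)], \dotsc, [N(v_{|K|})]\})$. Since homotopy equivalence preserves homology in all dimensions, it suffices to show that $\tilde H_i(N) = 0$ for all $i \geq k-1$.

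By construction, the vertex set of $N$ is indexed by the dominating set $K$, so it has at most $|K| \leq k$ vertices and therefore sits as a subcomplex of the full simplex $\Delta^{k-1}$ on $k$ vertices. From this observation the desired vanishing follows by a short combinatorial dimension count: for any $i \geq k$ there are no $i$-simplices available on at most $k$ vertices, so $C_i(N) = 0$ and hence $H_i(N) = 0$ trivially.

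For the critical dimension $i = k-1$, I would split into two cases. If $N$ contains a $(k-1)$-simplex, then since the only $(k-1)$-simplex on $k$ vertices uses all $k$ of them, $N$ must equal the full simplex $\Delta^{k-1}$, which is contractible and has $\tilde H_{k-1} = 0$. If $N$ contains no $(k-1)$-simplex, then $C_{k-1}(N) = 0$ and $H_{k-1}(N) = 0$ for trivial reasons. Either way, $\tilde H_{k-1}(N) = 0$.

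The edge case $k = 1$ warrants a brief separate remark using Corollary \ref{cor:cover}: a dominating set of size one yields $\FD^\star_{\delta_\text{max}}(G) = [N(v_1)]$, a single simplex, hence contractible, so $\tilde H_0 = 0$. I do not anticipate any genuine obstacle beyond invoking Proposition \ref{prop:dominantnerve}; the entire argument is a short dimension count combined with the nerve homotopy equivalence already established earlier in the section.
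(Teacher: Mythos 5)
Your proposal is correct and follows essentially the same route as the paper: invoke Proposition \ref{prop:dominantnerve} to replace the maximal complex by the nerve on at most $k$ vertices, then kill all homology in degrees $\geq k-1$ by a dimension count (your explicit case split at degree $k-1$ — either the full simplex is present and the nerve is contractible, or there are no $(k-1)$-chains at all — is just a slightly more careful rendering of the paper's one-line count). No substantive difference.
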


\begin{proof}
    By Proposition \ref{prop:dominantnerve}, we have that $\FD^\star_{\delta_\text{max}}(G)$ is homotopic to the nerve of $\{|N(v_1)|, \dotsc , |N(v_k)|\}$ which we will call $N$. This is a simplicial complex on a base set of $k$ elements. Since an $n$ simplex has $n+1$ vertices, it follows that $N$ with $k$ vertices cannot contain the boundary of a simplex of dimension $k$ or higher. Thus $N$ must have trivial homology for dimension $k-1$ and greater. Since $N$ is homotopically equivalent to $\FD^\star_{\delta_\text{max}}(G)$ it follows that $\FD^\star_{\delta_\text{max}}(G)$ shares these trivial homology classes.
\end{proof}

This result relates Dowker homology to minimum dominating set size, known as the directed dominating number in the literature, \cite{CARO_Dominating_Number}. Dominating sets have a rich relationship with the characterization of graphs and their structural connectivity, so we believe it important that the Dowker complex picks up on this and how. It ties a binary graph theoretical understanding of an `important subset' to our model of weighted persistence. 

Let us again consider the Dowker persistence with the path completion. On $P(G)$ a dominating set describes a set of vertices such that in $G$ the entire vertex set is a directed finite length path away. For this reason, given a strongly connected graph $G$, the completion $P(G)$ has a dominating set of size $1$. From Proposition \ref{prop:Dominating_set_reduction} we find that in this case the maximal homology is contractible. This shows how our dominating set analysis of the maximal complexes reveals fewer connected regions coming from the presence of infinite distances in our Lawvere metric $d$.

We believe the additional information captured in the persistence adds nuance to the project of typing high dimensional time series data. In parallel, perhaps this relationship suggests that Dowker persistence could aid in computing or approximating dominating sets, as the problem is in general NP-complete.

\subsection{Persistence of Cycle Graphs}\label{sec:cycle_persistence}

Our framework approaches typing dynamical point cloud data by encoding temporal information in a binned digraph, and evaluating topological features in the accompanying Dowker filtration. We are particularly interested in the ability of this method to distinguish periodic behavior in the dynamical trajectory from cycles formed simply due to point cloud proximity. In the following section we address this question analytically. We start by showing that every nontrivial element of the first homology corresponds with a cycle in the underlying graph, directly relating topological features in the filtration to the measured data. We then characterize the persistence of cycle graphs with any edge orientation. This gives insight into how the included directed information informs the persistence of topological features.

To begin our analysis, we need to get a better sense of what the first homology class tells us about the underlying graph. In particular, the Dowker complex on the path completion of a graph has $1-$simplices which correspond to paths ignoring the orientation in the underlying graph. Therefore all nontrivial elements of the first homology correspond to graph cycles in the underlying graph. We formalize this in the following result. 

\begin{prop}\label{prop:hom_to_bdry}
    Consider $\FD_\delta(G)$ for some network $G=(V,\omega)$ and $\delta$. Let $\alpha$ be a non-bounding $1-$cycle of the complex. There exists $\eta=\sum_{i}e_i\eta_i$ for which $\eta_i=[x,y]$ implies $\omega_{G}(x,y)\leq \delta$ or $\omega_G(y,x)\leq \delta$ and $[\eta]=[\alpha]$ upon passing to homology.
\end{prop}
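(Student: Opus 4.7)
The plan is to show that every 1-simplex $[x,y] \in \FD_\delta(G)$ is homologous to a 1-chain whose constituent 1-simplices correspond to genuine directed edges of $G$ of weight at most $\delta$. Since homology is linear, applying this replacement simplex-by-simplex to $\alpha = \sum_i c_i \sigma_i$ will yield the desired chain $\eta$ with $[\eta] = [\alpha]$. The existence of the non-bounding class is not really needed for the construction itself; it is just the context that makes the statement useful.

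By definition of the Dowker complex on the path completion, $[x,y]$ appears in $\FD_\delta(G)$ because of some witness $z \in V$ with $\omega_{P(G)}(z,x) \le \delta$ and $\omega_{P(G)}(z,y) \le \delta$. Finiteness of $G$ guarantees that these infima are attained, so I fix actual $G$-paths $z = a_0, a_1, \dots, a_m = x$ and $z = b_0, b_1, \dots, b_n = y$ each of total weight at most $\delta$. Non-negativity of weights gives two facts that drive the argument: every individual edge along either path has $\omega_G$-weight at most $\delta$, and every prefix of either path has total weight at most $\delta$, so $\omega_{P(G)}(z, a_i) \le \delta$ and $\omega_{P(G)}(z, b_j) \le \delta$ for every $i, j$. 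Hence $\{z, a_0, \dots, a_m, b_0, \dots, b_n\}$ is a simplex of $\FD_\delta(G)$ with $z$ as witness.

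From this observation I get two homology relations to stitch together. First, the 2-simplex $[z,x,y]$ has boundary giving $[x,y] \sim [z,y] - [z,x]$. Second, inside the contractible simplex spanned by $\{z, a_0, \dots, a_m\}$, any 1-cycle is a boundary; the chain $[z,x] - \sum_{i=0}^{m-1}[a_i, a_{i+1}]$ is such a cycle, since both have boundary $x - z$, so $[z,x]$ is homologous to $\sum_i [a_i, a_{i+1}]$. The same argument on the $b_j$ gives $[z,y] \sim \sum_j [b_j, b_{j+1}]$. Chaining these, $[x,y]$ is homologous to the signed sum $\sum_j [b_j, b_{j+1}] - \sum_i [a_i, a_{i+1}]$, and each summand is a 1-simplex whose endpoints are joined by an actual $G$-edge of weight at most $\delta$.

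Collecting these replacements across $\alpha$ produces $\eta = \sum_i e_i \eta_i$ satisfying $[\eta] = [\alpha]$, with each $\eta_i = [u,v]$ admitting $\omega_G(u,v) \le \delta$, which in particular implies the disjunctive condition in the statement. The main thing to be careful about is bookkeeping orientations: simplices are written with respect to the chosen vertex order, while $G$-edges carry their own direction, so signs in the decomposition depend on which of $\omega_G(u,v)$ or $\omega_G(v,u)$ realizes the $G$-edge at hand. This is why the conclusion phrases the edge condition as a disjunction rather than a single inequality. I do not anticipate any substantive difficulty: all the homology relations used live inside contractible simplices of $\FD_\delta(G)$, where 1-cycles are automatically boundaries, so the argument is essentially a direct expansion of shortest-path witnesses into their defining $G$-paths.
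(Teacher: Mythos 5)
Your argument is correct and follows essentially the same route as the paper's proof: both use the Dowker witness $z$ to produce a $2$-simplex replacing $[x,y]$ by witness-incident edges, and both use the fact that every prefix of a weight-$\le\delta$ shortest path stays within $\delta$ of its source to expand those edges into genuine $G$-edges of weight $\le\delta$. The only difference is organizational --- the paper does this in two passes (first reducing $\alpha$ to a chain of short $P(G)$-edges, then expanding each such edge via an explicit telescoping $2$-chain), whereas you merge the two steps and invoke contractibility of the path simplices in place of the explicit $2$-chain.
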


\begin{proof}
    Let $\alpha=\sum_i a_i\alpha_i$. For some $j$, suppose that $\alpha_j=[v,w]$ is such that $\omega_G(v,w) > \delta$ and $\omega_G(w,v) > \delta$ hold. By definition, both bounds hold in $P(G)$ as well. It follows by construction that there exists a vertex $x$ such that $\omega_{P(G)}(x,v)\leq \delta$ and $\omega_{P(G)}(x,w)\leq \delta$. This implies the existence of the 2-chain $[x,v,w]$ in the chain complex. Thus the first chain group contains
    \[
    -a_j \partial_2([x,v,w]) + \alpha
    = -a_j (\alpha_j-[x,w]+[x,v]) + \sum_i a_i\alpha_i.
    \]
    The addition of the boundary removes the term $a_j\alpha_j$ from $\alpha$, instead leaving $-a_j[x,w]$ and $a_j[x,v]$ which correspond with edges of weight less than $\delta$ in $P(G)$.
    Because it differs by a boundary, this new chain is homologous to $\alpha$. If we repeat this process for all elements of $\alpha$, we find that $\alpha$ is homologous to $\zeta=\sum_i z_i \zeta_i$ where $\zeta_i=[x_i,v_i]$ implies $\omega_{P(G)}(x_i,v_i)\leq \delta$ or $\omega_{P(G)}(v_i,x_i)\leq \delta$.

    Now we find $\eta$, with edges corresponding to edges in $G$, that $\zeta$ is homologous to. For some $p$, consider $\zeta_p=[x_p,v_p]$. As we saw previously this implies $\omega_{P(G)}(x_p,v_p)\leq \delta$ or $\omega_{P(G)}(v_p,x_p)\leq \delta$. Without loss of generality, we assume the former. Since the edge weights of $P(G)$ come from directed paths in $G$ it follows that there exist vertices $y_2,\dotsc,y_{m-1}$ such that with $y_1=x_p$ and $y_m=v_p$ we have
    \[
    \omega_{P(G)}(x_i,v_i)=\sum_{j=1}^{m-1}\omega_G(y_j,y_{j+1}).
    \]
    Since we assume edge weights are positive, it follows that  $\omega(y_j,y_{j+1})\leq \delta$ for all $1\leq j<m$. It also follows that $\omega_{P(G)}(y_1,y_j)\leq \delta$ for all $1\leq j<m$ and therefore the complex contains $[x_i,y_1,\dotsc,y_m,v_i]$.

    It follows that the first chain group contains
    \begin{align*}
        \sum_{1\leq j\leq m-2} 
        z_p \partial_2([y_j,y_{j+1},y_m])
        &=
        \sum_{1\leq j\leq m-2} 
        z_p ([y_{j+1},y_m]-[y_j,y_m]+[y_j,y_{j+1}])
        \\
        &=
        -z_p[y_1,y_m]+z_p \sum_{1\leq j\leq m-2} [y_j,y_{j+1}].
    \end{align*}

    Thus 
    \begin{align*}
        \zeta + \sum_{1\leq j\leq m-2} 
        z_p \partial_2([y_j,y_{j+1},y_m])
        &=
        \zeta -z_p \zeta_p+z_p \sum_{1\leq j\leq m-2} [y_j,y_{j+1}],
    \end{align*}
    this replaces the term $z_p\zeta_p$ in $\zeta$ with a sum of $1$ simplices corresponding to edges with weights less than $\delta$ in $G$. Since the difference is again a boundary, this new chain is homologous to $\zeta$. 
    If we again repeat this process for all the elements of $\zeta$, we find a new $1-$chain $\eta$ with elements $\eta_j=[x_p,x_q]$ such that $\omega_G(x_p,x_q)\leq \delta$ or $\omega_G(x_q,x_p)\leq \delta$. Additionally, the homology class $[\alpha]=[\zeta]=[\eta]$. 
    
\end{proof}

The above result ties the existence of non bounding cycles in homology to graph cycles with arbitrary edge orientation in the underlying graph, stating through the contrapositive that you can only have first homology cycles if they correspond with graph cycles. 

This motivates the pursuit of a better understanding of the persistence of Dowker complexes on cycle graphs with arbitrary orientations. In what remains of this section, we characterize this persistence. For us cycle graphs are weighted directed graphs whose underlying undirected graph is a cycle. 

\begin{definition}
    A \textbf{weighted cycle network} of size $n$ is the weighted digraph $G = ([n],\omega)$ on $n$ vertices $[n] := \{1,\dots,n\}=\mathbb{Z} /n\mathbb{Z}$ such that either $\omega(i,j) < \infty$ or $\omega(j,i) < \infty$ if and only if $j = i+1 \pmod{n}$.
\end{definition}

If such a graph has all edge orientations in the positive direction (mod $n$) then we call this a \textbf{consistently oriented cycle graph}. 


\subsubsection{Consistently Oriented Cycle Graphs}

We begin by considering a subclass of cycle graphs we have particular interest in due to their relationship with periodic behavior in our setting; consistently oriented cycle graphs. In this section we fully characterize their 1 dimensional Dowker persistence, extending the previous results of \cite{chowdhury2018AsymNets}, which do so in the case of uniform weight $1$. Two examples satisfying this condition can be seen in Figures \ref{fig:unweighted_consistent_hex} and \ref{fig:unweighted_consistent_oct}.
Both exemplify the results of the above which find a nontrivial element in the first homology when $1\leq \delta\leq \lceil \frac{n}{2} \rceil$. We will show that a graph theoretic perspective aids in the generalization of this result, describing first homological features in terms of paths in the underlying graph. For the consistently oriented cycle, we will show that the homology class closes when two vertices see one another across the graph with shortest paths. 
\begin{figure}[t]
    \begin{minipage}{.5\textwidth}
        \centering
        \includegraphics[width=\linewidth]{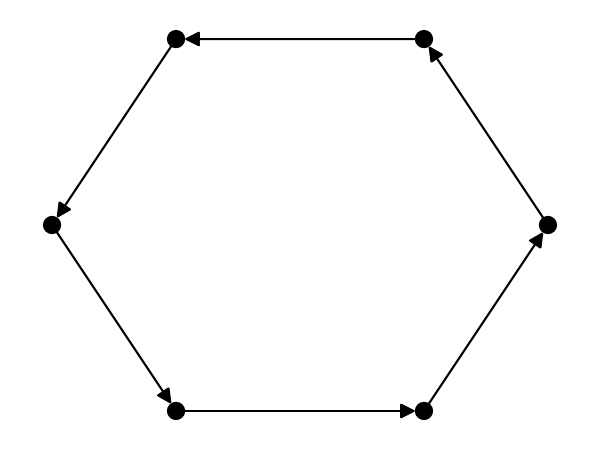}
    \end{minipage}%
    \begin{minipage}{0.5\textwidth}
        \centering
        \includegraphics[width=\linewidth]{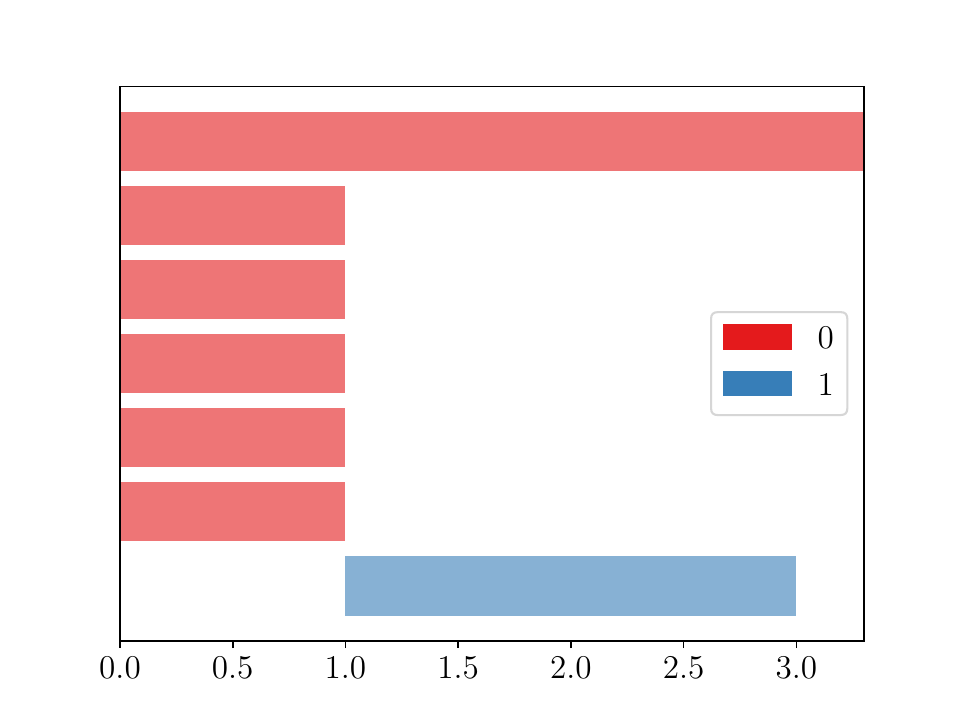}
    \end{minipage}
    \caption{Example of a uniform weight $1$, consistently-oriented cycle with six vertices (left) and its corresponding barcode (right). See the first homology class born at $\delta=1$ and die at $\delta=\lceil 6/2 \rceil =3$.}
    \label{fig:unweighted_consistent_hex}
\end{figure}

\begin{figure}[t]
    \begin{minipage}{.5\textwidth}
        \centering
        \includegraphics[width=0.8\linewidth]{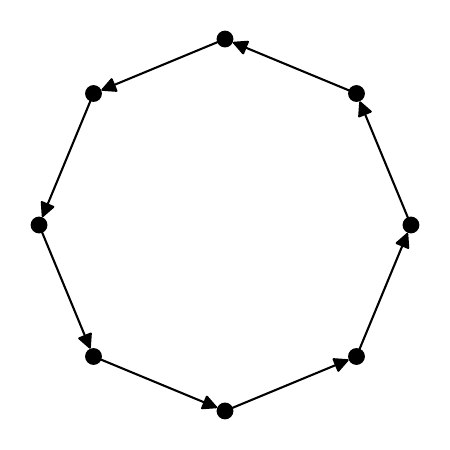}
    \end{minipage}%
    \begin{minipage}{0.5\textwidth}
        \centering
        \includegraphics[width=\linewidth]{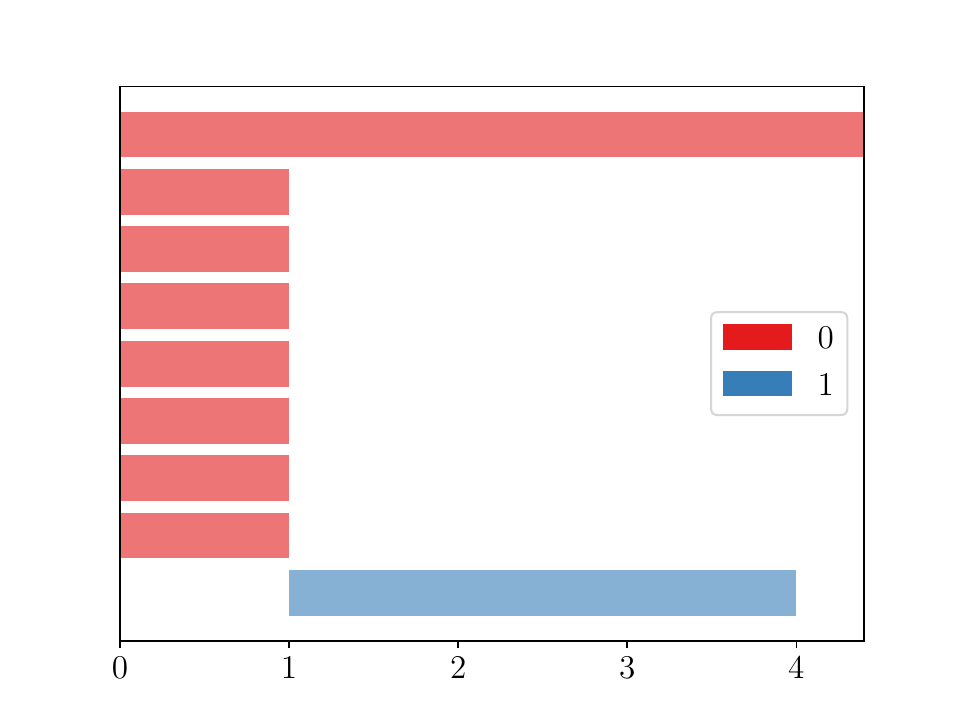}
    \end{minipage}
    \caption{Example of a uniform weight $1$, consistently-oriented cycle with eight vertices (left) and its corresponding barcode (right).  See the first homology class born at $\delta=1$ and die at $\delta=\lceil 8/2 \rceil =4$.}
    \label{fig:unweighted_consistent_oct}
\end{figure}

We start by distinguishing one property of consistently oriented cycle graph complexes which will be useful in the coming analysis. That is, every $1-$simplex of the Dowker complex corresponds to an edge between those vertices which has weight less than $\delta$. This comes from the fact that all vertices in a consistently oriented cycle graph have out degree $1$.

\begin{lem}\label{lem:simplex_order}
    Let $G$ be a consistently oriented cycle graph, and $\FD_\delta(G)$ the Dowker complex on $P(G)$. Every simplex $\Delta = [x_1,\dotsc,x_k]\in \FD(R_\delta)$ has one vertex $x_j$ such that $\omega(x_j,x_i)<\delta$ for all $i$. Moreover, every $1-$simplex in $\Delta$ corresponds to at least one directed edge in $P(G)$ with weight less than $\delta$.
\end{lem}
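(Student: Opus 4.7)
The plan is to exhibit an explicit witness for $\Delta$ among its own vertices, exploiting the fact that consistent orientation forces a unique forward path between any two vertices in $P(G)$.

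By definition of the Dowker source complex on $P(G)$, there exists some vertex $y \in [n]$ with $d(y, x_i) \leq \delta$ for every $i$, where $d$ is the shortest-path function from Equation \ref{def: shortest path function}. Since $G$ is consistently oriented, the only directed path from any $u$ to any $v$ traces the cycle in the forward direction, so $d(u,v)$ equals the forward arclength from $u$ to $v$. The key structural consequence to exploit is additivity: if $u$ lies on the forward arc from $y$ to $v$, then $d(y,v) = d(y,u) + d(u,v)$. Now choose $x_j$ to be the vertex of $\Delta$ minimizing $d(y, x_j)$. Every other vertex $x_i$ then lies strictly further along the forward arc from $y$, so $x_j$ is interposed between $y$ and $x_i$, and additivity gives $d(x_j, x_i) = d(y, x_i) - d(y, x_j) \leq \delta$. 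The case $x_i = x_j$ is immediate since $d(x_j, x_j) = 0$.

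For the second claim, fix any $1$-simplex $[x_a, x_b] \subseteq \Delta$. With the witness $x_j$ above, both endpoints lie within forward distance $\delta$ of $x_j$, so without loss of generality $x_a$ precedes $x_b$ along the forward arc from $x_j$; the same additivity computation yields $d(x_a, x_b) \leq d(x_j, x_b) \leq \delta$, and this realizes the desired directed edge in $P(G)$.

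The main obstacle is the modular bookkeeping needed to rigorously define forward closeness and forward order on $[n] = \mathbb{Z}/n\mathbb{Z}$, and to verify that additivity of $d$ along the forward arc is not undermined by an alternative (wrap-around) path. This is exactly where consistent orientation enters: any route that wraps the cycle adds the full cycle weight, so the forward arclength genuinely is the shortest path and the ordering of $\{x_1, \dots, x_k\}$ by forward distance from $y$ is unambiguous, which is what legitimizes the choice of $x_j$ in the argument above.
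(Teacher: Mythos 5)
Your proposal is correct and follows essentially the same route as the paper's proof: both identify the vertex of $\Delta$ that comes first along the forward arc from the Dowker witness (you via minimizing $d(y,\cdot)$, the paper via the cyclic vertex order) and then use additivity of the shortest-path weight along the unique forward path to subtract off $d(y,x_j)$, iterating the same computation for the second claim. The only cosmetic difference is your use of $\leq\delta$ versus the paper's $<\delta$, a discrepancy already present between the paper's definition of $R_\delta$ and the lemma statement itself.
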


\begin{proof}
    Let the $\Delta = [x_1,\dotsc,x_k]$ be the simplex in question, with labels reflecting the order of $V(G)$. This simplex is included in the Dowker complex if and only if there exists a $v\in V(G)$ such that $\omega(v,x_i)<\delta$ for all $i$. 

    Suppose $v$ is not a vertex of $\Delta$ and by symmetry let $v=0$ without loss of generality. Notice that in a consistently oriented cycle graph, there exists a unique directed path between any two vertices. Further, given vertices with $x_1<x_2<x_3$ in the vertex order, $x_2$ lies on the unique directed path between $x_1$ and $x_3$, respecting that vertex order. Thus $\omega(x_1,x_3)=\omega(x_1,x_2) + \omega(x_2,x_3)$. Therefore, since $\omega(v=0,x_1)<\delta$ and $\delta>\omega(v,x_i)=\omega(v,x_1)+\omega(x_1,x_i)$ for all $i>1$, it follows $\omega(x_1,x_i)<\delta$ for $i\geq 1$.

    Note that this same argument implies $\omega(x_2,x_i)<\delta$ for $i\geq2$, and so on for all the vertices, implying the existence of sufficiently weighted edges for all pairs of vertices in $\Delta$.
\end{proof}

One can verify that the path completion of a consistently oriented cycle graph admits any vertex as a dominating set, which reflects that this network is strongly connected. Additionally, following Proposition \ref{prop:dominantnerve} we know that the maximal complex will always be contractible and therefore have a trivial first homology. Thus the aim is to show under what conditions on the underlying graph and $\delta$ there exists a nontrivial first homology. We make an inductive argument through a reduction to a smaller cycle graph.

We define an edge contraction operation in order to provide a reduced description of the Dowker complex. Let $G$ be a cycle graph on $n$ vertices. Define the contraction of $G$ with respect to vertex $n$ to be the graph $G'=G-\{n\}$ with the additional nonzero edge $\omega_{G'} (n-1,1) = \omega_G(n-1,n)+\omega_G(n,1)$. This creates a new cycle graph and preserves edge weight sums that don't have $n$ as an endpoint. We start by characterizing when the Dowker complexes over these graphs are homologically equivalent.

Given a graph $G$, and complex $\FD_\delta(G)$,  we denote the subcomplex associated to the $i$th vertex's out $\delta$ neighborhood as  $U_i=\text{Cl}([\{j| \omega_{P(G)}(i,j)\leq \delta\}])$. This set is in the full complex by construction, and represents the simplices that vertex $i$ contributes. Naturally the union of all out neighborhoods will yield the full complex, $\cup_{i=1}^n U_i = \FD_\delta (G)$, meaning the set $\{U_i\}_{i\in [n]}$ forms a proper cover of $\FD_\delta(G)$, which we call the neighborhood cover.

Additionally, for all $i$, $U_i$ is a simplex of some dimension and therefore any intersections between elements of the neighborhood cover are also simplices and therefore contractible. Thus by the nerve theorem $\FD_\delta(G)$ is homotopy equivalent to the nerve of $\{U_i\}_{i\in [n]}$.

In the following propositions we establish the relationship between the contracted cycle graph complex $A'$ and the uncontracted cycle graph complex $A$. We start by considering the neighborhood cover of $A'$, which has $n-1$ elements, $\{U_i\}_{i\in [n-1]}$, and adding an additional element ${U'}_{n-1}=\text{Cl}([\{j\neq n | \omega_{P(G)}(n,j)\leq \delta\}])$. In words, this is the subset of $U_1$ which vertex $n$ sees in $G$. Thus its inclusion in the cover won't change the homological equivalence and captures part of the connectivity of $A$ in the nerve, that is the vertices which $n$ sees.  

Notably, the addition of $U'_n$ to the cover will not make the nerve equal to $A$. It misses the subsimplices of $[\{j|\omega_{P(G)}(j,n)\leq \delta\}]$. This simplex is induced by the vertex furthest away from $n$, say $b$, as on a consistently oriented cycle graph it necessarily sees all the vertices between. We denote this simplex, induced by $b$ as $\Delta_b$. We formalize how we can reconstruct the original complex exactly, using the nerve and $\Delta_b$, in Proposition \ref{prop:reduced_equiv}.


\begin{prop}\label{prop:reduced_equiv}
    Let $G$ be a consistently oriented cycle graph and $A=\FD_\delta(G)$.
    Let $b$ be the smallest index where $\omega_{P(G)}(b,n)\leq \delta$, and $\Delta_b=[b,b+1,\dotsc,n]$ the induced simplex. Consider the neighborhood cover of the Dowker complex on the reduced cycle graph of $G$, $A'=\FD_\delta(G')$, augmented with an $n$th element $\{U_1, \dotsc, U_{n-1},U_n\}$ where $U_n\equiv \text{Cl}([\{j\neq n | \omega_{P(G)}(n,j)\leq \delta\}])$. Then Nerv$\left(\{U_1, \dotsc, U_{n-1},U_n\}\right) \cup \text{Cl}(\Delta_b)$ equals $\FD_\delta(G)$ under the bijective vertex mapping $\phi:[n]\to \{U_i\}_{i\in [n]}$. In other words, The addition of $\Delta_b$ to the nerve of the modified cover of the reduced complex $A'$ makes equivalence with $A$.
\end{prop}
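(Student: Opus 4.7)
The plan is to prove the set equality $\text{Nerv}(\{U_1, \ldots, U_n\}) \cup \text{Cl}(\Delta_b) = \FD_\delta(G)$ by showing both inclusions. A useful preliminary observation is that shortest path distances in $G$ and $G'$ agree on pairs from $[n-1]$: the new edge $(n-1, 1)$ introduced in $G'$ has weight equal to the path $(n-1) \to n \to 1$ in $G$, so no shortest path on $[n-1]$ is affected. As a result, each $U_i$ for $i \in [n]$ admits a uniform description as the simplex with vertex set $\{j \in [n-1] : \omega_{P(G)}(i, j) \leq \delta\}$, and hence a simplex $\sigma \subseteq [n]$ lies in the nerve precisely when some $w \in [n-1]$ satisfies $\omega_{P(G)}(i, w) \leq \delta$ for every $i \in \sigma$; in other words, the nerve realizes the Dowker sink complex restricted to sinks in $[n-1]$.

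For the inclusion $\supseteq$, I would handle $\Delta_b$ and nerve simplices separately. The simplex $\Delta_b$ is sourced by $b$: since $\omega_{P(G)}(b, n) \leq \delta$ and each intermediate $j \in [b, n]$ sits on the unique forward path from $b$ to $n$, we get $\omega_{P(G)}(b, j) \leq \omega_{P(G)}(b, n) \leq \delta$. For a nerve simplex $\sigma$ with common sink $w \in [n-1]$, the vertices of $\sigma$ all lie on the backward $\delta$-arc terminating at $w$; letting $v \in \sigma$ be its earliest vertex on this arc, every $x \in \sigma$ sits on the forward path from $v$ to $w$, yielding $\omega_{P(G)}(v, x) \leq \omega_{P(G)}(v, w) \leq \delta$ and placing $\sigma$ in $\FD_\delta(G)$ with source $v$.

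For the inclusion $\subseteq$, given $\sigma \in \FD_\delta(G)$, Lemma~\ref{lem:simplex_order} supplies a source $v \in \sigma$ whose forward $\delta$-arc contains $\sigma$; let $x^*$ denote the terminal vertex of $\sigma$ along this arc. If $x^* \neq n$ then $x^* \in [n-1]$, and each $i \in \sigma$ lies on the forward path from $v$ to $x^*$, so $\omega_{P(G)}(i, x^*) \leq \omega_{P(G)}(v, x^*) \leq \delta$, making $x^*$ a common sink and placing $\sigma$ in the nerve. If instead $x^* = n$ then $\sigma \subseteq \{v, v+1, \ldots, n\}$; the presence of $v$ with $\omega_{P(G)}(v, n) \leq \delta$ forces $v \geq b$ by minimality of $b$, giving $\sigma \subseteq \{b, \ldots, n\} = \Delta_b$.

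I expect the main obstacle to be the case $x^* = n$ in the $\subseteq$ direction, since this is precisely the scenario where the nerve fails to capture $\sigma$ and the proof must fall back on $\Delta_b$. A related subtlety arises when the source $v$ is itself $n$: there the forward arc begins at $n$ and continues through $1, 2, \ldots, m$, so any non-singleton $\sigma$ has $x^* \in [n-1]$ and is handled by the nerve, while the singleton $\{n\}$ lies automatically in $\text{Cl}(\Delta_b)$ (using $b = n$ if no other vertex sees $n$ within $\delta$). Underlying everything is the rigidity of $\delta$-arcs in a consistently oriented cycle: they are linearly ordered intervals that can be anchored at either endpoint, making the source at one end and the sink at the other interchangeable, which is what allows the source-driven Dowker condition and the sink-driven nerve condition to coincide on all simplices except those forced to have $n$ as their unique available sink.
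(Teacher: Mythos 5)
Your proof is correct and follows essentially the same route as the paper's: both argue the two inclusions by ordering a simplex along the cycle via Lemma~\ref{lem:simplex_order} and splitting on whether its terminal vertex is $n$, with the nerve capturing simplices admitting a common sink in $[n-1]$ and $\text{Cl}(\Delta_b)$ absorbing those whose only sink is $n$. Your explicit preliminary observation that $P(G)$ and $P(G')$ agree on pairs from $[n-1]$ is a point the paper uses only implicitly, and making it explicit is a small improvement rather than a divergence.
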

\begin{proof}
    We show the equivalence by proving that $[v_1,\dotsc,v_m]\in A$ if and only if $[\phi(v_1),\dotsc,\phi(v_m)]\in$ Nerv$
    \left(\{U_1, \dotsc, U_{n-1},{U}_n\}\right) \cup \text{Cl}(\Delta_b)$.

    ( $\Longrightarrow$ )
    Let $[v_1,\dotsc,v_m]\in A$. Assume this is ordered so that $i<j\implies\omega_{P(G)}(v_i,v_j)\leq \delta$ (we can do this by Lemma \ref{lem:simplex_order}). We consider two cases. 

    (Case $1$) Assume that $v_m\neq n$. From the claim $\omega_{P(G)}(v_j,v_m)\leq \delta$ for all $1\leq j<m$. Since $v_m\in V(G')$ it follows $\omega_{P(G')}(v_j,v_m)\leq \delta$ for all $1\leq j<m$. Thus $v_m\in\cap_{v_j\neq n} \phi(v_j)$. But if there exists $1\leq j<m$ such that $v_j=n$, the fact that $\omega_{P(G)}(v_j,v_m)\leq \delta$ implies $v_j\in U_n$ too. Thus $v_m\in\cap_{j=1}^m \phi(v_j)$. Therefore $[\phi(v_1),\dotsc,\phi(v_m)]\in\text{Nerv}\left(\{U_1,\dotsc,U_n\}\right)$.
    
    (Case $2$) Assume $v_m=n$. From the claim $\omega_{P(G)}(v_j,v_m=n)\leq \delta$ for all $1\leq j<m$. By construction $\phi(v_j)\in \Delta_b$ for all $1\leq j< m$. Thus $[\phi(v_1),\dotsc,\phi(v_m=n)]\in \text{Cl}(\Delta_b)$.

    ( $\Longleftarrow$ )
    Let $[\phi(v_1),\dotsc,\phi(v_m)]\in$ Nerv$\left(\{U_1, \dotsc, U_{n-1},{U'}_n\}\right) \cup \text{Cl}(\Delta_b)$. By definition $\cap_{i=1}^m \phi(v_i)\neq \emptyset$ or $[\phi(v_1),\dotsc,\phi(v_m)]\in$ Cl$(\Delta_b)$. 
    
    In the first case there exists $v_p$ in this intersection such that $\omega_{P(G)}(v_i,v_p)\leq \delta$ for $1\leq i\leq m$. This works for $v_i= n$ by our definition of $U_n$. Thus there exists $v_q$ with largest edge weight to $v_p$, $\omega_{P(G)}(v_q,v_p)$, which in turn implies that it sees all the other vertices in question and we have $[v_1,\dotsc,v_p]\in A$.

    In the second case, by the definition of $\Delta_b$, for all $1\leq i\leq m$ the edge weight $\omega_{P(G)}(v_i,v_m=n)\leq \delta$. Thus, as before there exists $v_p$ with largest edge weight $\omega_{P(G)}(v_p,n)$ which can see all the other vertices in question and we have $[v_1,\dotsc,v_p]\in A$.
\end{proof}

The above shows the relationship between the reduced complex and the original. We now show how and when the addition of $\Delta_b$ changes the first homology.
In other words, when does the reduced complex have the same homology as the original? We find in Proposition \ref{prop:1sthom_equiv} that a mismatch can only occur when vertices can see each other in the underlying graph.

\begin{figure}
    \centering
    \includegraphics[width=0.9\linewidth]{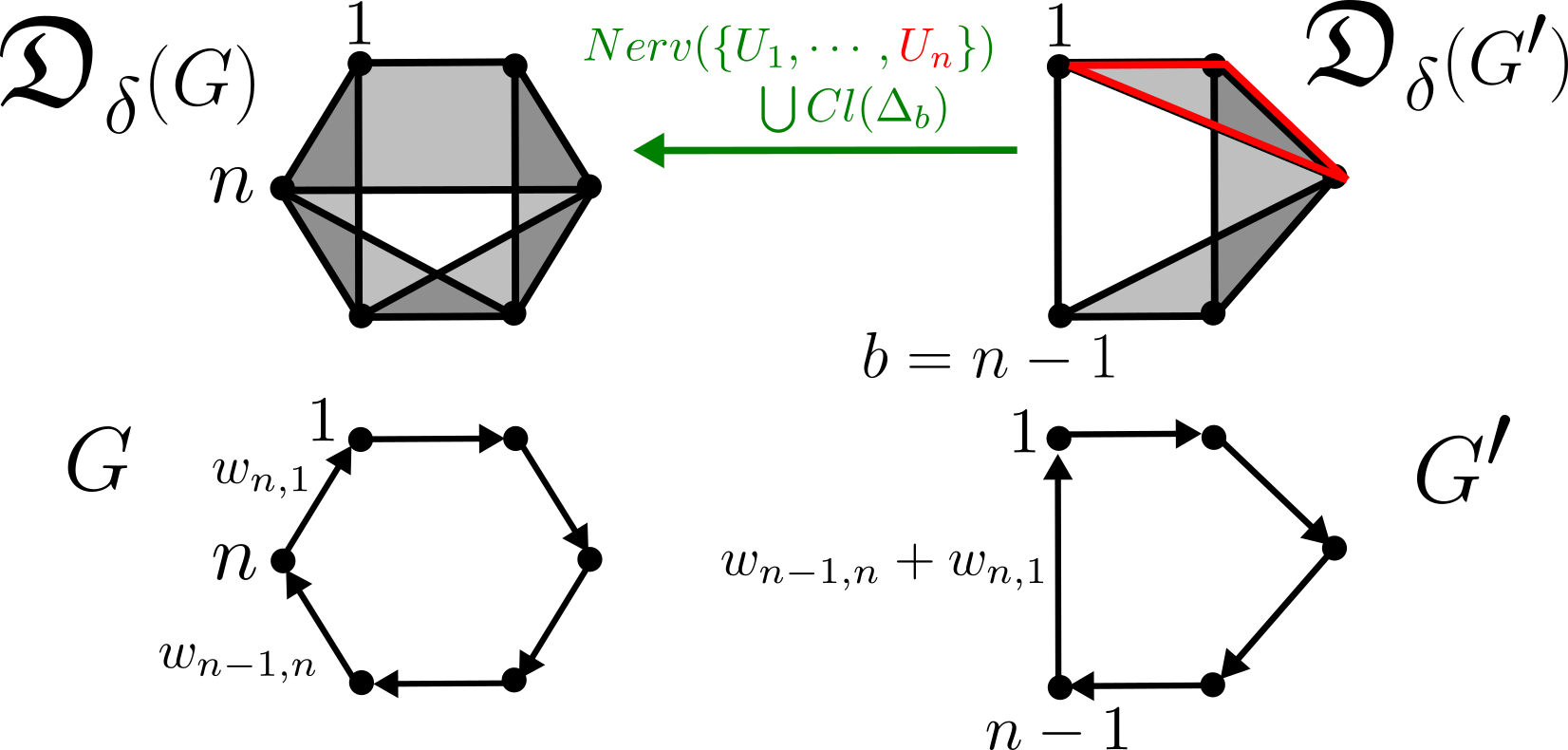}
    \caption{This figure exemplifies the argument of Proposition \ref{prop:reduced_equiv}. On the bottom we depict the original graph $G$, as well as its contraction, $G'$. On the top, we show their associated Dowker complexes and indicate the augmented nerve of the complex of $G'$ which recovers the complex of $G$.}
\end{figure}

\begin{prop}\label{prop:1sthom_equiv}
    Let $G$ be a consistently oriented cycle graph. Assume $\delta\geq \min_{i\in [n]}\{\omega_G(i,i+1)+\omega_G(i+1,i+2)\}$ and choose labeling such that $i=n-1$ satisfies this constraint. Assume that no two vertices see each other with weight less than $\delta$ in $P(G)$. Let $b$ be the smallest index where $\omega_{P(G)}(b,n)\leq \delta$, and $\Delta_b=[b,b+1,\dotsc,n]$ the induced simplex. Let $G'$ be the contraction of $G$ with respect to $n$. Then $A= \FD_\delta(G)$ and $A' = \FD_\delta(G')$ have the same first homology. 
    
\end{prop}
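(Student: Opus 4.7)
Proof proposal.

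The plan is to reduce the statement to a contractibility result about the link of the vertex $n$ in $A$, via Mayer--Vietoris centered on that vertex.

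First I would identify $A'$ with the full subcomplex $A|_{[n-1]}$ of $A$ supported on $\{1,\dotsc,n-1\}$. The contraction preserves the path completion on $[n-1]$: any $G$-path among vertices of $[n-1]$ either avoids $n$ (and is identical in $G'$) or traverses the edges $(n-1,n),(n,1)$, which become the single contracted edge $(n-1,1)$ of matching weight in $G'$, so $\omega_{P(G)}(k,j) = \omega_{P(G')}(k,j)$ for $k,j \in [n-1]$. Moreover, any simplex $\sigma \subseteq [n-1]$ witnessed by $n$ in $G$ is also witnessed by the vertex $1$, since $\omega_{P(G)}(n,j) = \omega_G(n,1) + \omega_{P(G)}(1,j)$. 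Writing $A = \bar{\mathrm{St}}_A(n) \cup A'$ with intersection $\mathrm{Lk}_A(n)$, and noting that $\bar{\mathrm{St}}_A(n)$ is contractible (a cone on $n$), the Mayer--Vietoris sequence forces $H_1(A) \cong H_1(A')$ whenever $\mathrm{Lk}_A(n)$ has trivial reduced $H_0$ and $H_1$.

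To handle the link, let $V_k$ denote the closed simplex on the set of vertices $j \neq n$ with $\omega_{P(G)}(k,j) \leq \delta$. Any simplex of $A$ containing $n$ needs a witness that sees $n$, which by definition of $b$ must lie in $\{b,\dotsc,n\}$, so $\mathrm{Lk}_A(n) = \bigcup_{k=b}^{n} V_k$. Since all intersections of these $V_k$ are themselves simplices, the nerve theorem yields $\mathrm{Lk}_A(n) \simeq \mathrm{Nerv}(\{V_k\}_{k=b}^{n})$, and it suffices to prove the nerve is contractible. Three observations then finish the argument: (i) each $V_k$ with $b \leq k \leq n-1$ contains $n-1$, because $\omega_{P(G)}(k,n-1) < \omega_{P(G)}(k,n) \leq \delta$, so $\{V_b,\dotsc,V_{n-1}\}$ spans a full simplex in the nerve; (ii) the no-mutual-sight hypothesis forces the vertex set of $V_n$ to lie in $\{1,\dotsc,b-1\}$, since $n$ seeing any index $k \geq b$ would give mutual visibility with $k$ (who already sees $n$ by definition of $b$); (iii) consequently $V_n \cap V_k$ lives entirely in the wraparound portion of $V_k$ and is nonempty precisely when $k$ sees the vertex $1$ --- a condition guaranteed for $k = n-1$ by the hypothesis $\delta \geq \omega_G(n-1,n)+\omega_G(n,1)$. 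Letting $k^*$ be the smallest such index, the nerve decomposes as the union of the simplex on $\{V_b,\dotsc,V_{n-1}\}$ and the simplex on $\{V_n,V_{k^*},\dotsc,V_{n-1}\}$, glued along the common face on $\{V_{k^*},\dotsc,V_{n-1}\}$, and is therefore contractible.

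The main obstacle is the combinatorial bookkeeping in (ii) and (iii): carefully identifying which $V_k$ are connected to $V_n$ in the nerve and ruling out unexpected higher-dimensional simplices that could introduce new first homology. The no-mutual-sight hypothesis is exactly what prevents $V_n$ from meeting the forward portions of the other $V_k$'s, while the lower bound on $\delta$ ensures that the two pieces of the nerve share at least one vertex, so the union is a genuinely contractible ``book'' of two simplices rather than a disconnected pair.
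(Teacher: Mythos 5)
Your proof is correct, but it takes a genuinely different route from the paper's. The paper works with the decomposition $A \cong N \cup \mathrm{Cl}(\Delta_b)$ coming from Proposition \ref{prop:reduced_equiv}, where $N$ is the augmented nerve homotopy equivalent to $A'$, and then argues at the chain level in both directions: producing a cycle representative supported on the outer edges for one implication, and deriving a contradiction from a hypothetical bounding $2$-chain (split into its $B\setminus N$ and $N$ parts) for the other. You instead identify $A'$ directly with the full subcomplex of $A$ on $[n-1]$ (correct, since the contraction preserves $\omega_{P(G)}$ on $[n-1]$ and any simplex witnessed by $n$ is also witnessed by $1$), decompose $A = \overline{\mathrm{St}}_A(n)\cup A|_{[n-1]}$, and reduce everything via Mayer--Vietoris to the contractibility of $\mathrm{Lk}_A(n)$, which you compute as the nerve of the simplices $V_b,\dotsc,V_n$. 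This buys several things: the isomorphism is induced by the inclusion $A'\hookrightarrow A$ (a slightly stronger, more functorial statement); the argument treats all $\delta\geq\omega_G(n-1,n)+\omega_G(n,1)$ uniformly, with no separate sub-case for $\delta<\max_i\omega_G(i,i+1)$; and it replaces the paper's delicate coefficient bookkeeping with a short combinatorial description of a nerve as two simplices glued along a nonempty common face. Two small points to tighten: your identification of which subsets of $\{V_b,\dotsc,V_n\}$ have nonempty intersection uses, and should state, the monotonicity that if $k$ sees vertex $1$ then so does every $k'$ with $k\leq k'\leq n-1$ (so the indices seeing $1$ form the interval $\{k^*,\dotsc,n-1\}$); and in observation (ii) you turn $\omega_{P(G)}(n,k)\leq\delta$ and $\omega_{P(G)}(k,n)\leq\delta$ into a violation of the hypothesis phrased with strict inequality --- this $\leq$ versus $<$ slippage is inherited from the paper's own proof, but it is worth flagging that the hypothesis should be read with the same $\leq\delta$ convention as the Dowker relation.
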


\begin{proof}
     Consider the nerve of the neighborhood cover of $A'$ augmented with an $n$th element, $N=$Nerv$\left(\{U_1,\dotsc,U_n\equiv\text{Cl}([\{j\neq n | \omega_{P(G)}(n,j)\leq \delta\}\}])\right)$, and let $B=\text{Cl}(\Delta_b)$. Since $G$ and $G'$ are cycle graphs, Proposition \ref{prop:hom_to_bdry} has shown that each corresponding complex can have at most one nontrivial first homology class. Further, by Proposition \ref{prop:reduced_equiv} we have $A\approx N\cup B$, and by the nerve theorem we have homotopy equivalence between $N$ and $A'$. Thus it is sufficient to show that $N$ has nontrivial first homology if and only if $N\cup B$ has nontrivial first homology. 

     Suppose $\delta<\max_i \omega_G(i,i+1\pmod{n})$. Then $\delta<\max_i \omega_{G'}(i,i+1\pmod{n})$. By the contrapositive of Proposition \ref{prop:hom_to_bdry} it follows that neither $A$ nor $A'$ may have nontrivial first homology. Thus we assume $\delta\geq \max_i \omega_G(i,i+1\pmod{n})$.

     ($\implies$) For the first implication, suppose there exists an $H^1$ cycle in $N\cup B$. By Proposition \ref{prop:hom_to_bdry}, we may choose a cycle representative from $N\cup B= A$ of $\gamma=\sum_{i=1}^n b_i e_i$ with $\{e_i\}_{i=1}^n$ being the set of $1-$simplices corresponding to the edges of $G$. Note these are all in the complex because $\delta\geq \max_i \omega_G(i,i+1\pmod{n})$. Because our assumption is that $\delta\geq \omega(n-1,n)+\omega(n,1)$, it follows that $U_{n-1}\cap U_1\neq \emptyset$ implying $N$ includes $e_{n-1}=[n-1,n]$. This complex includes $e_n=[1,n]$ since $U_n\subseteq U_1$, as well as all the other $e_i$ by construction. Thus the first chain group of $N$ contains $\gamma$, and since $N$ is a subcomplex of $N\cup B$, it follows that $\gamma$ is nonbounding in $N$ as well.



    ( $\Longleftarrow$ ) For the second implication we assume there exists an $H^1$ cycle in $N$ and one does not exist in $N\cup B$. 
    
     It follows that the addition of $B$ adds $2-$simplices to $N$ which allow for the non-bounding cycle of the nerve to be expressed as a boundary. We showed in Proposition \ref{prop:hom_to_bdry} that any non-bounding cycle of a cycle graph is homologous to the outer cycle from the base graph. Let $\gamma=\sum_{i=1}^n b_i e_i$ be the nontrivial cycle representative from $N$, where $\{e_i\}_{i=1}^n$ is the set of $1-$simplices corresponding to outer cycle edges, that is the edges of $G$. By assumption there exists a $2-$chain of $N\cup B = A$, $\alpha$, such that $\partial_2(\alpha)=\gamma$. 
     
     Recall that $\Delta_b$ is a simplex in $A$ created by the vertices $v_j$ which have $n$ in their out $\delta$ neighborhood. We note that the simplices in the closure of $\Delta_b$ which do not include $n$ are already in $N$ since the weight function of $P(G)$ is preserved in $P(G')$ for edges not containing $n$. By construction, $n$ is a sink in all the remaining simplices of $B$. With this in mind, it follows

    \begin{align*}
        \gamma &= \partial_2\left(\alpha\right)\\
        &= 
        \partial_2\left(\sum_{\Delta=[i,j,k]\in N\cup B}a_{\Delta} [i,j,k]\right)\\
        &= 
        \partial_2\left(
        \sum_{\Delta=[i,j,n]\in B\setminus N}a_{\Delta} [i,j,n]
        +
        \sum_{\Delta=[i,j,k]\in N}a_{\Delta} [i,j,k]
        \right)\\
        &=
        \sum_{\Delta=[i,j,n]\in B\setminus N}a_{\Delta} ([j,n]-[i,n]+[i,j])
        +
        \sum_{\Delta=[i,j,k]\in N}a_{\Delta} ([j,k]-[i,k]+[i,j])\\
        &=
        \left(a'_{[n-1,n]}[n-1,n]
            +
            \sum_{\substack{\Delta'=[i,j]\in N\\
            b\leq  i,j< n   }}a'_{\Delta'} [i,j]
            +
            \sum_{\Delta'=[i,n]\in B}a'_{\Delta'} [i,n]
        \right)
        +
        \left(
            \sum_{\Delta'=[i,j]\in N}a'_{\Delta'} [i,j]
        \right)
    \end{align*}

    If the boundary sum on the left is zero, then the boundaries from $\Delta_b$ must have canceled themselves out and the right sum constitutes a $2-$chain of $N$ whose boundary is $\gamma$, a contradiction. So we assume that the left summand is not zero. 
    
    Suppose $\sum_{\Delta'=[i,n]\in B}a'_{\Delta'} [i,n]=0$.
    Notice that that any boundary containing $[n-1,n]$ with nonzero coefficient must also contain $[i,n]$ for some $i$. Thus this assumption implies $a'_{[n-1,n]}=0$. It follows the boundary of simplices from $\Delta_b$, represented in the left summand, is $\sum_{\substack{\Delta'=[i,j]\in N\\b\leq  i,j< n}}a'_{\Delta'} [i,j]$. 
    However, since $\omega_{P(G)}(b,n-1)\leq \delta$ it follows that $[b,b+1,\dotsc,n-1]\in N$ which implies this boundary is trivial in $N$. This also contradicts the assumption that $\gamma$ is a nonbounding cycle in $N$ since the sum of boundaries trivial in $H^1$ cannot make a nontrivial element.

    Thus there exists some $\Delta'=[i,n]\in B$ where $b\leq i<n-1$ and $a'_{\Delta'}\neq 0$ above. Then it follows that an element in the right summand must cancel this out. Therefore $[i,n]\in B\cap N$. Since $[i,n]\in B$ we know $\omega_{P(G)}(i,n)\leq \delta$ by construction. However, from $[i,n]\in N$ it follows that $\omega_{P(G)}(n,i)\leq \delta$ (by the construction of $U_n$). But now $i$ and $n$ can see each other and we have a contradiction.
\end{proof}

This tells us that the condition of two vertices seeing one another with weight less than $\delta$ is enough to guarantee that the first homology is contractible. This observation in combination with an inductive argument allows us to show the triviality of the first homology class if and only if two vertices can see one another in the underlying graph.

\begin{prop}\label{prop:bidirectionH1}
    Let $G$ be a consistently oriented  cycle graph. Suppose $\delta \geq \max_i \omega(i,i+1)$. Then $\FD_\delta(G)$ has trivial $H_1$ homology if and only if there exist two vertices $y_1,y_2$ in $P(G)$ with $\omega(y_1,y_2),\omega(y_2,y_1)<\delta$.
\end{prop}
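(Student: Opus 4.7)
The plan is to prove the two directions separately, using the dominating-set machinery of Section \ref{sec:dom_set_persistence} for one direction and an induction with the contraction reduction of Proposition \ref{prop:1sthom_equiv} for the contrapositive.

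For ($\Leftarrow$), suppose $y_1, y_2 \in V(G)$ satisfy $\omega_{P(G)}(y_1, y_2), \omega_{P(G)}(y_2, y_1) < \delta$. Because $G$ is a consistently oriented cycle, every vertex $v$ lies on the unique directed path from $y_1$ to $y_2$ or from $y_2$ to $y_1$ in $G$, hence is reached by $y_1$ or $y_2$ at weight strictly less than $\delta$ in $P(G)$. Let $H$ denote the digraph on $V(G)$ whose edges are those pairs of $P(G)$-weight at most $\delta$; unpacking the definition of the Dowker filtration, $\FD_\delta(G)$ coincides with the maximal non-path-completed Dowker complex $\FD^\star_{\delta_\text{max}}(H)$. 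Since $\{y_1, y_2\}$ is a source dominating set for $H$ of size two, Proposition \ref{prop:Dominating_set_reduction} with $k = 2$ immediately yields $H_1(\FD_\delta(G)) = 0$.

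For ($\Rightarrow$), I would argue the contrapositive by induction on $n = |V(G)|$. Assume no two vertices see each other in $P(G)$ at weight strictly less than $\delta$, and consider whether $\delta \geq \omega(i, i+1) + \omega(i+1, i+2)$ for some $i$. If so, relabel so that the inequality holds at $i = n-1$ and let $G'$ be the contraction of $G$ at vertex $n$. Then $G'$ is a consistently oriented cycle of size $n-1$; the contraction merges a two-step subpath into a single edge of equal weight, so $\omega_{P(G')}$ agrees with $\omega_{P(G)}$ on $V(G') = V(G) \setminus \{n\}$, and the no-two-see hypothesis is inherited. Proposition \ref{prop:1sthom_equiv} then gives $H_1(\FD_\delta(G)) \cong H_1(\FD_\delta(G'))$, and I would apply the induction hypothesis to $G'$.

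Otherwise $\omega(i, i+1) + \omega(i+1, i+2) > \delta$ for every $i$. Under this assumption I claim the out-$\delta$-neighborhood in $P(G)$ of each vertex $i$ is exactly $\{i, i+1\}$: forward paths past $i+1$ already exceed $\delta$ by this assumption, and the wraparound weight $\omega_{P(G)}(i, i-1)$ must exceed $\delta$ too, since otherwise, combined with $\omega(i-1, i) \leq \delta$, it would produce a pair of vertices seeing each other at weight less than $\delta$, contradicting the hypothesis. Hence $\FD_\delta(G)$ is exactly the 1-skeleton of the cycle $C_n$, giving $H_1(\FD_\delta(G)) \cong \mathbb{Z} \neq 0$. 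The induction terminates in this second scenario since it cannot proceed indefinitely: at $n = 3$ any admissible two-edge sum $\leq \delta$ immediately produces a seeing pair in $P(G)$, so the reduction route is blocked and Case B must occur at the latest here. The main delicacy throughout is the careful handling of strict versus non-strict inequalities in the wraparound step; once this is in place, the rest follows by chaining the two referenced propositions.
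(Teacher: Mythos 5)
Your proof follows essentially the same route as the paper's: the backward direction is a two-element contractible cover (you package it as a size-two dominating set fed into Proposition \ref{prop:Dominating_set_reduction}, whose proof is exactly the two-set nerve argument the paper applies directly to $U_1$ and $U_p$), and the forward direction is the same contrapositive induction, contracting an edge pair with $\omega(n-1,n)+\omega(n,1)\leq\delta$ via Proposition \ref{prop:1sthom_equiv} and terminating when every two consecutive edges sum to more than $\delta$, so that $\FD_\delta(G)$ is the bare $1$-skeleton of the cycle. The strict-versus-nonstrict boundary issue you flag at $n=3$ (a pair seeing each other at weight exactly $\delta$ kills $H_1$ without satisfying the strict inequality in the statement) is present in the paper's own base case as well, so it is not a defect of your argument relative to theirs.
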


\begin{proof}
    ($\Longleftarrow$)     
    Without loss of generality, assume $\max\{ \omega(1,p), \omega(p,1) \} < \delta$ where $p<n$.
    It follows that $1$ sees all vertices $i\leq p-1$ and $p$ sees all vertices $p\leq i\leq n$. Thus $U_1$ and $U_p$ cover the complex. Since their intersection is contractible, it follows that the complex is homotopy equivalent to Nerv$\{U_1, U_p\}$, which is a $1-$simplex and itself contractible.

    ( $\Longrightarrow$ )
    We prove the claim by induction.

    As our base case we take $G$ to be the consistently oriented cycle graph on three vertices. The Dowker complex $\FD_\delta(G)$ 
   then contains a two simplex which fills the cycle when some vertex contains the other two in its $\delta$ out neighborhood. If $\delta$ is also above $\max\{\omega_{P(G)}(1,2),\omega_{P(G)}(2,3),\omega_{P(G)}(3,1)\}$, as is our assumption, then we have a bidirected edge.

    Now assume that the claim is true for cycle graphs on $n-1$ vertices and let $G$ be a cycle graph on $n$ vertices such that no pair of vertices in $P(G)$ see each other with weight under $\delta$. 

    For $\max_i \omega_{P(G)}(i,i+1 \pmod{n})\leq \delta < \min_{i}\{\omega_{P(G)}(i,i+1\pmod{n})+\omega_{P(G)}(i+1\pmod{n},i+2\pmod{n})\}$ the lower bound allows the cycle of the form $\sum_{i=1}^n a_ie_i$ to exist, and the upper bound implies the complex has no $2-$simplices and thus the cycle cannot be a boundary.
    
    Now assume $\delta \geq \min_{i}\{\omega_{P(G)}(i,i+1\pmod{n})+\omega_{P(G)}(i+1\pmod{n},i+2\pmod{n})\}$. By symmetry we may assume without loss of generality this minimum is achieved by $\omega_{P(G)}(n-1\pmod{n},n)+\omega_{P(G)}(n,1)$. Let $G'$ be the contraction of $G$ with respect to vertex $n$. If no pair of vertices in $P(G)$ see each other with this $\delta$, then the same is true for $P(G')$. Since $G'$ is a cycle graph on $n-1$ vertices, our assumption indicates that its Dowker complex has a nontrivial $H^1$ cycle. Proposition \ref{prop:1sthom_equiv} indicates that under these assumptions $\FD_\delta(G')$ has a nontrivial $H^1$ cycle if and only if $\FD_\delta(G)$ has a nontrivial $H^1$ cycle, which proves our claim for $\FD_\delta(G)$.
\end{proof}

This result relates the persistence of the Dowker complex to a shortest cycle in $P(G)$. Nontrivial elements of $H_1$ correspond to shortest paths from a vertex to itself using edges with weight less than $\delta$, and die when $\delta$ allows a path with two edges from a vertex to itself. With this intuition in mind, we can use Proposition \ref{prop:bidirectionH1} to compute the exact interval for which the complex admits a nontrivial first homology.

\begin{prop}\label{prop:concycle}
    Let $G$ be a cycle graph. There exists a nontrivial equivalence class in the $H^1$ homology group of $\FD_\delta(G)$, for $\max_i \omega(i,i+1\pmod{n})\leq \delta< \min_{i\neq j}\max \{\omega(i,j),\omega(j,i)\}$.
\end{prop}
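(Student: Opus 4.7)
The plan is to derive this proposition as essentially a direct corollary of Proposition \ref{prop:bidirectionH1}, which already characterizes triviality of the first homology in terms of bidirectional visibility between vertices. The main work is to verify that the two bounds on $\delta$ in the statement correspond exactly to the hypotheses of that proposition, with care taken with strict versus non-strict inequalities.

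First, I would observe that the lower bound $\max_i \omega(i,i+1 \pmod{n}) \leq \delta$ is identical to the hypothesis of Proposition \ref{prop:bidirectionH1}. This guarantees that every graph edge of $G$ corresponds to a $1$-simplex in $\FD_\delta(G)$, and therefore (by Proposition \ref{prop:hom_to_bdry}) the outer cycle is available as a candidate $1$-chain representative of an $H_1$ class. With this bound satisfied, Proposition \ref{prop:bidirectionH1} applies and says: the first homology is trivial if and only if there exist two vertices $y_1, y_2$ with both $\omega_{P(G)}(y_1,y_2) < \delta$ and $\omega_{P(G)}(y_2,y_1) < \delta$. Taking the contrapositive, nontriviality of $H_1$ holds precisely when no such mutually visible pair of vertices exists.

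Next I would translate the upper bound $\delta < \min_{i \neq j}\max\{\omega(i,j),\omega(j,i)\}$ into exactly this non-visibility condition. If $\delta < \min_{i \neq j}\max\{\omega(i,j),\omega(j,i)\}$, then for every ordered pair $(i,j)$ with $i \neq j$ we have $\delta < \max\{\omega(i,j),\omega(j,i)\}$, meaning at least one of $\omega(i,j)$ or $\omega(j,i)$ strictly exceeds $\delta$. Hence no pair $(i,j)$ can satisfy both inequalities simultaneously, and the bidirectional visibility hypothesis of Proposition \ref{prop:bidirectionH1} fails. Invoking the contrapositive direction of that proposition then yields the existence of a nontrivial equivalence class in $H_1(\FD_\delta(G))$, as desired.

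The only subtlety I anticipate is bookkeeping between the edge weights of $G$ appearing in the statement of this proposition and the path-completed distances of $P(G)$ appearing in Proposition \ref{prop:bidirectionH1}. Since $\omega_{P(G)}(x,y) \leq \omega_G(x,y)$, one should note that replacing $\omega$ by $\omega_{P(G)}$ in the min-max expression would give an equivalent or sharper bound; in a consistently oriented cycle graph, however, the path-completed weight between two vertices already equals the sum of oriented edge weights along the unique directed path between them, so nothing is lost. Aside from this interpretational care, and the routine translation between strict and non-strict inequalities, the argument is essentially immediate once Proposition \ref{prop:bidirectionH1} is in hand.
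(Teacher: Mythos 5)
Your proposal is correct and takes essentially the same route as the paper: the paper likewise derives this proposition directly from Proposition \ref{prop:bidirectionH1}, reading the upper bound $\min_{i\neq j}\max\{\omega(i,j),\omega(j,i)\}$ as the first value of $\delta$ at which some pair of vertices becomes mutually visible, so that below it the bidirectional-visibility condition fails and $H_1$ is nontrivial. Your remark on the strict/non-strict inequalities and on $\omega$ versus $\omega_{P(G)}$ is careful bookkeeping the paper glosses over, but it does not change the argument.
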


\begin{proof}
    By Proposition \ref{prop:hom_to_bdry} we know that this homology group is trivial for $\delta$ below the lower bound. 

    In Proposition \ref{prop:bidirectionH1} we found that there exists a nontrivial element of the first homology group for $\delta$ above that range if and only if there do not exist $i$ and $j$ such that $\omega(i,j),\omega(j,i)<\delta$. The term $\min_{i\neq j}\max \{\omega(i,j),\omega(j,i)\}$ indicates the first value of $\delta$ at which this property would hold for some $i$ and $j$. If $\delta \geq \min_i \omega(i,i-1\pmod{n})$, then a vertex has low weight edges too all vertices of the graph and the dowker complex is fully connected with trivial first homology. Thus if $\delta$ satisfies the given inequalities, the first homology group has a nontrivial element.
\end{proof}

\begin{figure}[t]
    \begin{minipage}{.5\textwidth}
        \centering
        \includegraphics[width=\linewidth]{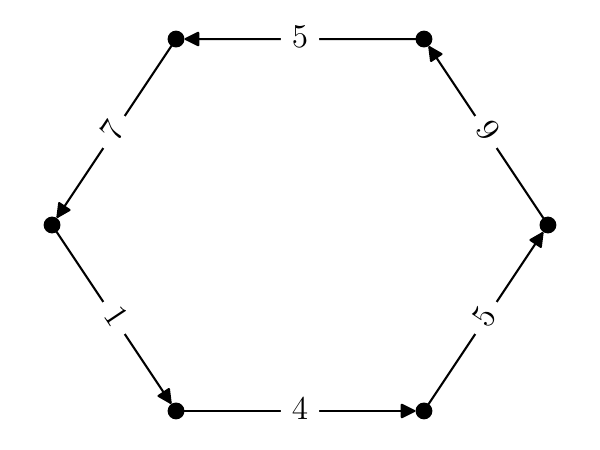}
    \end{minipage}%
    \begin{minipage}{0.5\textwidth}
        \centering
        \includegraphics[width=\linewidth]{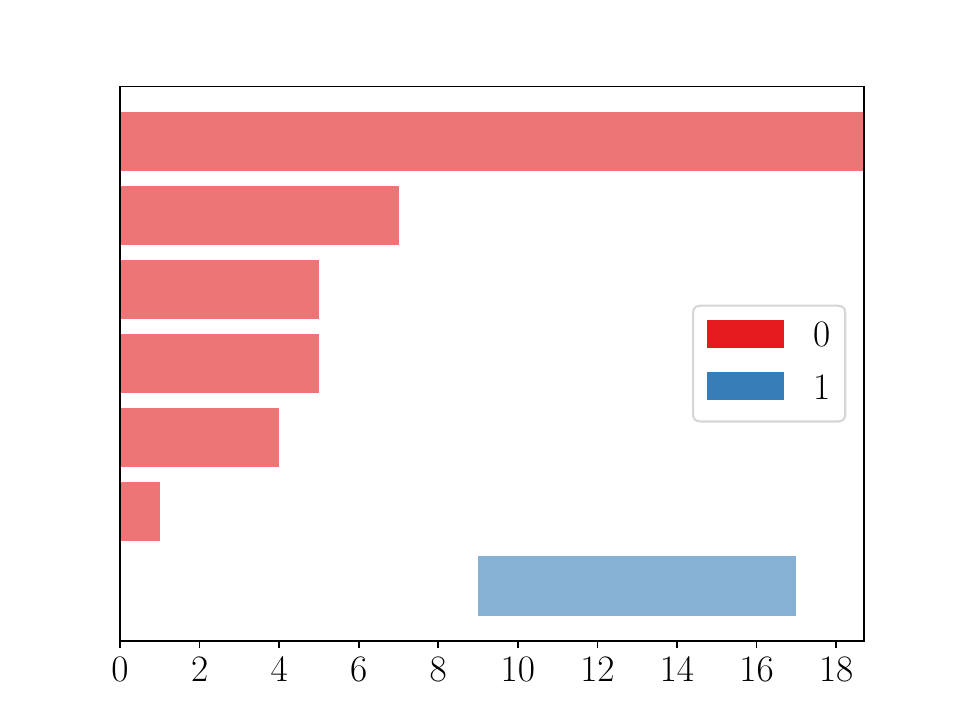}
    \end{minipage}
    \caption{Example of a weighted, consistently-oriented cycle with six vertices (left) and its corresponding barcode (right). The cycle in the first homology is born at the maximimum edge weight, $9$. It dies when two vertices can see each other for the first time, at $17$. There are two such pairs in this graph, the top right and bottom right, or the top left and right-most, each of which are separated by a weight of 5 + 9 = 14, and 5 + 7 + 1 + 4 = 17, and 17 is the minimum number for which this is true. }
    \label{fig:weighted_consistent_hex}
\end{figure}



\begin{figure}[t]
    \begin{minipage}{.5\textwidth}
        \centering
        \includegraphics[width=0.8\linewidth]{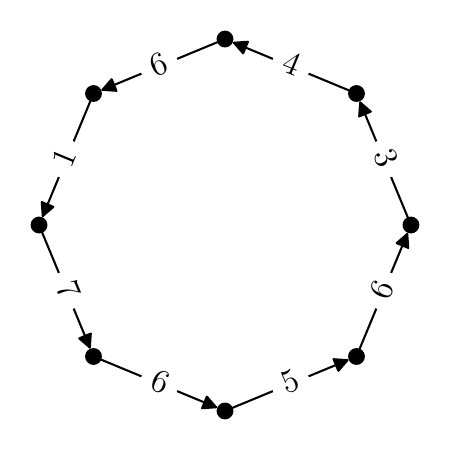}
    \end{minipage}%
    \begin{minipage}{0.5\textwidth}
        \centering
        \includegraphics[width=\linewidth]{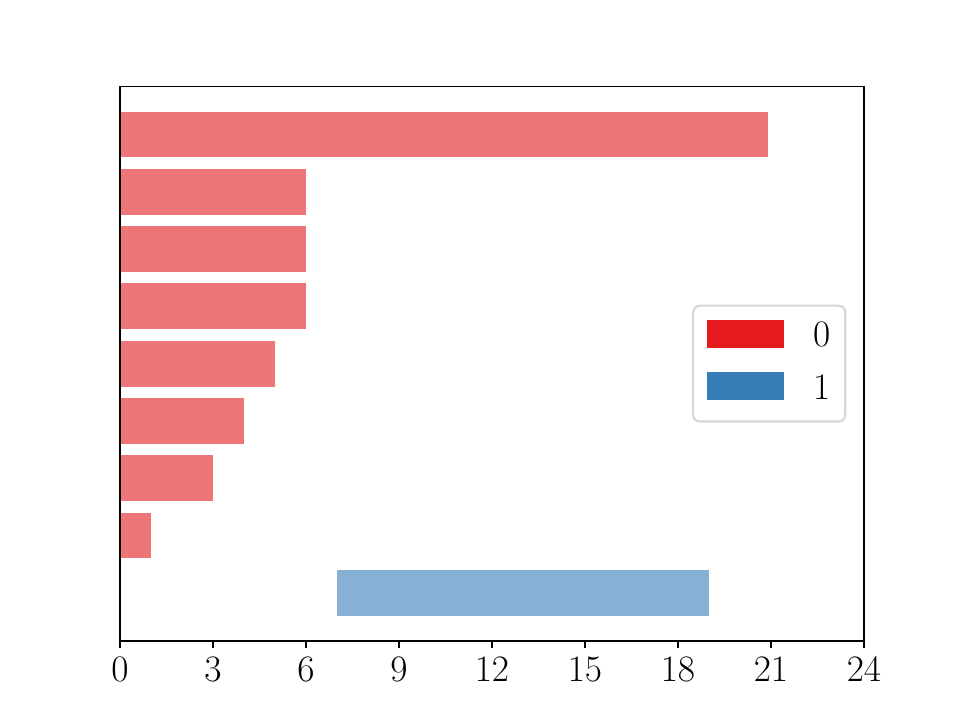}
    \end{minipage}
    \caption{Example of a weighted, consistently-oriented cycle with eight vertices (left) and its corresponding barcode (right). The cycle in the first homology is born at the maximum edge weight, $7$. It dies when two vertices can see each other for the first time, at $19$. In this case, the top left and bottom right vertices are distance $19$ from one another, and close the cycle at that time.}
    \label{fig:weighted_consistent_oct}
\end{figure}


We can see this result exemplified in Figure \ref{fig:weighted_consistent_hex} and Figure \ref{fig:weighted_consistent_oct}.

We note that if no such $\delta$ exists, then there exists a vertex able to see the entirety of the graph with a $\delta$ smaller than the maximum edge weight of $G$. This means that a closure condition is met before a nontrivial cycle is able to form in the filtration and the first homology class remains trivial for the entire filtration. An example of this can be seen in Figure \ref{fig:weighted_consistent_oct_largeEge}.

\begin{figure}[t]
    \begin{minipage}{.5\textwidth}
        \centering
        \includegraphics[width=0.8\linewidth]{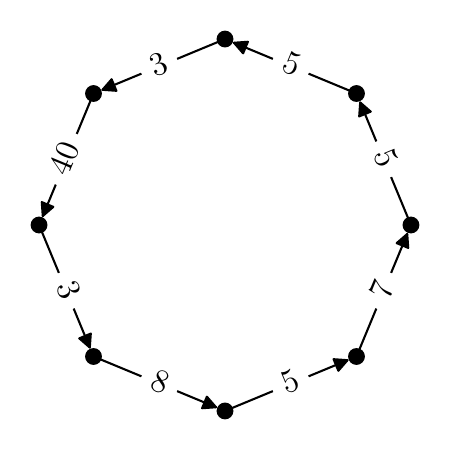}
    \end{minipage}%
    \begin{minipage}{0.5\textwidth}
        \centering
        \includegraphics[width=\linewidth]{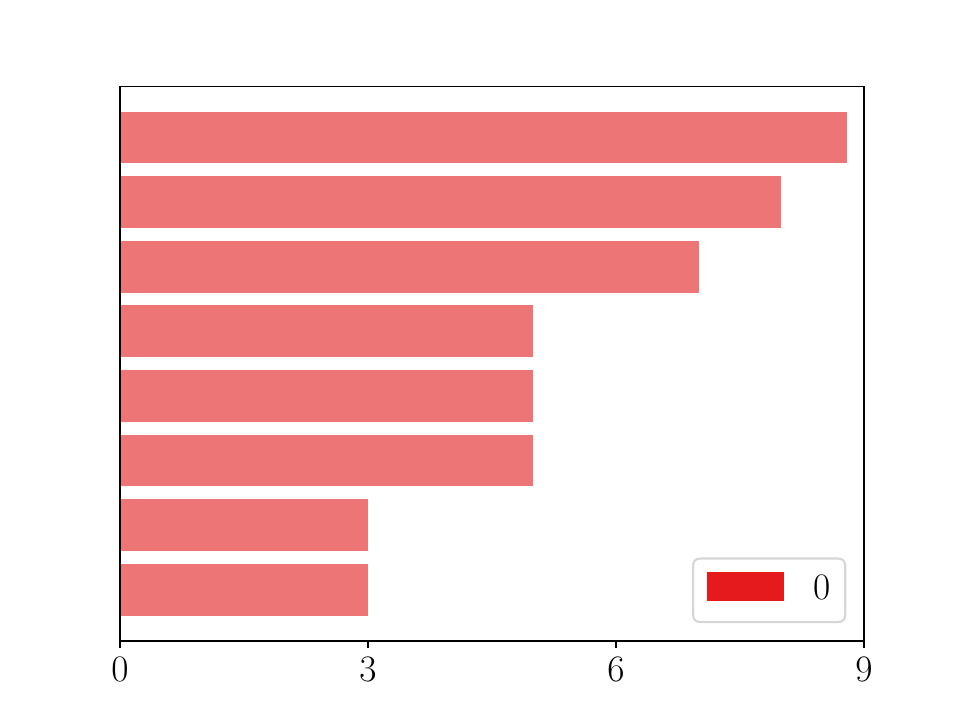}
    \end{minipage}
    \caption{Example of a weighted, consistently-oriented cycle with eight vertices, including one large edge weight, (left) and its corresponding barcode (right). There is no 1-homology because it closes before the persistence clears the maximum edge weight.}
    \label{fig:weighted_consistent_oct_largeEge}
\end{figure}

\subsubsection{Inconsistently Oriented Cycle Graphs}

Now that we've characterized the persistence of consistently oriented cycles, we can use some of the same tools to describe the persistence of inconsistently oriented cycles. In a consistently oriented cycle, the total symmetry is reflected in the fact that every vertex forms a dominating set. This is not the case for inconsistently oriented cycle graphs. In these graphs, minimal dominating sets are uniquely the set of sources in the graph; that is vertices whose finite weight edges point outward. We leverage these unique minimal dominating sets to describe reductions with the nerve. There are three cases to consider. The first is if there are more than two elements of the dominating set, followed by two, then one. Addressing the first case, we show in Proposition \ref{prop:inconcycledom3} when in the filtration the first homology gains a nontrivial element, and that the graph is not path connected enough to close it.

\begin{figure}
    \centering
    \includegraphics[width=0.8\linewidth]{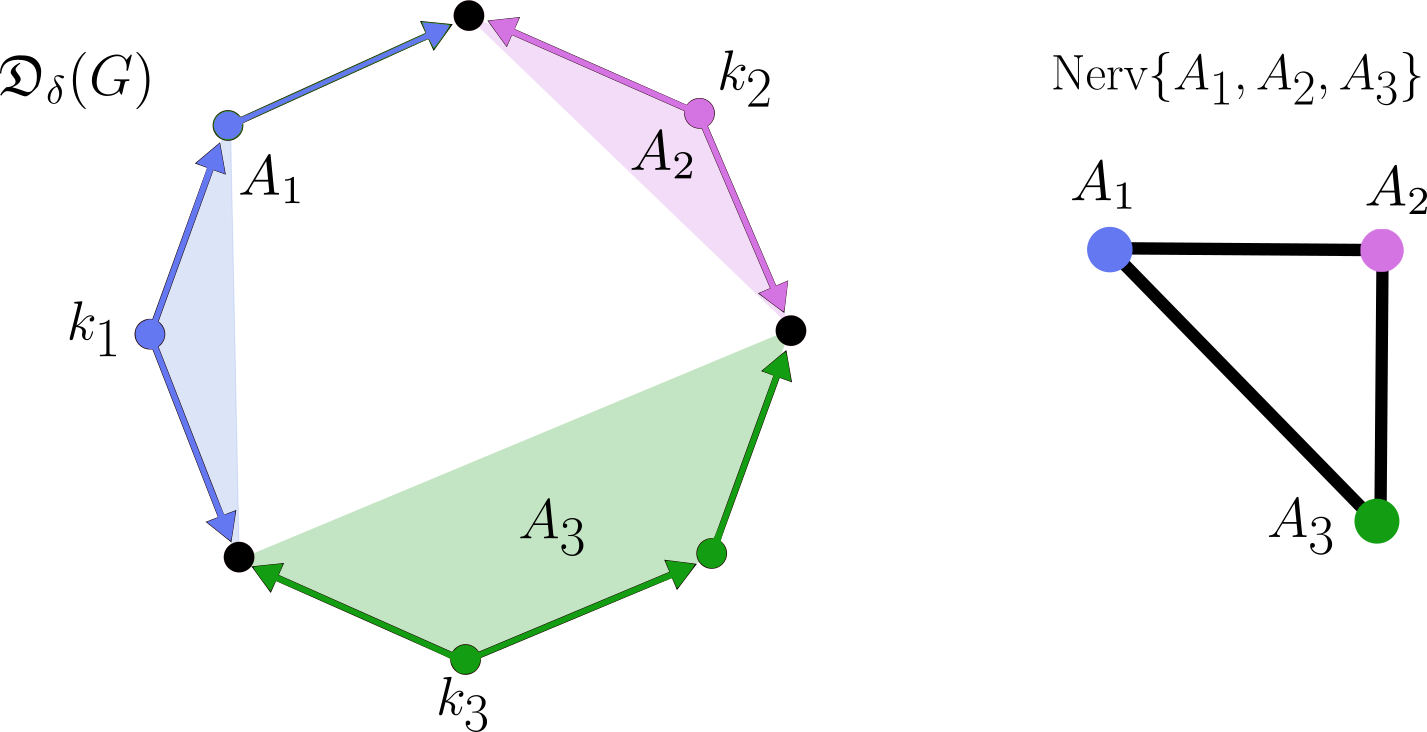}
    \caption{This figure exemplifies the argument of Proposition \ref{prop:inconcycledom3}. On the left we see a cycle graph overlayed with the simplices of its associated Dowker complex at $\delta$. Using the three colors we depict the cover $\{A_1,A_2,A_3\}$. On the right the associated nerve is shown, exemplifying their shared homology.}
\end{figure}

\begin{prop}\label{prop:inconcycledom3}
    Let $G$ be an inconsistently oriented cycle graph. Let $K$ be a minimal dominating set of $P(G)$. If $|K|\geq 3$ then the $1$st homology class of $\FD_\delta(G)$ has one nontrivial element for all $\delta\geq\max_i\omega(i,i+1 \pmod{n})$, and trivial homology otherwise. 
\end{prop}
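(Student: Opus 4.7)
The argument splits naturally at the threshold $\delta^{*} := \max_i \omega(i, i+1 \pmod{n})$.

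For $\delta < \delta^{*}$, I would use Proposition~\ref{prop:hom_to_bdry}: every nontrivial class of $H_1$ is representable by a $1$-chain supported on simplices coming from edges of $G$, and in a cycle graph the only graph cycle available is the outer cycle, which requires every $1$-simplex $[i, i+1 \pmod{n}]$ to be present. When $\delta < \delta^{*}$ at least one of these simplices is absent from $\FD_\delta(G)$, so no such $1$-chain is available and $H_1 = 0$.

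For $\delta \geq \delta^{*}$, I would decompose the complex along the alternating source--sink structure. Label the sources as $v_1,\dots,v_k$ (these are exactly the minimum dominating set $K$, with $k \geq 3$) and the intervening sinks as $s_1,\dots,s_k$, cyclically ordered. Let $T_i$ denote the arc from $s_{i-1}$ through $v_i$ to $s_i$, and let $A_i$ denote the subcomplex of $\FD_\delta(G)$ spanned by the vertices in $T_i$. Since in an inconsistently oriented cycle every forward reachability set $N_\delta(y)$ is trapped between the two sinks flanking $y$'s segment, each such set lies inside some $T_i$, so every simplex of $\FD_\delta(G)$ lies in some $A_i$. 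Hence $\FD_\delta(G) = A_1 \cup \cdots \cup A_k$ with $A_i \cap A_{i+1} = \{s_i\}$ and all non-adjacent intersections empty.

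The key step, and main obstacle, is showing that each $A_i$ is contractible. The subcomplex $A_i$ coincides with the Dowker complex $\FD_\delta(G[T_i])$, where the underlying undirected graph of $G[T_i]$ is a path whose unique source is $v_i$. I would cover $A_i$ by the out-neighborhood simplices $\{[N_\delta(y)]\}_{y \in T_i}$; each is a simplex (contractible), and pairwise or higher intersections are simplices (contractible when non-empty). Each $N_\delta(y)$ is a sub-arc of the linear vertex set $T_i$, and arcs on a line satisfy Helly's property, so this is a good cover of an interval-shaped contractible space, whose nerve is therefore contractible. The nerve theorem then gives $A_i \simeq \{*\}$.

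With each $A_i$ contractible and $\{A_1,\dots,A_k\}$ intersecting cyclically only in single vertices, one final application of the nerve theorem (or an iterated Mayer--Vietoris computation) identifies $\FD_\delta(G)$ with the nerve of this cover, which is a cycle graph on $k$ vertices and hence homotopy equivalent to $S^1$. This yields $H_1(\FD_\delta(G)) \cong \mathbb{Z}$, i.e., exactly one nontrivial class, as required. I anticipate the contractibility of $A_i$ at intermediate $\delta$ to be the main technical difficulty, since the dominating-set results of Section~\ref{sec:dom_set_persistence} were stated only for the maximal complex $\delta_\mathrm{max}$; handling intermediate $\delta$ requires the Helly-type nerve argument above rather than a direct appeal to Proposition~\ref{prop:dominantnerve}.
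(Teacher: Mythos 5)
Your proposal is correct and follows essentially the same route as the paper: cover $\FD_\delta(G)$ by the subcomplexes associated to the arcs around each source vertex of the dominating set, observe that adjacent pieces meet only in a sink vertex and non-adjacent pieces are disjoint, and apply the nerve theorem to obtain a $k$-cycle, hence one nontrivial $H_1$ class (with the $\delta<\max_i\omega(i,i+1)$ case handled by Proposition~\ref{prop:hom_to_bdry} in both arguments). Your Helly-type interval argument for the contractibility of each arc piece $A_i$ is in fact more careful than the paper's proof, which only asserts connectedness of each $A_k$ before invoking the nerve theorem.
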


\begin{proof} 
Below the lower bound, the graph $G$ does not contain a cycle with edge weights less than $\delta$, ignoring orientation. By Proposition \ref{prop:hom_to_bdry} it follows the complex has trivial first homology in this case. This Proposition also guarantees that there is at most one first homology class.

Let $\delta\geq\max_i\omega(i,i+1 \pmod{n})$. For all $k\in K$, let $A_k =\cup_{i\in \Gamma_k} U_i$ where $\Gamma_k=\{i | \omega_{P(G)}(k,i)<\infty\}$ is the set of vertex indices which vertex $k$ sees with finite weight. The set $A_k$ is connected since $\delta$ is greater than the maximum sequential edge weight, and if $k$ sees a vertex, there exists a path with edges of weight less than $\delta$ to it in $G$. 

The set $\{A_k\}_{k\in K}$ forms a cover by the fact that $K$ is a dominating set. Since $|K|\geq 3$ it follows that each $A_k$ is only incident to the cover elements on either side through a sink vertex (the orange vertices in Figure \ref{fig:weighted_everyOther}). Thus Nerv$\{|A_k| | k\in K\}$ is a cycle of $1-$simplices with $|K|$ vertices. By the nerve theorem, this is homotopy equivalent to $\FD_\delta(G)$, concluding the proof.

\end{proof}

\begin{figure}[t]
    \begin{minipage}{0.5\textwidth}
        \centering
        \includegraphics[width=0.8\linewidth]{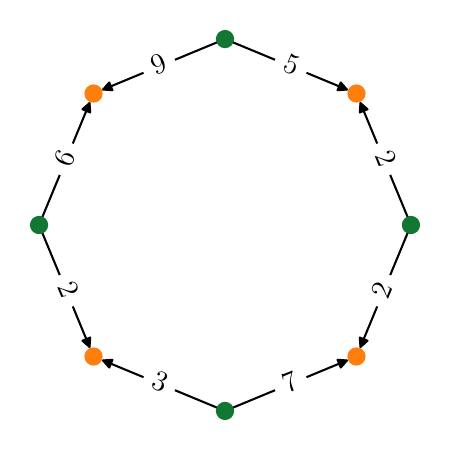}
    \end{minipage}%
    \begin{minipage}{0.5\textwidth}
        \centering
        \includegraphics[width=\linewidth]{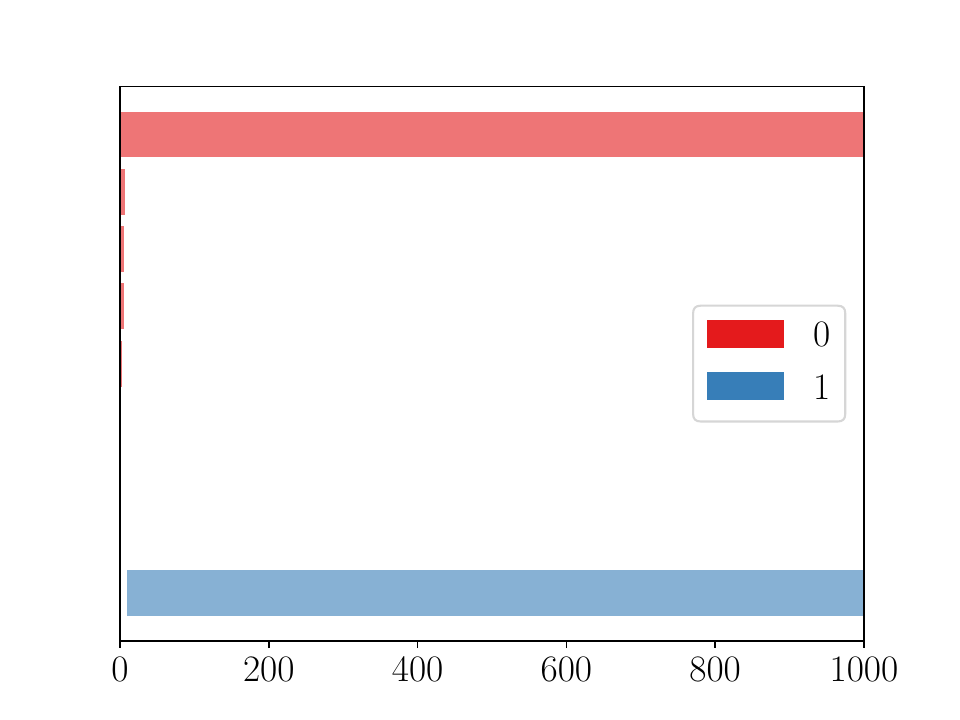}
    \end{minipage}
    \caption{Example of a weighted graph with alternating sources and sinks (left) and its corresponding barcode (right). The source and sink nodes are indicated in green and orange, respectively, in addition to the appropriate arrow orientations. After it is born, the maximal complex persists indefinitely because the graph is not well connected enough to close it with such a large minimal dominating set.}
    \label{fig:weighted_everyOther}
\end{figure}

In Figure \ref{fig:weighted_everyOther}, the 1D-homology never dies because there are at least three elements in the smallest dominating set. Proposition \ref{prop:inconcycledom3} tells us that with more than three dominating set elements, the cycle graph is too disconnected to ever close the first homology class. When the dominating set is less than $3$, the first homology is able to close, but we are still able to leverage the unique minimal dominating set to reduce the complex. We start with the size $2$ dominating set.

\begin{figure}
    \centering
    \includegraphics[width=0.85\linewidth]{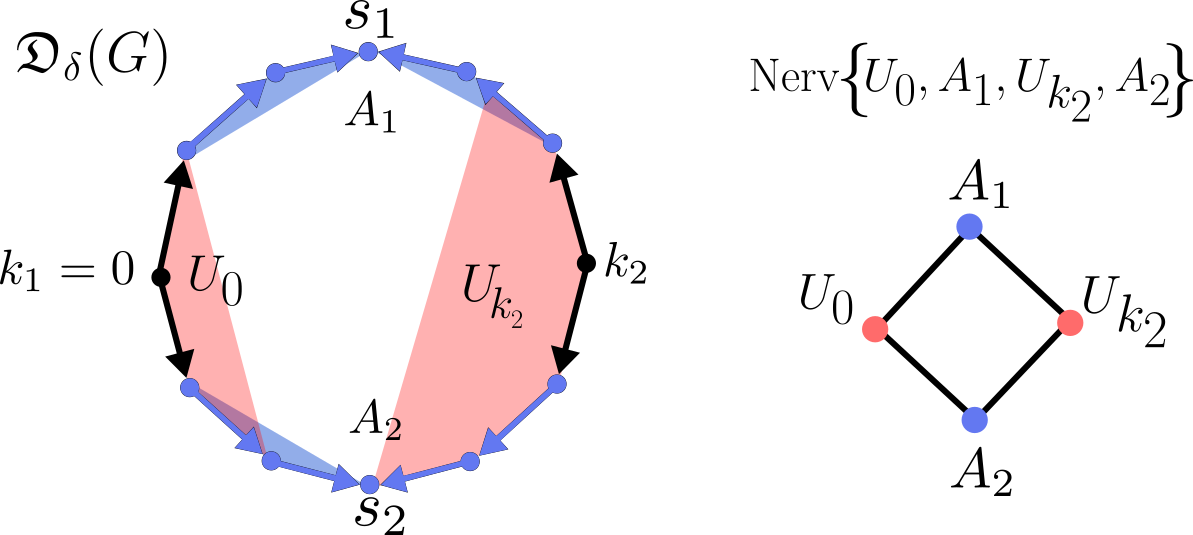}
    \caption{This figure exemplifies the argument in the proof of Proposition \ref{prop:inconcycledom2}. On the left we see a cycle graph dominating set size $2$, overlayed with the simplices of its associated Dowker complex at $\delta$. On the left is the cycle graph $G$ with overlayed shaded regions corresponding to the maximal simplices of $\FD_\delta(G)$. The vertices $k_1,k_2$ form a minimal dominating set of $P(G)$. The red color distinguishes $U_0$ and $U_{k_2}$ from the other elements of the cover $A_1,A_2$, whose components are in blue. On the right Nerv$\{U_0,A_1,U_{k_2},A_2\}$ is depicted, exemplifying the reduction.}
\end{figure}

\begin{prop}\label{prop:inconcycledom2}
    Let $G$ be an inconsistently oriented cycle graph. Let $K$ be a minimal source dominating set of $P(G)$. If $K=\{k_1,k_2\}$ then the $1$st homology class of $\FD_\delta(G)$ has one nontrivial element for all $\max_i\omega(i,i+1\pmod{n})\leq\delta\leq \max\{\omega(k_1,s_1),\omega(k_1,s_2),\omega(k_2,s_1),\omega(k_2,s_2)\}$ where $s_1,s_2$ are the sink vertices.
\end{prop}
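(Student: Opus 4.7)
The plan is to establish nontriviality of $H_1(\FD_\delta(G))$ via a Mayer--Vietoris argument on a two-element good cover tailored to the two-source structure. Uniqueness of the class is already handled: by Proposition \ref{prop:hom_to_bdry}, every nontrivial element of $H_1$ is supported on a graph cycle of $G$, and since the underlying graph is a single cycle, $H_1(\FD_\delta(G))$ is at most one-dimensional; it vanishes below $\max_i\omega(i,i+1\pmod n)$ because a cycle edge is missing from the $1$-skeleton.

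For each arc $a_{ij}$ running from source $k_i$ to sink $s_j$, I will let $T_{ij}$ be the full subcomplex of $\FD_\delta(G)$ spanned by the vertices of $a_{ij}$, and set
\[
B_i \;=\; U_{k_i}\cup T_{i1}\cup T_{i2}, \qquad i=1,2,
\]
where $U_{k_i}$ is the simplex on the out-$\delta$-neighborhood $\Gamma_{k_i}$. Every simplex of $\FD_\delta(G)$ has a generator $v$ that is either a source (so the simplex lies in $U_{k_i}\subseteq B_i$), a sink (giving only a $0$-simplex), or an interior vertex of some arc $a_{ij}$; in the last case the one-way structure of the arc forces $\Gamma_v\subseteq a_{ij}$, placing the simplex in $T_{ij}\subseteq B_i$. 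Thus $B_1\cup B_2=\FD_\delta(G)$.

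Contractibility of each $B_i$ will follow from the nerve theorem applied to $\{U_{k_i},T_{i1},T_{i2}\}$: the intersection $T_{i1}\cap T_{i2}$ is the singleton $\{k_i\}$, each $U_{k_i}\cap T_{ij}$ is a full simplex on $\Gamma_{k_i}\cap a_{ij}$, the triple intersection is $\{k_i\}$, and each $T_{ij}$ is itself contractible by a standard collapse along its linearly ordered vertex set. For $B_1\cap B_2$, the fact that arcs from different sources share only their common sink endpoints gives $T_{1j}\cap T_{2j}=\{s_j\}$ and $T_{1j}\cap T_{2j'}=\emptyset$ when $j\neq j'$; the only simplex that could extend beyond these isolated vertices is the edge $[s_1,s_2]$, which lies in $B_i$ precisely when $\{s_1,s_2\}\subseteq\Gamma_{k_i}$. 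When $\delta$ is strictly below $\max_{i,j}\omega(k_i,s_j)$, some source fails to reach some sink, so $B_1\cap B_2=\{s_1\}\sqcup\{s_2\}$ is two disjoint points.

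The relevant piece of the Mayer--Vietoris sequence then reduces to
\[
0 \to H_1(\FD_\delta(G)) \to H_0(B_1\cap B_2) \xrightarrow{\,\partial\,} H_0(B_1)\oplus H_0(B_2),
\]
with $H_0(B_1\cap B_2)\cong\RR^2$, $H_0(B_i)\cong\RR$, and $\partial$ sending each of $[s_1],[s_2]$ to $(1,-1)$. The kernel is the line spanned by $[s_1]-[s_2]$, producing the unique $H_1$ class throughout the stated range. The main obstacle will be the bookkeeping around contractibility of the arc subcomplexes $T_{ij}$ and the verification that the two-element cover truly exhausts $\FD_\delta(G)$; both rely on the fact that a non-source interior vertex of an arc can never reach past the downstream sink of that arc, so its generated simplex is confined to a single $T_{ij}$.
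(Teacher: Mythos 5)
Your argument is correct, and it reaches the same conclusion by a genuinely different decomposition. The paper covers $\FD_\delta(G)$ by four pieces organized by position on the cycle --- the two source neighborhoods $U_{k_1},U_{k_2}$ together with the unions $A_1,A_2$ of out-neighborhoods of the vertices lying between the sources --- and applies the nerve theorem once, reading the homology class off an unfilled square in the nerve that gets filled exactly when both triple intersections through the sinks become nonempty. You instead cover by the two ``source halves'' $B_i=U_{k_i}\cup T_{i1}\cup T_{i2}$ and run Mayer--Vietoris, so the class materializes as $[s_1]-[s_2]$ in the connecting map out of the two-point intersection. Both routes rest on the same structural facts (interior arc vertices only see downstream within their own arc; only sources see both sinks; only sinks are seen by both sources), but yours buys two things: the death time is transparent, since $B_1\cap B_2$ stops being two points precisely when $[s_1,s_2]$ enters $U_{k_1}\cap U_{k_2}$, i.e.\ at $\delta=\max_{i,j}\omega(k_i,s_j)$, and the rank of $H_1$ comes out as exactly one from the exact sequence without needing Proposition \ref{prop:hom_to_bdry} to cap it. The paper's version, in exchange, keeps the argument in the same single-nerve format used for the $|K|\geq 3$ and $|K|=1$ cases. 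Two small points to tidy: the contractibility of each arc subcomplex $T_{ij}$ does need the hypothesis $\delta\geq\max_i\omega(i,i+1\pmod n)$ (otherwise $T_{ij}$ can be disconnected and the nerve argument for $B_i$ fails), so state that you are using it there; and note that, like the paper's own proof, you actually establish nontriviality on the half-open interval with strict inequality at the upper endpoint, while the proposition as stated writes $\leq$.
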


\begin{proof}
    Following the assumption, let $K=\{k_1,k_2\}$. Since $G$ is a cycle graph it follows that there must exist two sink vertices, $s_1,s_2$, which are each seen by $k_1$ and $k_2$. Without loss of generality, assume that $0=k_1<s_1<k_2<s_2$.

    Let $A_1=\cup_{0<i<k_2}U_i$ and $A_2=\cup_{k_2<i\leq n}U_i$. We show that $\FD_\delta(G)$ is homotopy equivalent to Nerv$\{U_0,A_1,U_{k_2},A_2\}$, and that this nerve has a nonbounding cycle precisely when $\delta$ lies in the specified region.

    By construction, we know that $U_0\cup A_1\cup U_{k_2}\cup A_2=\cup_{0\leq i\leq n}U_i=\FD_\delta(G)$, implying that this set forms a proper cover.

    by construction $A_1$ and $A_2$ are disjoint because all vertices $0<i<k_2$ have no directed paths to the vertices $k_2<j\leq n$, and vice versa. At the same time $U_0\cap U_{k_2}\subseteq\text{Cl}([s_1,s_2])$ because $s_1$ and $s_2$ are the only vertices which both source vertices can see in $P(G)$ for any positive weight.

    If we consider another intersection, we see
    \begin{align*}
        U_0\cap A_1 
        &=
        U_0\cap \left(\cup_{0<i<k_2}U_i\right)
        \\
        &=
        \cup_{0<i<k_2} 
        \left(
        U_0\cap U_i
        \right)
        \\
        &=
        \cup_{0<i<s_1} 
        \left(
        U_0\cap U_i
        \right)\\
        &=U_0\cap U_1
    \end{align*}
    because the union of out neighborhood 
    intersections is over a directed path, implying its components are nested simplices. This resulting term is a simplex and thus contractible. Similar reasoning finds that $U_0\cap A_2$, $U_{k_2}\cap A_1$, and $U_{k_2}\cap A_2$ are contractible. The lower bound on $\delta$ also guarantees that they are nonempty.
    
    By the nerve theorem, we get the desired homotopy equivalence. From the nonempty intersections described in the previous paragraph, we find that this nerve contains a $4$ vertex cycle. Whether this cycle is filled with two simplices or not depends on the other intersections. Since $A_1\cap A_2=\emptyset$, it follows that $A_1\cap A_2\cap U_0=A_1\cap A_2\cap U_{k_2}=\emptyset$. Thus the $4-$cycle in the nerve is filled if and only if $A_1\cap U_{k_2}\cap U_0$ and $A_2\cap U_{k_2}\cap U_0$ are nonempty. from our previous conclusions from the structure of the underlying graph this means $A_1\cap U_{k_2}\cap U_0=[s_1]$ and $A_2\cap U_{k_2}\cap U_0=[s_2]$. This implies $\omega(0,s_1),\omega(0,s_2),\omega(k_2,s_1),\omega(k_2,s_2)\leq\delta$. Thus we may conclude that the complex has a nontrivial element in the first homology if and only if
    \begin{align*}
        \max_{0\leq i\leq n}\omega(i,i+1\pmod{n})
        &\leq 
        \delta
        <
        \max\{\omega(0,s_1),\omega(0,s_2),\omega(k_2,s_1),\omega(k_2,s_2)\}.
    \end{align*}
    Since the source vertices are the vertices furthest away from the sinks, the above reduces to the desired inequality.    
\end{proof}

\begin{figure}[t]
    \begin{minipage}{0.5\textwidth}
        \centering
        \includegraphics[width=0.8\linewidth]{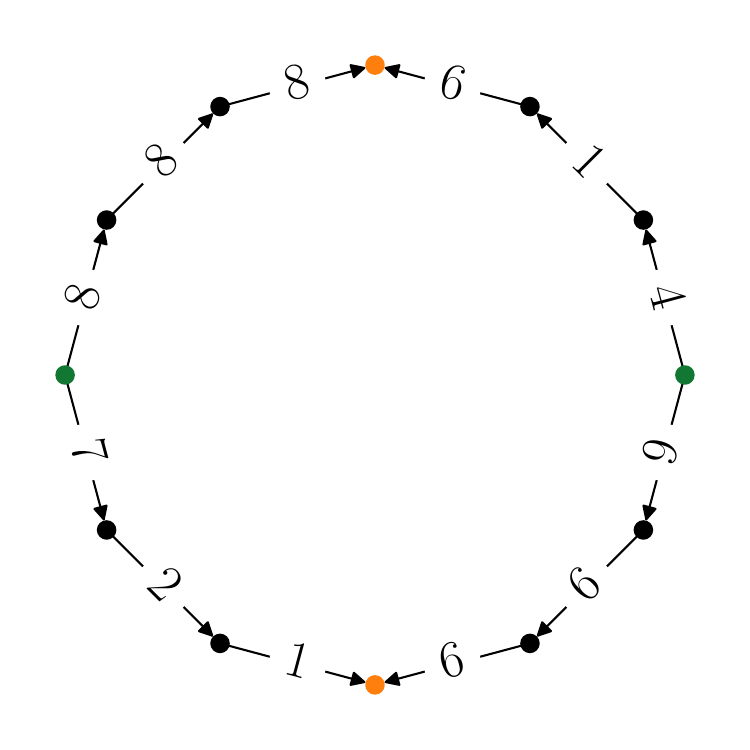}
    \end{minipage}%
    \begin{minipage}{0.5\textwidth}
        \centering
        \includegraphics[width=\linewidth]{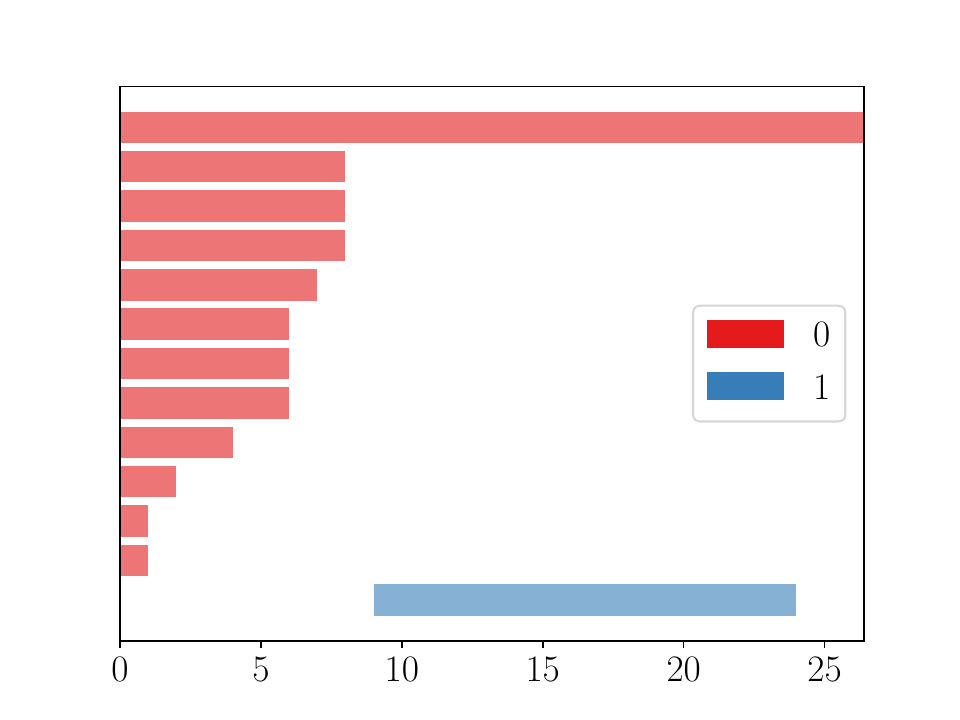}
    \end{minipage}
    \caption{Example of a weighted 12-vertex graph with two sources and two sinks (left) and its corresponding barcode (right). The source and sink vertices are indicated in green and orange, respectively, in addition to the appropriate arrow orientations. The first homology class is born at the maximum edge weight of $9$, and dies at the maximum weight of a path from source to sink, $24$. In this case, the top left quarter of the cycle.}
    \label{fig:weighted_twoSource}
\end{figure}


The separation of the sink and source vertices allows for the above analysis. See the results exemplified in Figure \ref{fig:weighted_twoSource}. In the case of a single vertex dominating set, there are two directed paths from a source vertex to a sink vertex, which requires more case work.

\begin{figure}
    \centering
    \includegraphics[width=0.8\linewidth]{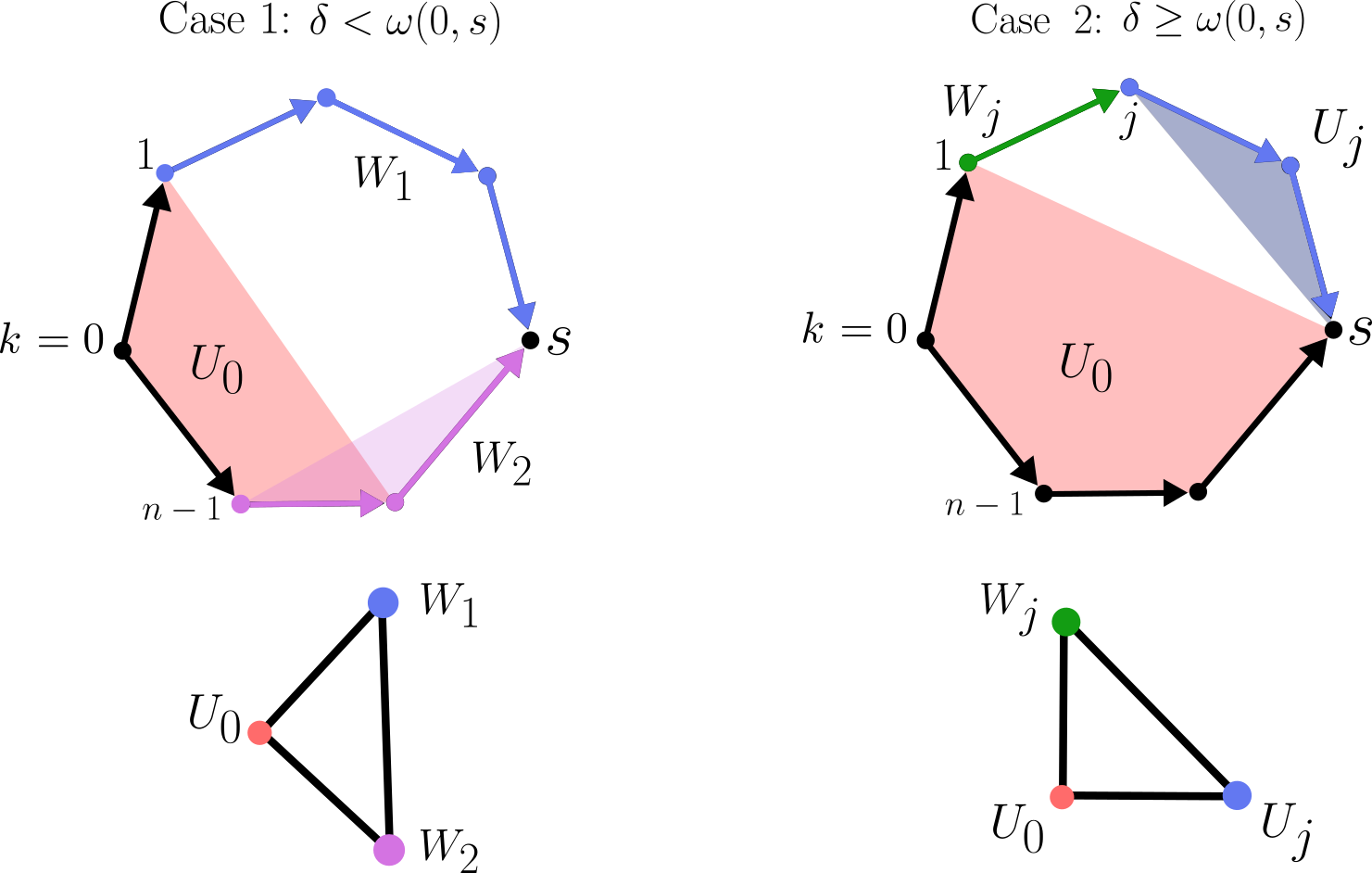}
    \caption{This figure exemplifies the two cases presented in the proof of Proposition \ref{prop:inconcycledom1}. \textbf{Case 1} shows when no weighted path from $k=0$ to $s$ exists with weight less than $\delta$. In this case the nerve is constructed with the cover $\{U_0,W_1,W_2\}$. \textbf{Case 2} shows the case in which one directed path from $k=0$ is shorter than the other, leaving the homology until both halves close. In this case $U_j$ and $W_j$ are defined create the cover $\{U_0,U_j,W_j\}$ whose nerve is depicted below.}
\end{figure}

\begin{prop}\label{prop:inconcycledom1}
    Let $G$ be an inconsistently oriented cycle graph. Let $K$ be a minimal source dominating set of $P(G)$. If $|K|= 1$ then there exists one source $k\in K$ and one sink $s$. In this case the $1$st homology class of $\FD_\delta(G)$ has one nontrivial element for all $\max_i\omega(i,i+1\pmod{n})\leq\delta\leq\max\{\min_{i<s}\max\{\omega(k,s),\omega(k,i),\omega(i,s)\}
    ,
    \min_{s<i<n}\max\{\omega(k,s),\omega(k,i),\omega(i,s)\}\}$.
\end{prop}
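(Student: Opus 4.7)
The plan is to apply Mayer--Vietoris to the decomposition $\FD_\delta(G) = U_0 \cup L$, where $U_0 = \text{Cl}([N(0)])$ is the simplex on the source's reachable set and $L = \bigcup_{i \neq 0} U_i$ is the rest, and to reduce the computation of $H_1$ to counting connected components of $U_0 \cap L$.

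After labeling the unique source as $k = 0$, the graph has a unique sink $s$ and two directed arcs from $0$ to $s$ of total weights $W_1, W_2$, so $\omega_{P(G)}(0, s) = \min(W_1, W_2)$. By Proposition \ref{prop:hom_to_bdry}, $H_1 = 0$ for $\delta < \max_i \omega(i, i+1)$ (no underlying graph cycle fits below $\delta$), and $\dim H_1 \leq 1$ otherwise, so I restrict to the latter range. First I would verify that both $U_0$ and $L$ are contractible: $U_0$ is a single simplex, and for $L = A_1 \cup A_2$ with $A_j = \bigcup_{i \text{ on arc } j \text{ interior}} U_i$, each $A_j$ is assembled by successively attaching simplices $U_i$ along intersections $U_i \cap \bigcup_{i' < i} U_{i'}$ that are themselves faces of $U_i$ (by the linear arc structure); hence each $A_j$ is contractible, and since $A_1 \cap A_2 = \{[s]\}$ is a single vertex (using $\delta \geq \max_i \omega(i, i+1)$ to place $s$ in both), $L$ is a wedge of contractibles, hence contractible. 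Mayer--Vietoris then yields $H_1(\FD_\delta(G)) \cong \mathbb{Z}^{c - 1}$, where $c$ is the number of connected components of $U_0 \cap L$.

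To determine $c$, observe that for $\sigma \subseteq \{1, \ldots, j_1^*\}$ with $j_1^* = \max\{i \leq s-1 : \omega(0, i) \leq \delta\}$, one checks $\sigma \subseteq N(\min \sigma) \cap N(0)$, so $\sigma \in U_0 \cap L$; thus the arc-1 interior part of $U_0 \cap L$ is the full simplex on $\{1, \ldots, j_1^*\}$ (hence connected), and similarly for arc 2. The vertex $s$ lies in $U_0 \cap L$ exactly when $\omega(0, s) \leq \delta$, and bridges to the arc-$j$ interior iff some $i$ on arc $j$ satisfies $\max\{\omega(0, i), \omega(i, s)\} \leq \delta$ (giving the edge $[i,s] \in U_0 \cap L$). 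A case analysis completes the argument: if $\omega(0, s) > \delta$ the two interiors are disjoint and $c = 2$, consistent since $C_j \geq \omega(0, s) > \delta$; otherwise, WLOG $W_1 \leq W_2$, so the arc-1 bridge is automatic via $[s-1, s]$ (using $\delta \geq \omega(s-1, s)$), while the arc-2 bridge exists iff $\min_{i : s < i < n} \max\{\omega(0, i), \omega(i, s)\} \leq \delta$, which combined with $\omega(0, s) \leq \delta$ is exactly $C_2 \leq \delta$; the symmetric case $W_2 \leq W_1$ produces $C_1 \leq \delta$. Hence $c = 1$ iff $\max(C_1, C_2) \leq \delta$, establishing the persistence range.

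The main obstacle is the component-counting case analysis, in particular verifying contractibility of each arc subcomplex via face-attachment and then navigating the asymmetric subcases ($W_1 \leq \delta < W_2$ versus the swap versus $\delta < \omega(0,s)$) to match each bridge condition to the inequality $C_j \leq \delta$.
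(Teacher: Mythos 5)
Your proposal is correct, and it takes a genuinely different route from the paper's. The paper proves this proposition with the nerve theorem, splitting into two cases on whether $\delta<\omega(k,s)$: below that threshold it covers $\FD_\delta(G)$ by $U_0$ and the unions of out-neighborhoods along each arc, and above it it builds a cover $\{U_0,W_j,U_j\}$ around a carefully chosen vertex $j$ on the longer arc; in each case the nerve is a triangle and the question is whether the triple intersection fills it. You instead use a single Mayer--Vietoris decomposition $\FD_\delta(G)=U_0\cup L$ valid throughout the range $\delta\geq\max_i\omega(i,i+1)$, verify both pieces are contractible (the face-attachment argument along each arc and the wedge-at-$[s]$ observation are both sound), and reduce everything to $\tilde H_0(U_0\cap L)$, so the case analysis lives entirely in deciding which of the two arc simplices the vertex $s$ is bridged to. This buys uniformity -- one decomposition replaces the paper's two covers -- and the death time $\max\{C_1,C_2\}$ falls out directly as the first $\delta$ at which $U_0\cap L$ becomes connected. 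Two small points to tighten: the automatic arc-1 bridge $[s-1,s]$ needs $\omega(0,s-1)\leq\delta$ in addition to $\omega(s-1,s)\leq\delta$ (it holds because $\omega(0,s-1)\leq\omega(0,s)\leq\delta$ on the shorter arc, so say so), and you should flag the degenerate case where one arc has no interior vertices, where $L$ is a single arc complex and the wedge step is vacuous. Finally, as in the paper's own argument, the class actually dies at $\delta=\max\{C_1,C_2\}$, so the interval of nontriviality is half-open at the top despite the $\leq$ in the statement.
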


\begin{proof}
    Since $G$ is a cycle graph with one source and one sink, it must consist of two directed paths joining vertices $k$ and $s$ which are disjoint outside of their endpoints. By symmetry, assume without loss of generality that $k=0$. We proceed with cases.

    (Case 1)
    Suppose $\delta<\omega(k,s)$. Let $W_1=U_1\cup \cdots\cup U_{s-1}$ and $W_2=U_{n-1}\cup \cdots \cup U_{s+1}$. We prove that $\FD_\delta(G)$ is homotopy equivalent to Nerv$\{W_1,W_2,U_0\}$.

    By construction and the assumption on $\delta$,  $W_1\cap W_2=[s]$. Since $s$ is a source vertex, it has a trivial out neighborhood. Thus $W_1\cup W_2\cup U_0=\cup_{0\leq i\leq n}U_0$ suggesting that these three sub-complexes form a cover of the complex.

    From $\delta<\omega(k,s)$, it follows that $s\not\in U_o$. Thus $W_1\cap W_2\cap U_0=\emptyset$. At the same time, our assumption on $\delta$ gives that $U_0\cap W_1$ and $U_0\cap W_2$ are nonempty. Also
    \begin{align*}
        U_0\cap W_1 &=  U_0\cap (U_1\cup\dotsc\cup U_{s-1})\\
        &=\left(U_0\cap U_1\right) \cup \dotsc \cup \left(U_0\cap U_{s-1}\right)\\
        &=U_0\cap U_1
    \end{align*}
    since these out neighborhood intersections along a directed path are nested. It follows that this intersection is a simplex and therefore contractible. The same reasoning follows for $W_2$. 

    By the Nerve theorem, it follows that $\FD_\delta(G)$ is homotopy equivalent to Nerv$\{W_1,W_2,U_0\}$. Further, we've shown that this nerve contains $3$ vertex cycle and since $W_1\cap W_2\cap U_0=\emptyset$ it follows that it is not filled in and therefore not a boundary.

    (Case 2) 
    Suppose $\delta\geq\omega(k,s)$. If there does not exist an $i$ such that $\max\{\omega(k,s),\omega(k,i),\omega(i,s)\}\neq \omega(k,s)$, then following the analysis of the previous case, we find the resulting nerve is a filled $2-$simplex, and therefore has no nontrivial $1-$homology.
    
    If there exists $i$ such that $\max\{\omega(k,s),\omega(k,i),\omega(i,s)\}> \omega(k,s)$, then it follows that the weight sum of one path from $k$ to $s$ is smaller than that of the other. Thus for any $i$ along the shorter path, $\max\{\omega(k,s),\omega(k,i),\omega(i,s)\}= \omega(k,s)$ by construction of the weight $\omega(k,s)$. Thus $i$ is a vertex along the longer path, and by symmetry let us assume that it is some $i<s$. That is, we assume the longer path from $k$ to $s$ encompasses the vertices $i<s$. 
    
     Let $j<s$ be the vertex on the longer path which maximizes $\omega(j,s)$ less than $\delta$. If $j\geq 1$, let $W_j=U_1\cup \cdots \cup U_{j-1}$, and otherwise let $W_j=[j]$. We show homotopy equivalence to Nerv$\{U_0,W_j,U_j\}$. Notice that by assumption the vertices of the shorter path, $i\geq s$, are in $U_0$. Following the same reasoning as the previous case in regard to the intersection with a union of out neighborhoods along a path, we find that $W_j$ intersects contractibly with $U_0$ and $U_j$.

    If $U_0\cap W_j\cap U_j=\emptyset$ then again by the Nerve theorem the complex is homotopy equivalent to Nerv$\{U_0,W_j,U_j\}$. We have also shown that this nerve is a triangle not filled with a $2-$simplex, admitting a single nontrivial element in the the first homology.
    
    Now we show that $\delta$ is below our upper bound in this case. Because the intersection is empty it follows that $j\not\in U_0$. Thus $\omega(k,j)>\delta$. It follows that $\max\{\omega(k,s),\omega(k,j),\omega(j,s)\}=\omega(k,j)>\delta$. Since  $ \omega(k,i)>\omega(k,j)$ for all $j<i<s$, it follows that
    \begin{align*}
        \max\{\omega(k,s),\omega(k,i),\omega(i,s)\}&>\max\{\omega(k,s),\omega(k,j),\omega(j,s)\}>\delta
    \end{align*}
    for all $j<i\leq s$. We conclude that a minimizing vertex $m$ such that
    \begin{align*}
        \min_{0<i<s}\max\{\omega(k,s),\omega(k,i),\omega(i,s)\}&=\max\{\omega(k,s),\omega(k,m),\omega(m,s)\}
    \end{align*}
    must have index less than $j$, meaning $m<j$. However, in this case $\omega(m,s)>\omega(j,s)$, and $j$ was chosen so that $\omega(j,s)$ is maximal, meaning that $\omega(m,s)>\delta$. It then follows 
    \[
    \min_{0<i<s}\max\{\omega(k,s),\omega(k,i),\omega(i,s)\}>\delta.
    \]
     and so $\delta$ is less than the proposed upper bound.
    
     If $U_0\cap W_j\cap U_j\neq \emptyset$ then $j\in U_0$. It follows $\omega(k,s),\omega(k,j),\omega(j,s)\leq \delta$, which implies that $\delta\geq \max\{\omega(k,s),\omega(k,j),\omega(j,s)\}$. Thus $\delta\geq \min_{0<i<s}\max\{\omega(k,s),\omega(k,i),\omega(i,s)\}$. We concluded before that for vertices on the shorter path, $\min_{s<i<n}\max\{\omega(k,s),\omega(k,i),\omega(i,s)\}=\omega(k,s)$. Thus $\min_{s<i<n}\max\{\omega(k,s),\omega(k,i),\omega(i,s)\}\leq \min_{0<i<s}\max\{\omega(k,s),\omega(k,i),\omega(i,s)\}\leq \delta$ which implies $\delta$ is greater than the proposed upper bound.
\end{proof}

\begin{figure}[t]
    \begin{minipage}{.5\textwidth}
        \centering
        \includegraphics[width=0.8\linewidth]{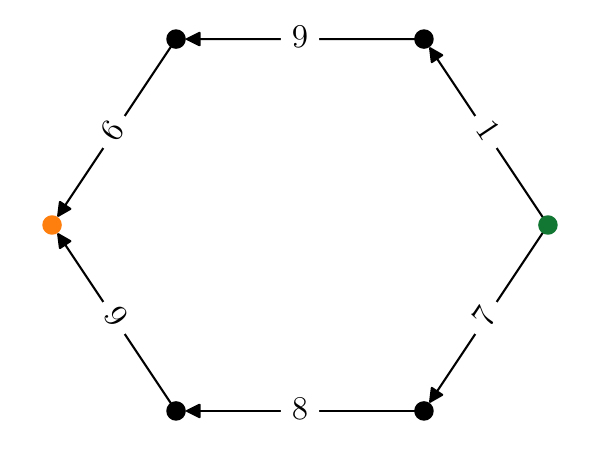}
    \end{minipage}%
    \begin{minipage}{0.5\textwidth}
        \centering
        \includegraphics[width=\linewidth]{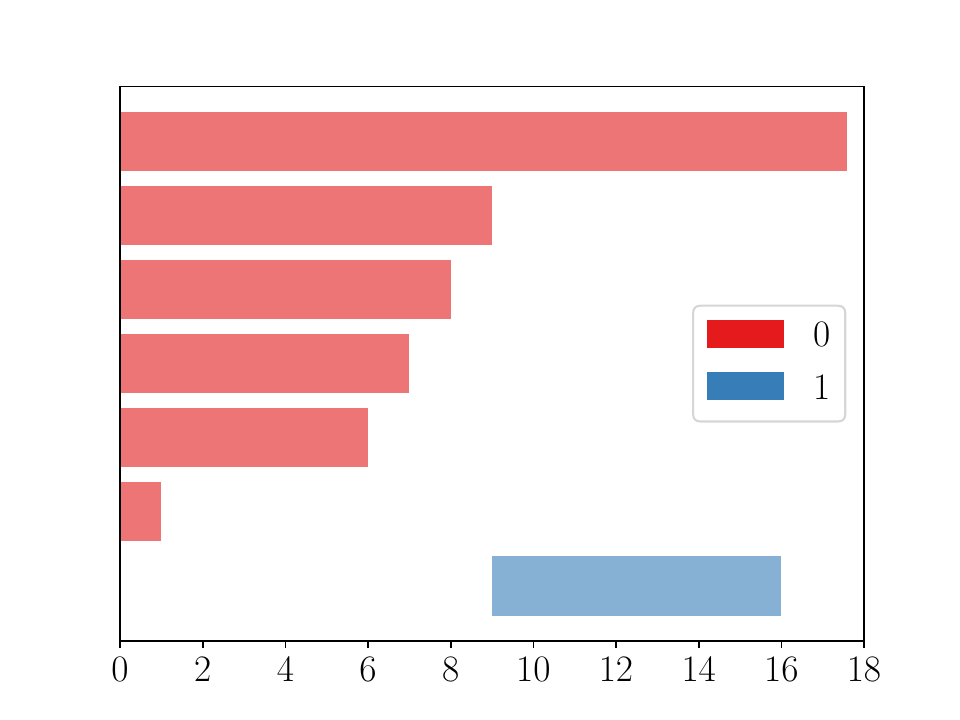}
    \end{minipage}
    \caption{Example of a weighted graph with one source and one sink (left) and its corresponding barcode (right). The source and sink nodes are indicated in green and orange, respectively, in addition to the appropriate arrow orientations. We see the first homology class born at the maximum edge weight, $9$, and die when the source vertex has seen the remaining vertices of the graph, at $16$.}
    \label{fig:weighted_1src1sink}
\end{figure}

Proposition \ref{prop:inconcycledom1} is exemplified in Figure \ref{fig:weighted_1src1sink}. Together, Propositions \ref{prop:inconcycledom1}, \ref{prop:inconcycledom2}, \ref{prop:inconcycledom3}, and \ref{prop:concycle} fully characterize the Dowker persistence of the first homologies of all cycle graphs. From this analysis we find that the Dowker complex reflects a number of key features from the underlying cycle graph. Cycle graphs with large minimal dominating sets are not connected enough for a Dowker complex placed on top to close an induced first homology. In the case of an oriented cycle graph, the nontrivial element of the first homology group corresponds with directed graph cycles in the underlying graph and closes when $\delta$ allows the shortest cycle to be of length $2$. In the remaining cases the dominating sets allow for the first homology class to remain only transiently, but the persistence behaves differently from the  consistently oriented case.

\subsection{Homologies of Wedge Sums}\label{sec:wedge_persistence}

Now that we have fully characterized the persistence of cycles in the path completion, we consider a generalization in the form of wedge sums. In continuous space, the wedge sum of two topological spaces is their union with one point from each identified. Formally, if $A$ and $B$ are simplicial complexes (topological spaces) with distinguished vertices (points) $a\in A$ and $b\in B$, their wedge sum is defined as,  $A\vee B = A\sqcup B/ \sim$ with the equivalence $a\sim b$. Similarly, a wedge sum of two graphs is their union with distinguished vertices from each identified. In an effort to better understand the persistence of multi cycle dynamical manifolds (such as the Lorenz system discussed in Section \ref{sec:experiments}), we turn our attention to the persistence of the wedge sum of graphs.

\begin{theorem}\label{thm:wedge_sum}
    Let $G_1$ and $G_2$ be directed graphs with distinguished vertices $x_0$ and $y_0$ respectively. Let $\delta\in\mathbb{R}$. Then $\tilde{H}_n(\FD_\delta(G_1\vee G_2))\approx \tilde{H}_n(\FD_\delta(G_1))\oplus \tilde{H}_n(\FD_\delta(G_2))$ when the graph wedge sum is formed using vertices $x_0,y_0$ that share a $\delta$ neighborhood with at least one other vertex. That is, the reduced Dowker homology of the path completion of the wedge sum of two graphs, is equivalent to the direct sum of the Dowker homologies separately.
\end{theorem}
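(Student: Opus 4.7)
My plan is to prove the theorem via two applications of Mayer--Vietoris, exploiting the fact that the wedge vertex $z_0$ (formed by identifying $x_0 \sim y_0$) is a chokepoint for every directed path that crosses between the two components of $G_1 \vee G_2$.

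First I would identify $\FD_\delta(G_i)$ with the subcomplex $C_i \subseteq C := \FD_\delta(G_1 \vee G_2)$ consisting of all simplices whose vertex set lies in $V(G_i)$. The key observation here is that any directed path in $G_1 \vee G_2$ leaving $V(G_i)$ can only re-enter through $z_0$, and by nonnegativity of weights such an excursion cannot shorten the path. Hence shortest paths between two vertices of $V(G_i)$ remain inside $G_i$, and the Dowker relations induced on $V(G_i)$ by $P(G_1 \vee G_2)$ and $P(G_i)$ coincide.

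The structural heart of the argument is a second claim: every \emph{mixed} simplex $\sigma \in C$---one with vertices in both $V(G_1) \setminus \{z_0\}$ and $V(G_2) \setminus \{z_0\}$---also satisfies $\sigma \cup \{z_0\} \in C$. Indeed, a generating vertex $v$ with $d(v,w) \le \delta$ for all $w \in \sigma$ must reach vertices on both sides of the wedge, and any such directed path to the opposite side traverses $z_0$, forcing $d(v,z_0) \le \delta$. Combined with the standing hypothesis that $z_0$ has a $\delta$-neighbor in each $G_i$ (ruling out degenerate isolated cases), this gives the decomposition
\[
C \;=\; C_1 \,\cup\, C_2 \,\cup\, \operatorname{St}_C(z_0),
\]
where $\operatorname{St}_C(z_0)$ is the closed star of $z_0$ in $C$.

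With this in hand, I would set $A := C_1 \cup \operatorname{St}_C(z_0)$ and $B := C_2 \cup \operatorname{St}_C(z_0)$, so that $A \cup B = C$ and $A \cap B = \operatorname{St}_C(z_0)$---the latter being contractible as a simplicial cone over the link of $z_0$. A first Mayer--Vietoris sequence for reduced homology then collapses to
\[
\tilde{H}_n(C) \;\cong\; \tilde{H}_n(A) \oplus \tilde{H}_n(B).
\]
A second Mayer--Vietoris applied to $A = C_1 \cup \operatorname{St}_C(z_0)$---whose intersection $C_1 \cap \operatorname{St}_C(z_0) = \operatorname{St}_{C_1}(z_0)$ is also contractible as a vertex star---identifies $\tilde{H}_n(A) \cong \tilde{H}_n(C_1)$, and symmetrically $\tilde{H}_n(B) \cong \tilde{H}_n(C_2)$. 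Assembling the two isomorphisms yields the claim.

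I expect the main obstacle to be the careful verification of the mixed-simplex step, since promoting $\sigma$ to $\sigma \cup \{z_0\}$ requires simultaneous control of shortest paths in $P(G_1 \vee G_2)$ on both sides of the wedge and distinguishing whether the generating vertex lies in $V(G_1) \setminus \{z_0\}$, in $V(G_2) \setminus \{z_0\}$, or at $z_0$ itself. Once this local-to-global bridge is secured, the remainder is a routine diagrammatic consequence of Mayer--Vietoris.
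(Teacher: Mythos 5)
Your argument is correct, but it follows a genuinely different route from the paper's. The paper factors the problem into two lemmas: first it compares $\FD^\star_\delta(P(G_1\vee G_2))$ with the intermediate complex $\FD^\star_\delta(P(G_1)\vee P(G_2))$, proving the closure of their difference (and its intersection with the smaller complex) is contractible via a simplicial-collapse argument on maximal simplices containing the wedge vertex, and applying Mayer--Vietoris; second, it decomposes $\FD^\star_\delta(P_1\vee P_2)$ as the simplicial wedge $\FD^\star_\delta(P_1)\vee\FD^\star_\delta(P_2)$ union the $\delta$-neighborhood simplex of $x_0$, applies Mayer--Vietoris again, and then invokes the standard wedge-sum formula for reduced homology (Hatcher, Cor.~2.25), which is where the ``shares a $\delta$-neighborhood'' hypothesis enters as a good-pair condition. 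You instead work entirely inside $C=\FD_\delta(G_1\vee G_2)$ with the single decomposition $C=C_1\cup C_2\cup \operatorname{St}_C(z_0)$, justified by the two observations that shortest paths between vertices of $V(G_i)$ never profit from crossing $z_0$ (so $C_i\cong\FD_\delta(G_i)$, and a witness on the wrong side can be replaced by $z_0$) and that any witness for a mixed simplex must pass through $z_0$, hence also witnesses $\sigma\cup\{z_0\}$; both facts check out given nonnegative weights. Your two Mayer--Vietoris applications with contractible closed stars then close the argument without the collapse lemma, without the intermediate object $P(G_1)\vee P(G_2)$, and without the textbook wedge formula. What the paper's route buys is modularity -- it isolates the effect of path completion from the effect of wedging, and Lemma~\ref{lem:graph_sum} is reusable on its own -- while your route is shorter and, notably, makes the stated hypothesis on $x_0,y_0$ essentially superfluous: the closed star $\operatorname{St}_C(z_0)$ and the intersections $C_i\cap\operatorname{St}_C(z_0)=\operatorname{St}_{C_i}(z_0)$ are cones, hence contractible, whether or not $z_0$ has a $\delta$-neighbor.

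Two small points to make explicit when writing this up: verify $A\cap B=\operatorname{St}_C(z_0)$ by noting $C_1\cap C_2$ consists only of faces of $[z_0]$, which already lie in the closed star; and state the witness-replacement argument for $C_i=\FD_\delta(G_i)$ separately from the mixed-simplex claim, since the former is what licenses identifying the subcomplexes with the standalone Dowker complexes before any Mayer--Vietoris is run.
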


In order to prove this result, we require two intermediate Lemmas.
\begin{lem}\label{lem:path_wedge}
    Let $G_1$ and $G_2$ be directed graphs with distinguished vertices $x_0$ and $y_0$ respectively. Let $\delta\in\mathbb{R}$. It follows that  $\tilde{H}_n(\FD^\star_\delta(P(G_1\vee G_2)))
    \approx
    \tilde{H}_n(\FD^\star_\delta(P(G_1)\vee P(G_2)))
        $.
\end{lem}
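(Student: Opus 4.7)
The plan is to exhibit the larger complex $L:=\FD^\star_\delta(P(G_1\vee G_2))$ as a simplicial collapse of the smaller complex $M:=\FD^\star_\delta(P(G_1)\vee P(G_2))$, which gives $L\simeq M$ and hence the claimed isomorphism on reduced homology in every dimension.

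Write $z_0$ for the vertex identified in the wedge (so $x_0=y_0=z_0$). The first step would be to compare the two path-completed weight functions. For any vertices $v,w\in V(G_i)$, a $v\rightsquigarrow w$ path in $G_1\vee G_2$ that enters the other side must pass through $z_0$ at least twice, so by the triangle inequality inside $G_i$ its weight is at least $d_{G_i}(v,w)$. Thus shortest-path distances within each side are preserved, and the only new finite weights in $P(G_1\vee G_2)$ over $P(G_1)\vee P(G_2)$ are the cross weights $d_{G_1}(v,z_0)+d_{G_2}(z_0,x)$ for $v\in V(G_1)\setminus\{z_0\}$ and $x\in V(G_2)\setminus\{z_0\}$. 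Because weights only shrink going from $M$'s weight function to $L$'s, $M\subseteq L$ as a subcomplex on the common vertex set.

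Next, I would classify every simplex $\sigma\in L$ as one of: (A) contained in $V(G_1)$; (B) contained in $V(G_2)$; or (C) \emph{spanning}, meeting both $V(G_1)\setminus\{z_0\}$ and $V(G_2)\setminus\{z_0\}$. A direct case analysis on the witnessing vertex $v$ of $\sigma$ shows types (A) and (B) already belong to $M$. For a spanning $\sigma$, the witness $v$ necessarily satisfies $\omega(v,z_0)\leq\delta$: reaching any vertex of $V(G_j)\setminus\{z_0\}$ from $v\in V(G_i)$ requires travelling through $z_0$, forcing $d(v,z_0)$ to be no larger than $\delta$. Consequently $\sigma\cup\{z_0\}\in L$ whenever $\sigma$ is spanning, so $L\setminus M$ consists precisely of the spanning simplices and $\sigma\mapsto \sigma\cup\{z_0\}$ is a well-defined involution on them toggling membership of $z_0$.

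The core step is a discrete Morse matching: pair each spanning $\sigma$ with $z_0\notin\sigma$ to $\sigma\cup\{z_0\}$. Every spanning simplex is then matched and the critical (unmatched) simplices are the non-spanning ones, which together with their faces constitute $M$. The main technical hurdle, and the step I expect to require the most care, is verifying acyclicity. If $(\sigma,\sigma\cup\{z_0\})$ is a matched pair and $\sigma'\neq\sigma$ is a codimension-one face of $\sigma\cup\{z_0\}$, then $\sigma'=(\sigma\cup\{z_0\})\setminus\{u\}$ for some $u\neq z_0$, so $z_0\in\sigma'$, which precludes $\sigma'$ from being the lower element of any matched pair. Every modified V-path therefore halts in one step, so the matching is acyclic in Forman's sense, and his collapse theorem yields $L\searrow M$. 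This gives the homotopy equivalence $L\simeq M$ and hence the desired reduced-homology isomorphism in every dimension.
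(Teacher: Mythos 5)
Your overall strategy (a collapse of $L:=\FD^\star_\delta(P(G_1\vee G_2))$ onto $M:=\FD^\star_\delta(P(G_1)\vee P(G_2))$ via a discrete Morse matching centered at the wedge point) is genuinely different from the paper's argument, which instead sets $A=\text{Cl}(L\setminus M)$, $B=M$, shows $A$ and $A\cap B$ are contractible by collapsing toward $x_0$, and concludes with Mayer--Vietoris. However, your write-up contains a concrete error: it is not true that $L\setminus M$ consists precisely of the spanning simplices. In $P(G_1)\vee P(G_2)$ the vertex $z_0$ is the unique vertex with finite out-weights into both $V(G_1)\setminus\{z_0\}$ and $V(G_2)\setminus\{z_0\}$, so $M$ does contain spanning simplices, namely every face of the $\delta$-out-neighborhood simplex of $z_0$ that meets both sides (this is exactly the extra piece $N_\delta(x_0)$ that appears in the paper's Lemma 18). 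Your matching pairs \emph{all} spanning simplices, including these, so the set of critical cells is $\FD^\star_\delta(P(G_1))\vee\FD^\star_\delta(P(G_2))$, which is in general a proper subcomplex of $M$. As written, the collapse theorem therefore gives $L\searrow \FD^\star_\delta(P(G_1))\vee\FD^\star_\delta(P(G_2))$, not $L\searrow M$, and the stated conclusion does not follow.

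The gap is repairable within your framework. Observe that for a spanning simplex $\sigma$, membership in $M$ is equivalent to ``$z_0$ witnesses $\sigma$,'' and since $d(z_0,z_0)=0\leq\delta$ this condition is invariant under adding or deleting $z_0$ from $\sigma$. Hence the involution $\sigma\mapsto\sigma\,\triangle\,\{z_0\}$ preserves both $M$ and $L\setminus M$; restricting your matching to $L\setminus M$ yields a perfect acyclic matching there (your acyclicity argument is unaffected), and the collapse theorem then gives $L\searrow M$ as desired. Alternatively, you could keep the full matching and note that its restriction to $M$ also collapses $M$ onto $\FD^\star_\delta(P(G_1))\vee\FD^\star_\delta(P(G_2))$, so $L$ and $M$ are homotopy equivalent to a common subcomplex. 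Either repair completes the proof; with one of them in place your argument is correct and arguably cleaner than the paper's, since it produces a homotopy equivalence rather than only an isomorphism on reduced homology. You should also make explicit the standing assumption $\delta\geq 0$ (so that $z_0$ sees itself), and state, as you implicitly use, that edge weights are nonnegative so that within-side shortest-path distances are preserved in the wedge.
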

\begin{proof}
    Let $A= \text{Cl}(\FD^\star_\delta(P(G_1\vee G_2))\setminus \FD^\star_\delta(P(G_1)\vee P(G_2)))$ and $B=\FD^\star_\delta(P(G_1)\vee P(G_2))$.

    Our first claim is that $A$ is contractible. Notice that the maximal simplices of $A$ are not in $\FD^\star_\delta(P(G_1)\vee P(G_2))$. Thus they require edges between $G_1\setminus x_0$ and $G_2\setminus x_0$ in $P(G_1\vee G_2)$. Since any path in $G_1\vee G_2$ inducing such an edge must pass through $x_0$, it follows that the maximal simplices of $A$ all contain $x_0$. This induces a free face of every maximal simplex, which we may collapse with respect to. That is a maximal simplex $\sigma=[\sigma_1,\cdots,x_0,\cdots,\sigma_n]$ implies $ [\sigma_1,\cdots,\hat{x_0},\cdots,\sigma_n]$ is free. Consider the complex $A'$ where we perform a simplicial collapse with respect to this free face of every maximal simplex. It follows that the maximal simplices of $A'$ also contain $x_0$. Thus we may inductively collapse the maximal simplices until only $x_0$ remains. This shows the desired contractibility.

    Our second claim is that $A\cap B$ is contractible. This is precisely simplices returned by the closure in the definition of $A$. Let $\sigma=[\sigma_1,\dotsc,\sigma_n]$ be a maximal simplex of $A$, induced by a path from $\sigma_1\in V(G_1)\setminus x_0$ to $\sigma_n\in v(G_2)\setminus x_0$. This simplex implies $\eta_1=[\sigma_1,\dotsc,x_0]$ and $\eta_2=[x_0,\dotsc,\sigma_n]$ in $A\cap B$ representing paths to and from $x_0$, respectively. Thus all maximal simplices of $A\cap B$ contain $x_0$. Repeating the same argument as for $A$, we find that $A\cap B$  is contractible.

    Given that $\tilde{H}_n(A)=\tilde{H}_n(A\cap B)=0$, it follows by Mayer Vietoris that the following sequence is exact.

    \[
    0
    \longrightarrow
    \tilde{H}_n(B)
    \longrightarrow
    \tilde{H}_n(A\cup B)
    \longrightarrow
    0
    \]

    This implies the desired isomorphism

    \[
    \tilde{H}_n(\FD^\star_\delta(P(G_1)\vee P(G_2)))
    \approx
    \tilde{H}_n(\FD^\star_\delta(P(G_1\vee G_2))).
    \]

\end{proof}

\begin{lem}\label{lem:graph_sum}
    Let $P_1$ and $P_2$ be directed graphs with distinguished vertices $x_0$ and $y_0$ respectively. Let $\delta\in\mathbb{R}$. It follows $\tilde{H}_n(\FD^\star_\delta(P_1\vee P_2))
    \approx
    \tilde{H}_n(\FD^\star_\delta(P_1))\oplus \tilde{H}_n(\FD^\star_\delta(P_2))
    $  when the graph wedge sum is formed using vertices $x_i\in P_i$ that share a $\delta$ neighborhood with at least one other vertex.
\end{lem}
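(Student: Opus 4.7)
The plan is to apply reduced Mayer--Vietoris to a decomposition of $C := \FD^\star_\delta(P_1\vee P_2)$ whose two pieces are homotopy equivalent to $A := \FD^\star_\delta(P_1)$ and $B := \FD^\star_\delta(P_2)$ and whose intersection is contractible. The first step is to classify simplices of $C$ by their witness. Because the wedge sum contains no edges between $V(P_1)\setminus\{x_0\}$ and $V(P_2)\setminus\{y_0\}$, any simplex $\sigma\in C$ with witness $v$ satisfies exactly one of three possibilities: $v\in V(P_1)\setminus\{x_0\}$, forcing $\sigma\in A$; $v\in V(P_2)\setminus\{y_0\}$, forcing $\sigma\in B$; or $v=x_0=y_0$, forcing $\sigma\subseteq S:=N_\delta^+(x_0) = S_1\cup S_2$, where $S_i$ denotes the $\delta$-out-neighborhood of $x_i$ inside $P_i$ (with $x_i\in S_i$ by the trivial self-loop convention).

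Since $x_0$ witnesses every subset of $S$, the full geometric simplex $\Delta[S]$ on vertex set $S$ lies in $C$. Define
\[
U := A\cup\Delta[S], \qquad V := B\cup\Delta[S].
\]
The classification gives $U\cup V = C$ and $U\cap V = \Delta[S]$, which is contractible. Using the wedge identification $V(P_1)\cap V(P_2) = \{x_0\}$, the subsets of $S$ that lie in $V(P_1)$ are exactly the subsets of $S_1$, so $A\cap\Delta[S] = \Delta[S_1]$. Collapsing the simplex $\Delta[S]$ onto its face $\Delta[S_1]\subseteq A$ and extending by the identity on $A$ gives a deformation retraction $U\to A$, whence $U\simeq A$; symmetrically $V\simeq B$. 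The hypothesis that each $x_i$ shares a $\delta$-neighborhood with another vertex ensures that $S_1$ and $S_2$ extend beyond $\{x_0\}$, so that the collapse is meaningful at this filtration parameter and the wedge point behaves as a good basepoint.

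The last step is to feed the decomposition into the reduced Mayer--Vietoris sequence
\[
\cdots\to\tilde H_n(U\cap V)\to\tilde H_n(U)\oplus\tilde H_n(V)\to\tilde H_n(C)\to\tilde H_{n-1}(U\cap V)\to\cdots
\]
Since $\tilde H_\ast(U\cap V)=0$, the sequence collapses to give $\tilde H_n(C)\cong \tilde H_n(U)\oplus\tilde H_n(V)\cong\tilde H_n(A)\oplus\tilde H_n(B)$, which is the claim. The main obstacle is the classification step: one has to rule out any mixed simplex whose witness is not $x_0$, which rests entirely on the fact that the wedge construction introduces no edge connecting $V(P_1)\setminus\{x_0\}$ with $V(P_2)\setminus\{y_0\}$. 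Once that is in hand, the two deformation retractions reduce to standard collapses of a simplex onto a face, and the Mayer--Vietoris application is routine.
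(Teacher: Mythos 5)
Your argument is correct, and it rests on the same structural observation as the paper's proof: in $\FD^\star_\delta(P_1\vee P_2)$ the only ``mixed'' simplices are those witnessed by the wedge point, and these form a single full simplex on $N^+_\delta(x_0)=S_1\cup S_2$. Where you diverge is in how the Mayer--Vietoris decomposition is set up. The paper writes the complex as $(\FD^\star_\delta(P_1)\vee\FD^\star_\delta(P_2))\cup\Delta[S]$, uses one Mayer--Vietoris step (with intersection $\Delta[S_1]\vee\Delta[S_2]$, contractible) to reduce to the simplicial wedge $\FD^\star_\delta(P_1)\vee\FD^\star_\delta(P_2)$, and then invokes the standard wedge-sum formula for reduced homology (Hatcher, Cor.~2.25), which is where the good-pair hypothesis on $x_0,y_0$ enters. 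You instead absorb $\Delta[S]$ into both pieces, take $U=A\cup\Delta[S]$ and $V=B\cup\Delta[S]$ with $U\cap V=\Delta[S]$ contractible, and recover $U\simeq A$, $V\simeq B$ by the linear retraction of the simplex $\Delta[S]$ onto its face $\Delta[S_1]=A\cap\Delta[S]$ (which fixes that face pointwise, so it glues with the identity on $A$). This buys you a single, self-contained Mayer--Vietoris application that outputs the direct sum directly, with no appeal to the wedge-sum homology lemma; the cost is that you must verify the witness classification and the identity $A\cap\Delta[S]=\Delta[S_1]$ explicitly, which you do. One small remark: under the paper's trivial self-loop convention $x_0\in S_1\cap S_2$, so your retractions make sense even if $S_i=\{x_0\}$; the hypothesis that each $x_i$ shares a $\delta$-neighborhood with another vertex is doing less work in your version than in the paper's, where it is needed to certify the good pairs for the wedge formula.
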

\begin{proof}
    Notice that $\FD^\star_\delta(P_1),\FD^\star_\delta(P_2)\subseteq \FD^\star_\delta(P_1\vee P_2)$. It follows that $\FD^\star_\delta(P_1)\vee \FD^\star_\delta(P_2)\subseteq  \FD^\star_\delta(P_1\vee P_2)$, where $\vee$ denotes the simplicial wedge sum. We assume the simplicial and graph wedge sums identify the same vertex, $x_0$, and that that vertex is not isolated for our choice of $\delta$.

    By construction $x_0$, in the graph $P_1\vee P_2$, is the only vertex with neighbors in $P_1$ and $P_2$, when they are seen as subsets of their wedge sum. Otherwise connectivity is unchanged. It follows that $\FD^\star_\delta(P_1\vee P_2)
    =
    \FD^\star_\delta(P_1)\vee \FD^\star_\delta(P_2) \cup N_\delta(x_0)$ where $N_\delta(x_0)$ is the $\delta$ neighborhood simplex of $x_0$.

    Notice now that the intersection $(\FD^\star_\delta(P_1)\vee \FD^\star_\delta(P_2)) \cap N_\delta(x_0)=(\FD^\star_\delta(P_1)\cap N_\delta(x_0)\vee (\FD^\star_\delta(P_2)\cap N_\delta(x_0)
    )$ since the base point $x_0$ is in the intersection. This is the wedge of the $\delta$ neighborhood simplices of $x_0$ in the component complexes separately, and is clearly contractible. 

    Since the intersection is contractible, through a standard Mayer-Vietoris argument we can show that 
    $\tilde{H}_n(\FD^\star_\delta(P_1)\vee \FD^\star_\delta(P_2) \cup N_\delta(x_0)))
    \approx
    \tilde{H}_n(\FD^\star_\delta(P_1)\vee \FD^\star_\delta(P_2))$.

    Notice that the assumption on the base points guarantees that $x_0$ is not an isolated point in the complex (in particular, that the basepoints form good pairs with their parent complex). It follows from a standard result that $\tilde{H}_n(\FD^\star_\delta(P_1)\vee \FD^\star_\delta(P_2))\approx \tilde{H}_n(\FD^\star_\delta(P_1))\oplus \tilde{H}_n(\FD^\star_\delta(P_2))$(e.g. Corollary $2.25$ in \cite{Hatcher_AT}).
\end{proof}

\begin{proof}[Proof of Theorem \ref{thm:wedge_sum}]
    The result follows immediately after applying Lemma \ref{lem:path_wedge} then Lemma \ref{lem:graph_sum}.
\end{proof}

\begin{figure}[t]
    \begin{minipage}{.5\textwidth}
        \centering
        \includegraphics[width=0.8\linewidth]{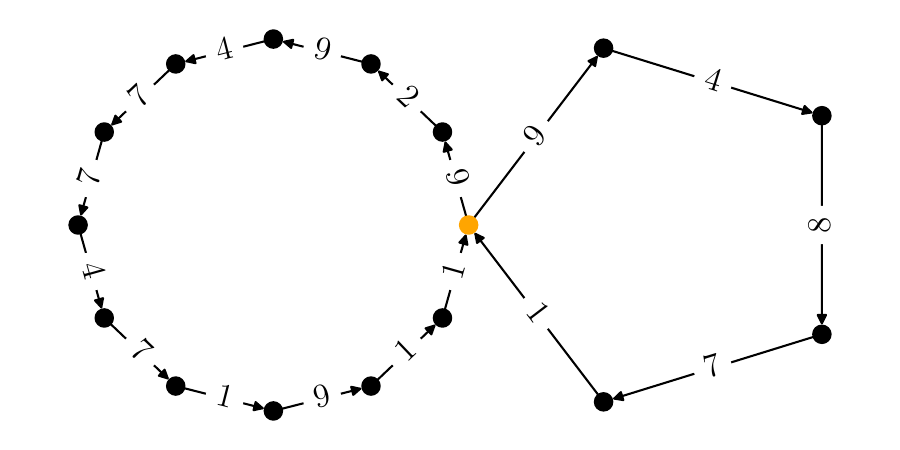}
    \end{minipage}%
    \begin{minipage}{0.5\textwidth}
        \centering
        \includegraphics[width=\linewidth]{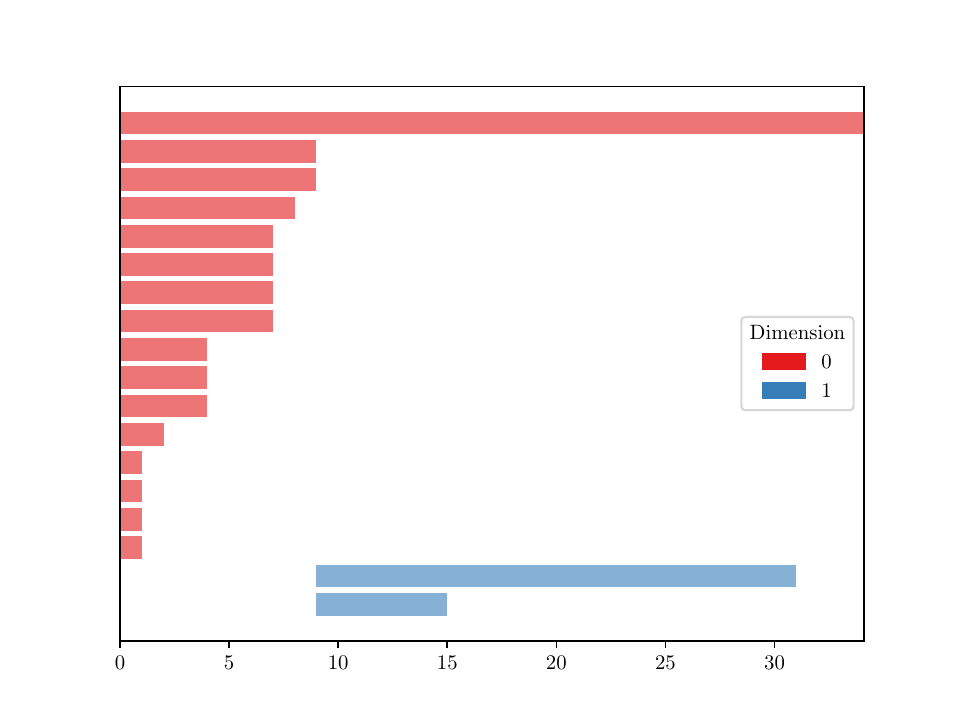}
    \end{minipage}
    \caption{Example of the Dowker persistence of the wedge sum of two cycle graphs. The two 1-D bars correspond exactly to the expected values of the left cycle (longer bar) and the right cycle (shorter bar).}
    \label{fig:wedge}
\end{figure}

This result provides a bridge between wedge sums in the underlying graph topology, and the resulting Dowker homology. 

Figure \ref{fig:wedge} shows an example of a wedge sum of two consistently oriented cycle graphs and the corresponding Dowker persistence. Both 1-D homology classes are born at $9$ because this is highest edge weight in both cycles. The homology class corresponding to the right cycle dies at $15$, while the homology class corresponding to the left cycle dies at $31$. This follows the expectation from Proposition \ref{prop:concycle}. This test case provides a rough approximation of the persistence of a system with two attracting limit cycles like the Lorenz system (Section \ref{sec:experiments}).

Notice that Theorem \ref{thm:wedge_sum} suggests a wedge sum with something of trivial homology, such as the path graph on two vertices, does not change the homology. Thus a natural and expected corollary is that any directed graph without a cycle subgraph has trivial reduced homology. This class of graphs is called the directed acyclic graphs.

We can combine this with the results of Section \ref{sec:cycle_persistence}, characterizing cycle graph persistence. It follows from Theorem \ref{thm:wedge_sum} that the Dowker persistence of any graph which may be described as a sequence of wedge sums of a directed edge and cycle graphs may be described in terms of the persistence of those cycle graphs (Corollary \ref{cor:cactus_hom}). This class of graphs is the set of graphs on which cycles share at most one vertex. In the undirected case, these graphs are called  cactus graphs.  We refer to cactus graphs with oriented edges as directed cactus graphs.

\begin{cor}\label{cor:cactus_hom}
    Let $G$ be a directed cactus graph. Let $\{C_i\}_{i=1}^k$ be the unique set of cycle subgraphs of $G$. Then $\tilde{H}_n(\FD_\delta(G))\approx \oplus_{i=1}^k \tilde{H}_n(\FD_\delta(C_i))$.
\end{cor}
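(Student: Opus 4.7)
The plan is to induct on the number of blocks of the cactus graph $G$, peeling off one block at a time via Theorem \ref{thm:wedge_sum}. A directed cactus graph admits a canonical block decomposition: since every edge lies in at most one cycle, the blocks (biconnected components) are precisely the cycles $C_1,\dotsc,C_k$ together with the bridge edges not belonging to any cycle. Adjacent blocks meet in a single articulation vertex, and this decomposition realizes $G$ as an iterated graph-theoretic wedge sum of its blocks.

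For the base case $G$ is a single block: either a cycle $C_i$, in which case the claim is trivial, or a single directed edge, whose Dowker complex is at most a $1$-simplex and therefore has trivial reduced homology, matching the empty direct sum. For the inductive step, select a leaf block $B$ in the block-cut tree of $G$ and write $G = B\vee_v G'$, where $v$ is the articulation vertex and $G'$ is a cactus with one fewer block. Applying Theorem \ref{thm:wedge_sum} yields
\[
\tilde{H}_n(\FD_\delta(G))\approx \tilde{H}_n(\FD_\delta(B))\oplus \tilde{H}_n(\FD_\delta(G')).
\]
If $B$ is a bridge then it is a directed acyclic graph and $\tilde{H}_n(\FD_\delta(B))=0$ by the remark following Theorem \ref{thm:wedge_sum}; if $B=C_j$ the corresponding summand $\tilde{H}_n(\FD_\delta(C_j))$ is retained. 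Applying the induction hypothesis to $G'$ collects the remaining cycle contributions, yielding the claimed direct-sum decomposition.

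The main obstacle is verifying the hypothesis of Theorem \ref{thm:wedge_sum} that the wedge vertex $v$ shares its $\delta$-neighborhood with at least one other vertex in both $B$ and $G'$. This may fail for small $\delta$, where $v$ has no incident edges of weight at most $\delta$ on one side. In such degenerate regimes the corresponding factor complex reduces to $v$ together with possibly isolated vertices, contributing trivially to reduced homology; a direct Mayer-Vietoris argument recovers the conclusion by absorbing the degenerate side into its complement. One should additionally confirm that the undirected block-cut decomposition underlying a cactus lifts coherently to the oriented setting so that the path completion respects each splitting, as required by the reduction to Lemma \ref{lem:path_wedge} inside the proof of Theorem \ref{thm:wedge_sum}.
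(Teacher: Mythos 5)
Your induction on the block decomposition, peeling off leaf blocks via Theorem \ref{thm:wedge_sum} and discarding bridges as directed acyclic pieces with trivial reduced homology, is exactly the argument the paper intends (the paper offers no formal proof, only the remark that a cactus is an iterated wedge sum of directed edges and cycle graphs). You are in fact more careful than the paper: the corollary as stated silently drops the hypothesis of Theorem \ref{thm:wedge_sum} that each wedge vertex share a $\delta$-neighborhood with another vertex, and your observation that the degenerate small-$\delta$ regime needs a separate (Mayer--Vietoris) patch is a genuine refinement rather than a deviation.
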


Thus the persistence of a directed cactus graph is the direct sum of the persistence of its cycles, the first homology of which we characterized in the previous sections. Figure \ref{fig:cactus} exemplifies this result, showing the persistence of a generic weighted directed cactus graph.

\begin{figure}[t]
    \begin{minipage}{0.5\textwidth}
        \centering
        \includegraphics[width=0.8\linewidth]{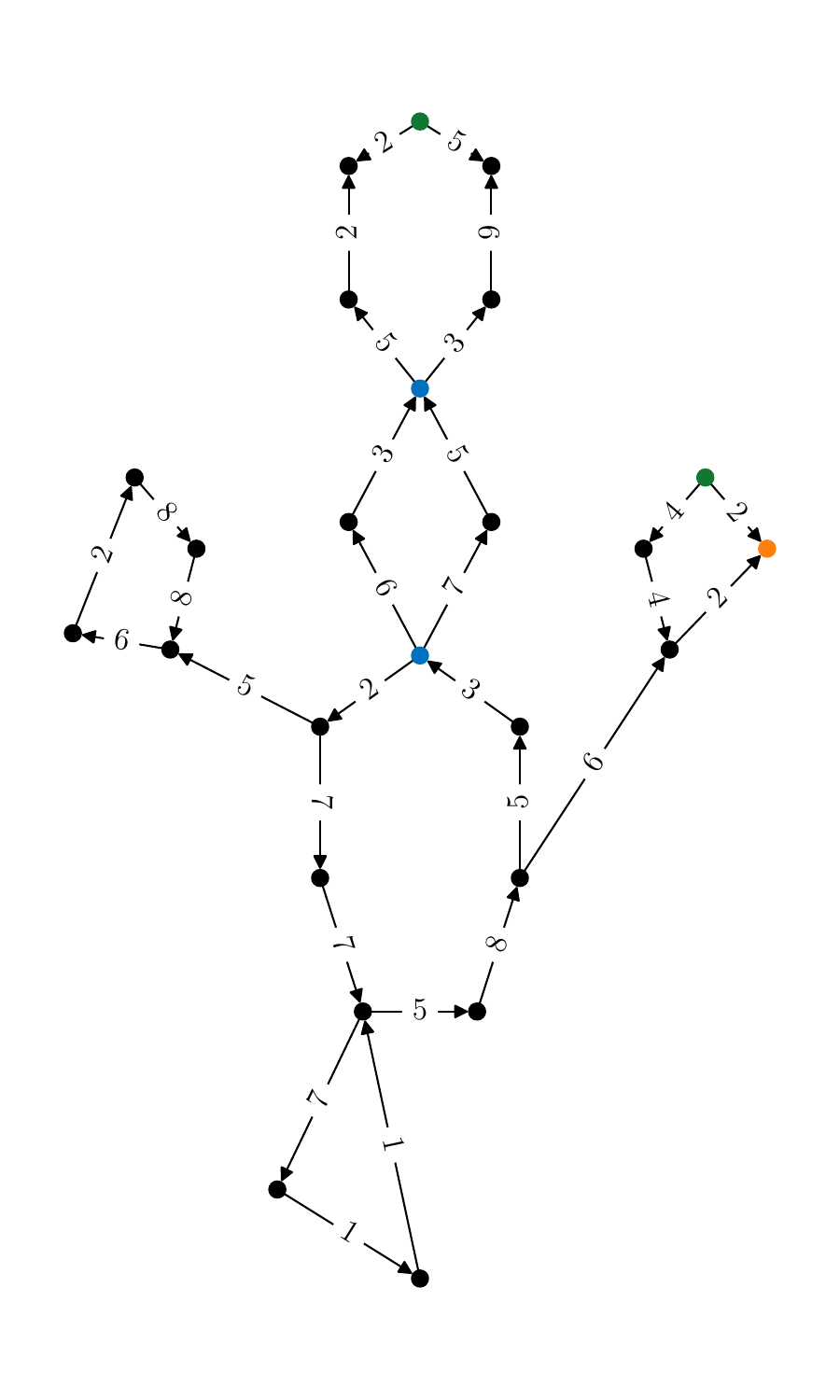}
    \end{minipage}%
    \begin{minipage}{0.5\textwidth}
        \centering
        \includegraphics[width=\linewidth]{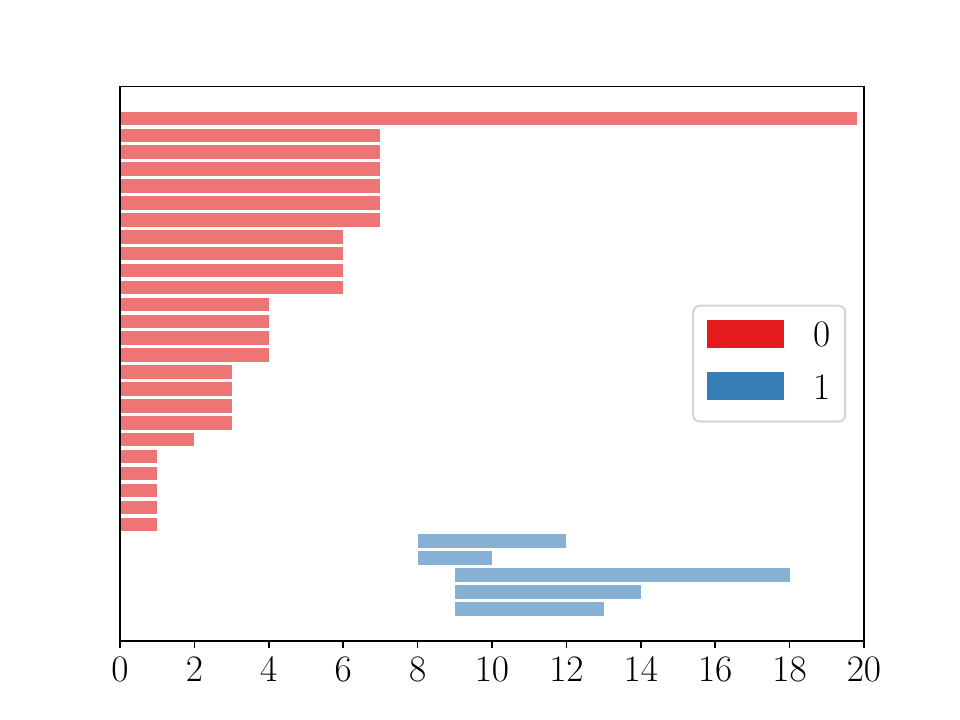}
    \end{minipage}
    \caption{An example of the Dowker persistence of a generic cactus graph. this graph has $6$ cycle subgraphs of various orientations. Notice the bottom-most cycle with three edges has no first homology at any $\delta$ because at its highest edge weight it is already a boundary. For this reason we see five first homologies with varying persistence lengths on the right, corresponding to the persistences of the five other cycles.}
    \label{fig:cactus}
\end{figure}

\subsection{Dynamical Trajectories}\label{sec:dyn_traj}

Our analysis this far has focused on the relationship between graph structure and a Dowker persistence framework built thereupon. Section \ref{sec:dom_set_persistence} investigated the relationship of Dowker persistence with dominating sets, and how weakly connected graphs may have indefinitely persisting features. 
In this section we formalize the binning procedure of our framework, creating a coarse grained structure from a dynamical trajectory. We then consider the properties of this graph and its persistence, given work from Section \ref{sec:dom_set_persistence}.

Assume a given point cloud is associated with a single dynamical trajectory, yielding a total order of its points. This allows us to describe them as the vertices of a directed path graph, i.e. a single directed path spanning all the vertices. Thus the spatial binning of a trajectory in phase space corresponds with a vertex partition of the corresponding directed path graph. 

Given a (di)graph, $G$, and vertex partition, $\pi$, we may define a quotient graph, $G^\pi$. The vertex set of this graph is the set of partition cells of $\pi$, and we add an edge between vertices if an edge between those partition cells exists in $G$. This graph represents the connectivity between partition cells of $\pi$, and can be weighted or unweighted. Weighted variants often add the number of connections between partition cells, but may also normalize edge weights by the number of vertices contained in each cell or total number of edges (making the adjacency stochastic). For the experiments in section \ref{sec:experiments} the unweighted quotient graph is used, and the results of this section apply to all quotient graphs.




We have found in our analysis that the Dowker filtration on the path completion of a digraph allows for the persistence to reflect the connectivity through shortest paths with total weight less than $\delta$. This centers directed paths in the connectivity of the graphs considered, and motivates understanding the relationship between paths in $G$ and $G^\pi$. 

\begin{prop}\label{prop:quotientwalk}
    Let $G$ be a network with a (vertex spanning) walk of finite length $W$. Let $\pi$ be a partition of $V(G)$. There exists a (vertex spanning) walk of finite length in $G^\pi$ in correspondence with $W$.
\end{prop}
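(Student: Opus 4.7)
The plan is to push the walk $W$ forward along the cell map and check that the resulting sequence is a walk in $G^\pi$. Write $W=(v_0,v_1,\dots,v_m)$ with $m<\infty$ and $(v_i,v_{i+1})\in E(G)$ for each $i$. Define the induced sequence in $V(G^\pi)=\pi$ by $W^\pi = (C_0,C_1,\dots,C_m)$, where $C_i$ is the unique cell of $\pi$ containing $v_i$. This is the only natural candidate, and the proof reduces to verifying three things about it: that consecutive entries are adjacent (or equal), that the length stays finite, and that spanning is inherited.

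For adjacency, I would split into two cases. If $C_i=C_{i+1}$, the step is accommodated by the convention that every vertex carries a self-loop of weight zero (equivalently, one may simply delete the repetition to obtain a strictly shortened walk). If $C_i\ne C_{i+1}$, then the edge $(v_i,v_{i+1})$ is counted in the off-diagonal entry $(L^\pi)_{C_i,C_{i+1}}=-|\{(u,w)\in E(G): u\in C_i,\ w\in C_j\}|$, which is therefore strictly negative. By the definition of $G^\pi$ as the weighted digraph associated to $L^\pi$, this means $(C_i,C_{i+1})$ is an edge of $G^\pi$ with finite weight. So $W^\pi$ is a walk in $G^\pi$.

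Finiteness is immediate: the constructed walk has at most $m$ edges, whether we retain the zero-weight self-loop steps or collapse consecutive repetitions. For the parenthetical \emph{vertex-spanning} clause, I would observe that every cell $C\in\pi$ is nonempty, so it contains some $v\in V(G)$; if $W$ visits every vertex of $G$, then $v=v_i$ for some $i$, whence $C_i=C$ and $W^\pi$ visits $C$. Thus $W^\pi$ spans $V(G^\pi)$.

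There is no substantive topological or combinatorial obstacle here — the whole statement reduces to the fact that the surjection $V(G)\twoheadrightarrow V(G^\pi)$, $v\mapsto [v]_\pi$, applied entrywise, carries walks to walks and preserves both finiteness and spanning. The only mildly delicate point is making the notion of \emph{correspondence} precise; I would formalize it simply as $W^\pi$ being the entrywise image of $W$ under this cell map, so that the correspondence is canonical rather than a matter of choice.
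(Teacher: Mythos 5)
Your proposal is correct and follows essentially the same route as the paper's proof: push $W$ forward entrywise through the cell map and observe that consecutive distinct cells are joined by a finite-weight edge of $G^\pi$ while repeated cells are absorbed by the zero-weight self-loops. If anything, your verification of the adjacency step via the off-diagonal entries of $L^\pi$ and the explicit treatment of the spanning clause are more careful than the paper's inductive sketch.
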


\begin{proof}    
    Since $W$ is of finite length, it follows that all edges along it are finite as well. Consider the partition cell the starting vertex of $W$ is in. We denote this partition cell with $v_1$ and refer to it interchangeably with the corresponding vertex of $G^\pi$. Because $W$ spans $V(G)$, it follows that there exists a first vertex outside of partition cell $v_1$. Let the partition cell containing this vertex be $v_2$. This process can be continued inductively to span all the vertices of $G^\pi$. Note that this walk may revisit the same vertices or edges multiple times before seeing new ones.
\end{proof}

Thus directed walks in $G$ correspond with directed walks in $G^\pi$. The weight sum of the edges may differ however, as not all edges used in a walk of $G$ may be needed to traverse the corresponding walk in $G^\pi$.

Naturally, if the underlying walk from $G$ is a vertex spanning walk, then the corresponding  walk in $G^\pi$ spans its vertices. This in turn creates a finite edge from the starting vertex to each vertex in the graph's path completion. This implies that the starting vertex to a vertex spanning walk in $G$ source dominates $P(G^\pi)$. From this observation we are able to conclude that any binned graph capturing a single trajectory has only finitely persisting homological features.

\begin{prop}
    Any quotient graph resulting from a binned dynamical trajectory, will have contractible maximal Dowker source and sink complexes under a path completion.
\end{prop}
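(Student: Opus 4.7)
The plan is to show that both the maximal source and sink Dowker complexes of a binned trajectory graph $G^\pi$ admit a size-one dominating set, then invoke Proposition \ref{prop:dominantnerve} to reduce each complex to the nerve of a single neighborhood simplex, which is trivially contractible.

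First, I would observe that the original trajectory graph $G$ is a single directed path and therefore possesses a vertex-spanning walk of finite total weight (the trajectory itself). By Proposition \ref{prop:quotientwalk}, this walk induces a vertex-spanning walk in $G^\pi$, also of finite total weight; write $B_0$ for its starting bin and $B_\infty$ for its ending bin.

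Next, for the source complex, I would argue that $\{B_0\}$ is a source dominating set of $P(G^\pi)$: for every bin $B$, the initial segment of the induced walk from $B_0$ to the first appearance of $B$ is a directed path of finite weight in $G^\pi$, so $\omega_{P(G^\pi)}(B_0, B) < \infty$. Since $\FD_{\delta_\text{max}}(G^\pi) = \FD^\star_{\delta_\text{max}}(P(G^\pi))$, applying Proposition \ref{prop:dominantnerve} with $K = \{B_0\}$ gives that the maximal source complex is homotopy equivalent to $\text{Nerv}(\{[N(B_0)]\})$, a single vertex, and hence contractible. For the sink complex, the dual argument applies: the sink complex of $G^\pi$ coincides with the source complex of the transpose graph $G^{\pi,T}$, the induced walk reverses to a vertex-spanning walk in $G^{\pi,T}$ beginning at $B_\infty$, and the same reasoning yields contractibility.

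Because the full argument reduces to the combinatorial fact that a trajectory induces a vertex-spanning walk in its quotient, there is no substantial obstacle. The only subtlety to handle carefully is the bookkeeping between $G$, $G^\pi$, $P(G^\pi)$, and the transpose $G^{\pi,T}$, and to confirm that Proposition \ref{prop:dominantnerve} (stated for the non-path-completed complex $\FD^\star$) still applies to the path-completed complex via the identity $\FD_\delta(G^\pi) = \FD^\star_\delta(P(G^\pi))$.
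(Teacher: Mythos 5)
Your proposal is correct and follows essentially the same route as the paper: both reduce to the observation that a trajectory is a directed path, use Proposition \ref{prop:quotientwalk} to transfer a vertex-spanning walk to $G^\pi$, and conclude that $P(G^\pi)$ has one-element source and sink dominating sets. The only cosmetic difference is that you invoke Proposition \ref{prop:dominantnerve} directly (nerve of a single neighborhood, hence genuinely contractible), whereas the paper cites Proposition \ref{prop:Dominating_set_reduction}; your version is, if anything, marginally more precise since it yields contractibility rather than only trivial reduced homology.
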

\begin{proof}
    As discussed previously, represented as a graph $G$, a dynamical trajectory is simply a directed path. Thus $P(G)$ has both a one element source dominating set (universal source) and a one element sink dominating set (universal sink). The binning procedure can be described as a reduction of the graph to an unnormalized quotient graph under some vertex partition $\pi$. By Proposition \ref{prop:quotientwalk} it follows that there exists a vertex of $G^\pi$ with a directed path to any other vertex, and a vertex which all other vertices have a directed path to. Thus $P(G^\pi)$ must also have a universal source and sink. By Proposition \ref{prop:Dominating_set_reduction} it follows that $P(G^\pi)$ has contractible maximal source and sink complexes.
\end{proof}

Now suppose the underlying point cloud was not associated to a single dynamical trajectory, but instead mulitple dynamical trajectories. For example it may consist of repeated measurements of the same dynamical system, or perhaps multiple simultaneous measurements of the system. 

In this case instead of a total order on the measurements, we have a partial order where each of the underlying trajectories induce a directed path graph. Thus the underlying graph, $G$, is a disjoint union of directed path graphs. This structure reveals a natural set of directed walks in $G$ which span the quotient graph. In combination with our results on dominating sets, this lets us bound the dimension of the maximal Dowker complex.






\begin{cor}\label{cor:disjnt_paths}
    Let $G$ be a graph and $\pi$ a partition of its vertices. Let $h$ be a set of directed paths in $G$ such that their vertices span the partition cells (that is, each partition cell contains at least one vertex of some directed path in $h$). It follows that for $|h|>1$, the maximal Dowker complex $\FD_{\delta_\text{max}}(G^\pi)$ has trivial $k$th homology for all $k\geq |h|-1$.
\end{cor}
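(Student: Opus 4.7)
The strategy is to combine Proposition \ref{prop:quotientwalk} with Proposition \ref{prop:Dominating_set_reduction}. The former translates directed paths in $G$ into directed walks in $G^\pi$, which in turn yield finite-weight edges in $P(G^\pi)$ from each walk's starting vertex to every cell that walk visits. The latter bounds the dimensions in which nontrivial reduced homology of the maximal Dowker complex can live in terms of the size of a minimal source dominating set. My plan is to exhibit a source dominating set of $P(G^\pi)$ of size at most $|h|$ and then invoke Proposition \ref{prop:Dominating_set_reduction}.

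First, for each directed path $p_i$ in the spanning collection $h = \{p_1,\dotsc,p_{|h|}\}$, let $v_i$ be the starting vertex of $p_i$ in $G$ and let $V_i \in \pi$ denote the partition cell containing $v_i$, viewed as a vertex of $G^\pi$. Applying Proposition \ref{prop:quotientwalk} to each $p_i$ produces a finite-length directed walk in $G^\pi$ starting at $V_i$ whose vertices are exactly those partition cells meeting $p_i$. Because walks of finite length traverse only finite-weight edges, this walk witnesses a finite-weight directed path in $G^\pi$ from $V_i$ to each such cell; equivalently, each cell meeting $p_i$ has finite distance from $V_i$ in $P(G^\pi)$.

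Second, since the paths in $h$ span the partition cells by hypothesis, every vertex of $G^\pi$ lies on some $p_i$ and therefore has finite distance in $P(G^\pi)$ from the corresponding $V_i$. Hence $K := \{V_1,\dotsc,V_{|h|}\}$ is a source dominating set of $P(G^\pi)$ of cardinality at most $|h|$. Passing to a minimal subset preserves this upper bound, so the minimum source dominating number of $P(G^\pi)$ is at most $|h|$.

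Finally, applying Proposition \ref{prop:Dominating_set_reduction} with $k = |h|$ yields that $\FD_{\delta_\text{max}}(G^\pi)$ has trivial reduced $j$-th homology for all $j \geq |h|-1$, which is the desired conclusion. The only real subtlety, and the place I would be most careful, is the transfer of finiteness: one must verify that a walk of finite length in $G^\pi$ actually induces a finite-weight edge from start to end in $P(G^\pi)$, which is immediate from the definition of $P$ as the shortest-path completion but should be stated explicitly to make the reduction to Proposition \ref{prop:Dominating_set_reduction} airtight. Everything else is bookkeeping.
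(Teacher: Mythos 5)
Your proposal is correct and follows essentially the same route as the paper: use Proposition \ref{prop:quotientwalk} to turn each path in $h$ into a directed walk in $G^\pi$, observe that the walks' starting cells form a source dominating set of $P(G^\pi)$ of size at most $|h|$, and conclude via Proposition \ref{prop:Dominating_set_reduction}. Your explicit attention to passing to a minimal dominating set and to the finiteness of the induced edge weights is a welcome bit of extra care, but it does not change the argument.
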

\begin{proof}
    By Proposition \ref{prop:quotientwalk}, every directed path of $h$ induces a directed walk in $G^\pi$. Let the $h^\pi$ be the set of these walks.
    
    Consider the first vertex, $v_0$, on some walk $w\in h^\pi$. Naturally $G^\pi$ contains a directed path from $v_0$ to each vertex of $w$. Thus the out neighborhood of $v_0$ in $P(G^\pi)$ contains all vertices of $w$. 
    
    By assumption the walks in $h^\pi$ span the vertex set of $G^\pi$. Thus $h^\pi$ induces a source dominating set of $P(G^\pi)$  of size $|h^\pi|=|h|$.

    The conclusion follows from Proposition \ref{prop:Dominating_set_reduction}.
\end{proof}

Corollary \ref{cor:disjnt_paths} tells us that when binning multiple dynamical trajectories, we can guarantee finitely persisting features when a small set of them span the bins. In particular, a spanning set of size $k>1$ procludes indefinitely persisting features of dimension $k-1$ or larger.

\section{Experimental Results}\label{sec:experiments}

In this section, we present illustrative examples of our methodology. We first build directed graphs and compute the shortest path distance using NetworkX (\cite{networkX}). This distance matrix is then used as a sub-level set filtration of binary relations for computing Dowker persistence, performed by Hellmer and Spali{\'n}ski's pyDowker (\cite{hellmer2024density}), which produces a simplex tree in the popular computational geometry and topology library Gudhi (\cite{boissonnat2014simplex}). This pipeline is available as an installable Python library (details in Section \ref{sec:code}).

\begin{figure}[!htbp]
        \centering
        \includegraphics[width=0.8\linewidth]{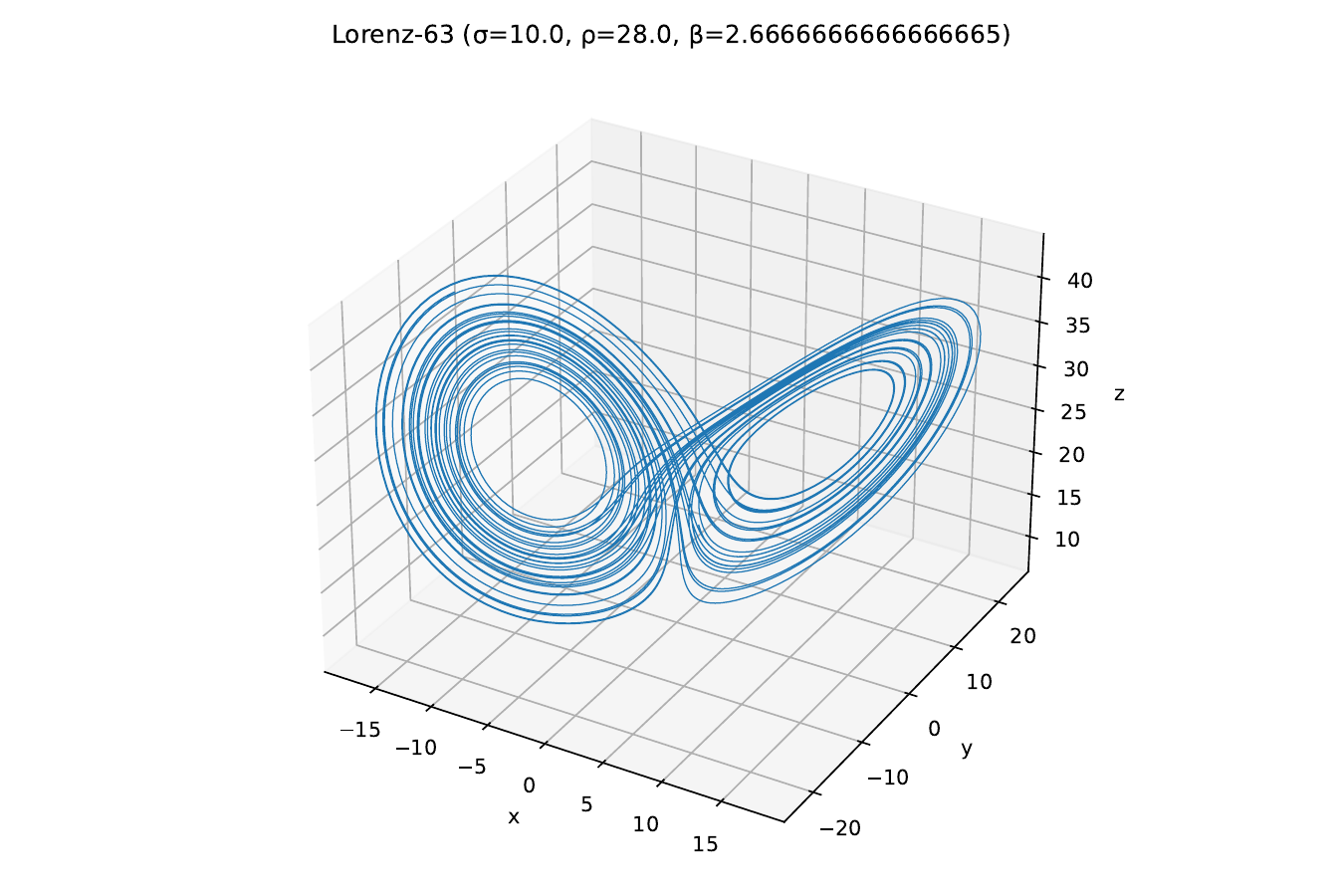}
    \caption{A sample from the Lorenz '63 attractor with standard parameter choices.}
    \label{fig:lorenzts}
\end{figure}

\subsection{Lorenz '63}\label{sec:l63}

Our first test case is the well-known Lorenz '63 system. This system of differential equations is an early cellular convection model (\cite{Lorenz63}) that is at the heart of many didactic examples in chaos theory and dynamical systems. See \href{https://joshdorrington.github.io/L63_simulator/}{this website} for a delightful interactive simulation courtesy of Josh Dorrington. A sampling from the Lorenz '63 system is shown in Figure \ref{fig:lorenzts}, where the familiar butterfly shape is apparent. Unsurprisingly, it is commonly accepted that there are two regimes in Lorenz '63, the two `wings' of the butterfly. Many of the systems' trajectories transition back and forth between these regimes at chaotic intervals.

\cite{Strommen_2022} studies the Lorenz '63 system using a bifiltration (see \cite{Janes_Mpimbo_Mwanzalima_2025}), where one dimension is the standard Rips filtration radius, and the other dimension is a density threshold for sampled points in the attractor. They are able to detect the two regimes at density thresholds admitting most of the sampled points. (Figure 14 in \cite{Strommen_2022}.) They recover the actual cycles representing their persistence using Persloop (\cite{persloop}), software designed for computing persistent cycles in $H_1$.

In this section we present experimental results using our method of Dowker homology on asymmetric graphs. As is highlighted in this paper, Dowker homology differs from Rips homology in its respect for the direction of the trajectory. In our setting, with a binned time series, this means that in the corresponding complex, the filtration parameter captures flow between regions, relating time and position.

\begin{figure}[t]
    \begin{minipage}{0.6\textwidth}
        \centering
        \includegraphics[width=\linewidth]{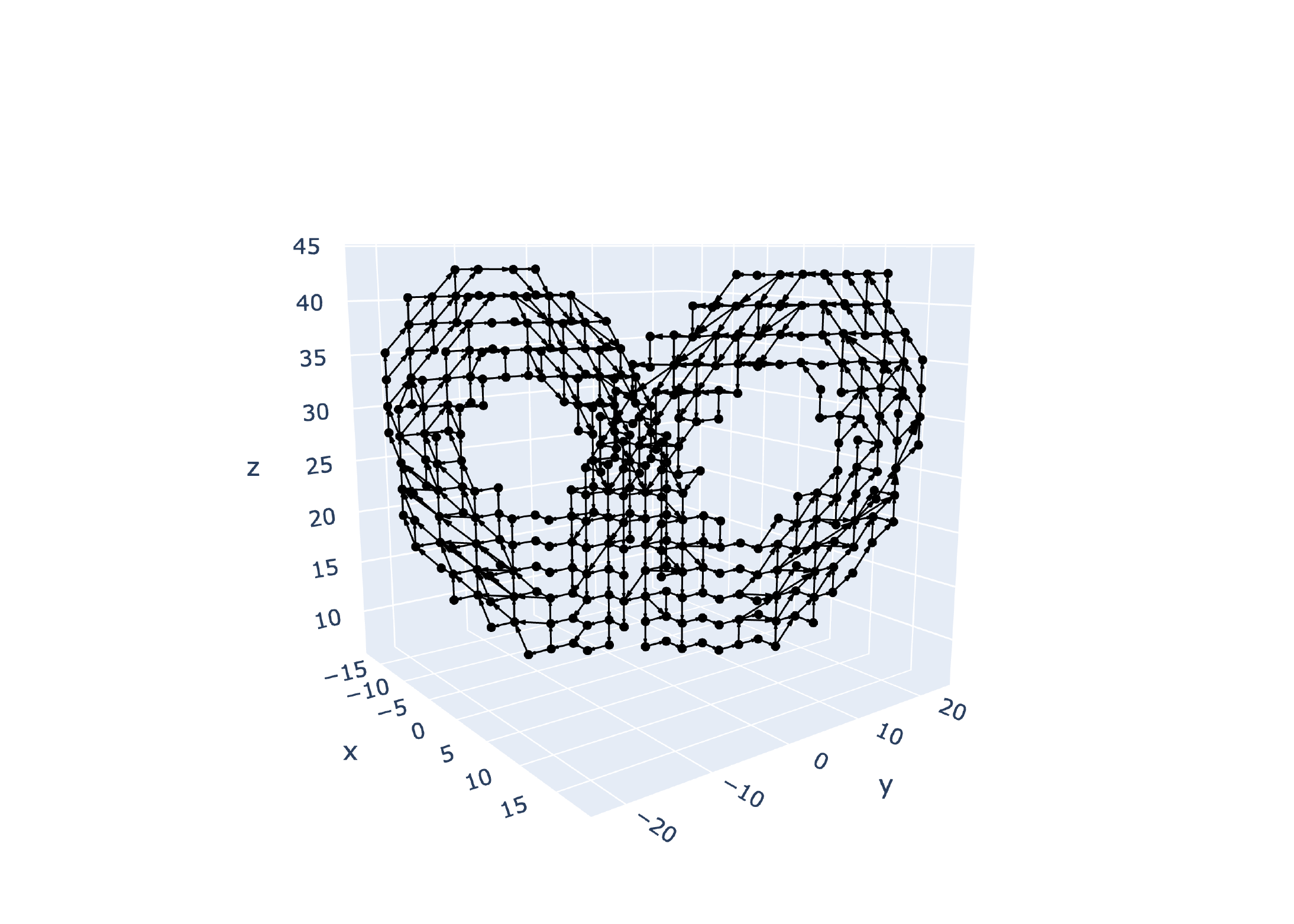}
    \end{minipage}%
    \begin{minipage}{0.4\textwidth}
        \centering
        \includegraphics[width=\linewidth]{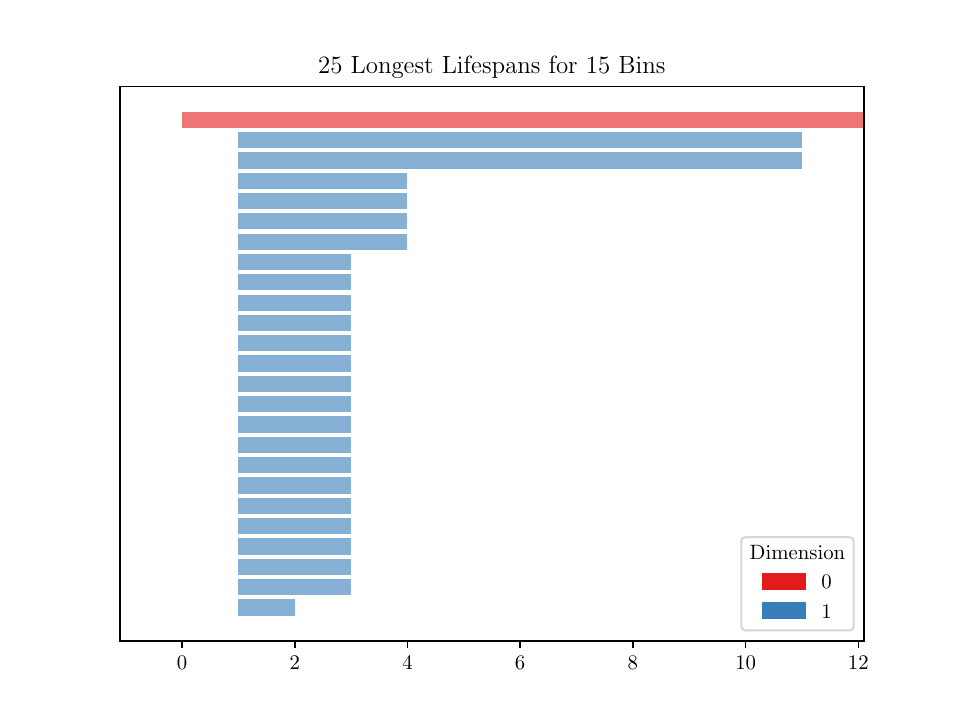}
    \end{minipage}
    \caption{The binned trajectory graph (left) with $b = 15$ and corresponding Dowker barcode (right).}
    \label{fig:l63graph_bin15}
\end{figure}

\begin{figure}[t]
    \begin{minipage}{0.6\textwidth}
        \centering
        \includegraphics[width=\linewidth]{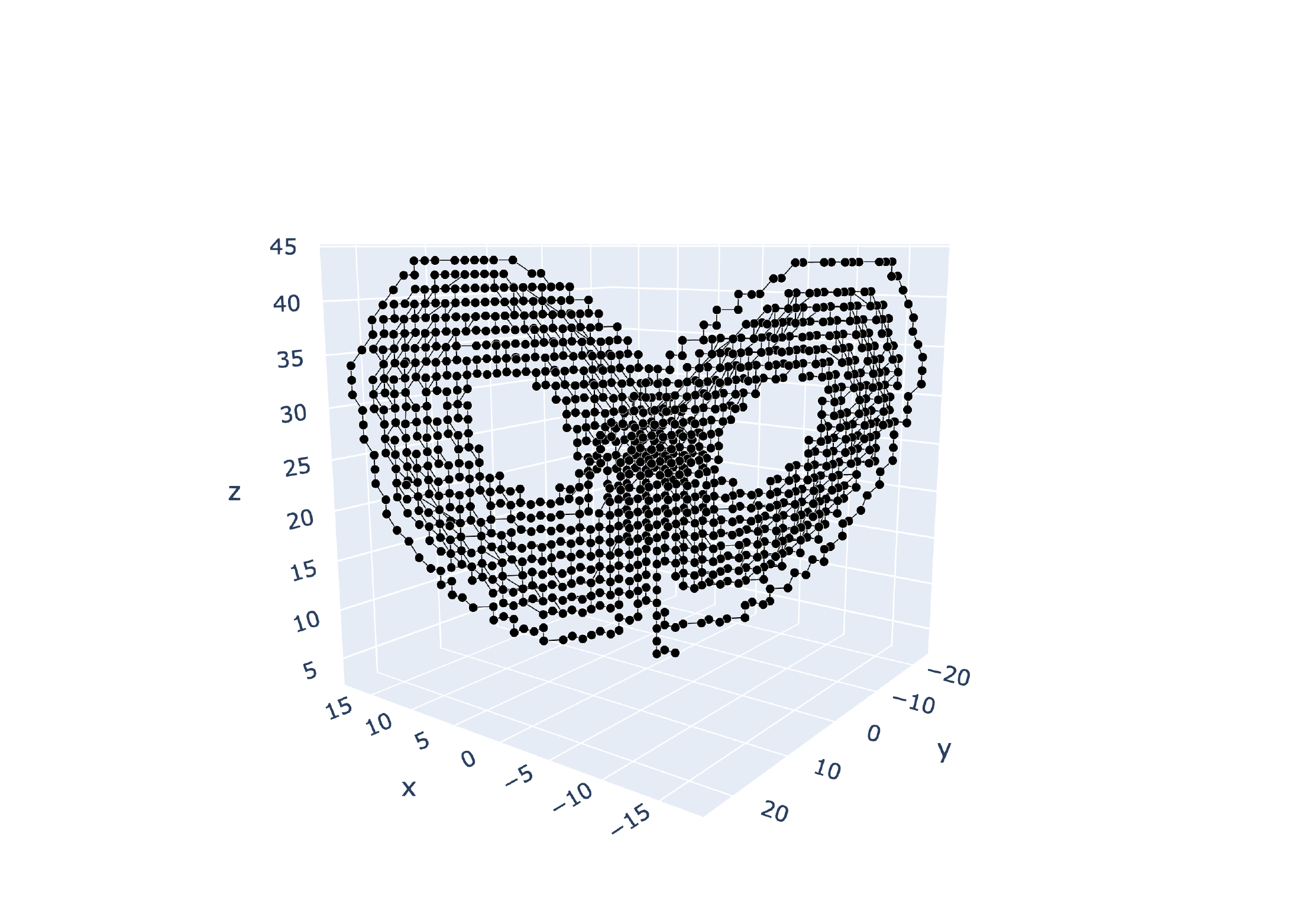}
    \end{minipage}%
    \begin{minipage}{0.4\textwidth}
        \centering
        \includegraphics[width=\linewidth]{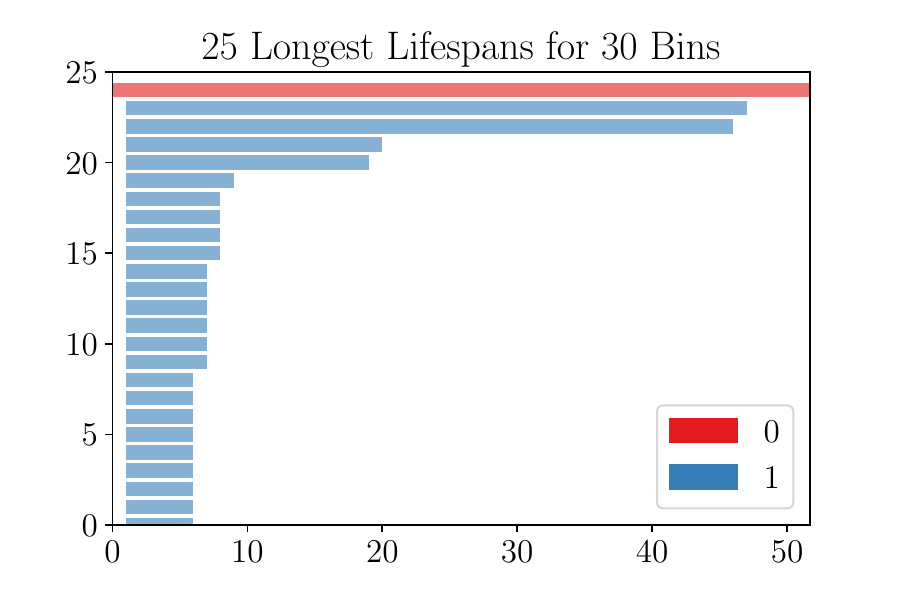}
    \end{minipage}
    \caption{The binned trajectory graph (left) with $b = 30$ and corresponding Dowker barcode (right).}
    \label{fig:l63graph_bin30}
\end{figure}

A coarse grained network of Lorenz '63 is shown in Figure \ref{fig:l63graph_bin15}. This example uses $15$ bins in each dimension. Notice the two $H_1$ classes which persist much longer than the rest. These correspond to the loop flows encompassing each `wing' of the attractor. This illustrates the two regimes of the attractor.

The symmetry of the flows, both spatially and temporally, allow for the similarity in their persistence in Figure \ref{fig:l63graph_bin15}. Let us consider the role of edge directions and highlight the additional temporal information captured. Figure \ref{fig:l63edge_rand} shows two alternative Dowker persistence barcodes. Each was obtained from the network depicted in \ref{fig:l63graph_bin15} after a randomization of its edge directions. In both cases we see a radically altered distribution. In \ref{fig:l63edge_rand_a} two prominent cycles remain, but no longer have comparable persistence, while in \ref{fig:l63edge_rand_b} one cycle persists far longer than the others.

We can interpret these examples using the cycle graph characterization presented in Section \ref{sec:cycle_persistence}. Seen roughly, the directed graph we create from a trajectory in the Lorenz system is sampled from two intersecting punctured disks (the two regimes of Lorenz 63), with many possible consistently oriented graph cycles circling each disk. Recall that in proposition \ref{prop:hom_to_bdry} we found that any nontrivial element of the first homology is generated by a graph cycle and is thus closed by one as well. Consider one encircling consistently oriented cycle subgraph. For larger cycle size, a randomization of edge directions is more likely to yield a cycle graph whose path completion has a large minimal dominating set and thus cannot close a first homology class. This means that the cycles in one disk which previously closed the first homology class are less likely to do so after edge randomization. This results in the decoupling of the two prominent first homology classes in Figure \ref{fig:l63graph_bin15}.

This example illustrates the additional flow information captured in Dowker persistence. Alternative edge orientations which would shift our interpretation of the system can radically alter persistence in this framework. This altered flow information would not correspond to alteration in positional data, and thus would not be picked up by a point cloud based persistence, like Rips. When a Rips filtration finds two persistent homology classes, it points to a similarity of point cloud structure, whereas such a result in an asymmetric Dowker framework points more to a similarity of the flows.

\begin{figure}[t]
    \begin{subfigure}{0.48\textwidth}
        \centering
        \includegraphics[width=\linewidth,trim={0 0 0 1.35cm},clip]{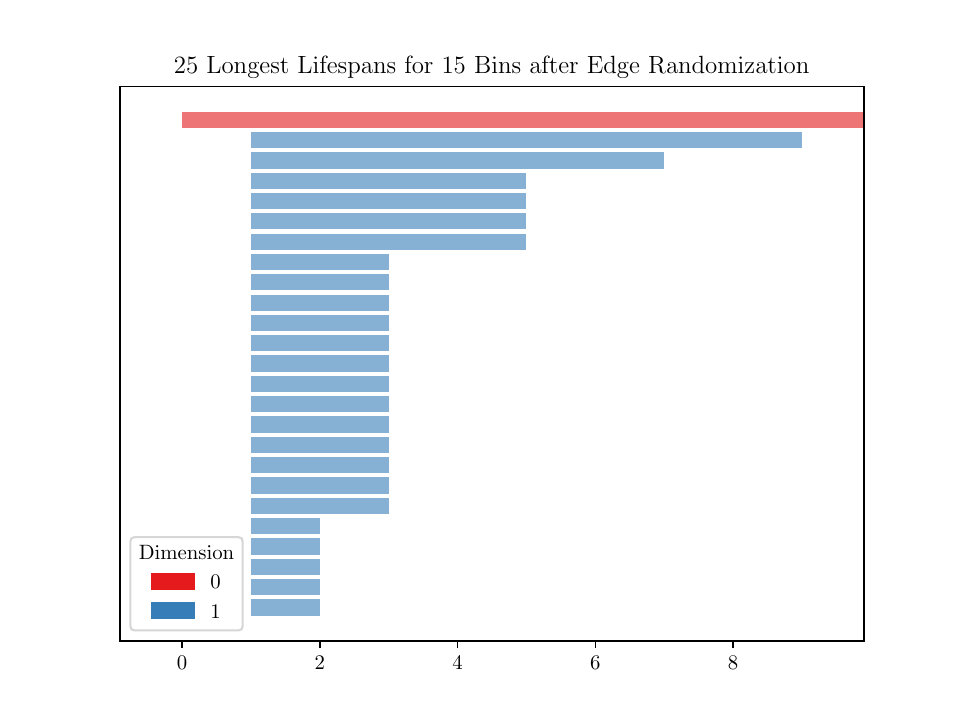}
        \caption{ }
        \label{fig:l63edge_rand_a}
    \end{subfigure}%
    \begin{subfigure}{0.48\textwidth}
        \centering
        \includegraphics[width=\linewidth,trim={0 0 0 1.35cm},clip]{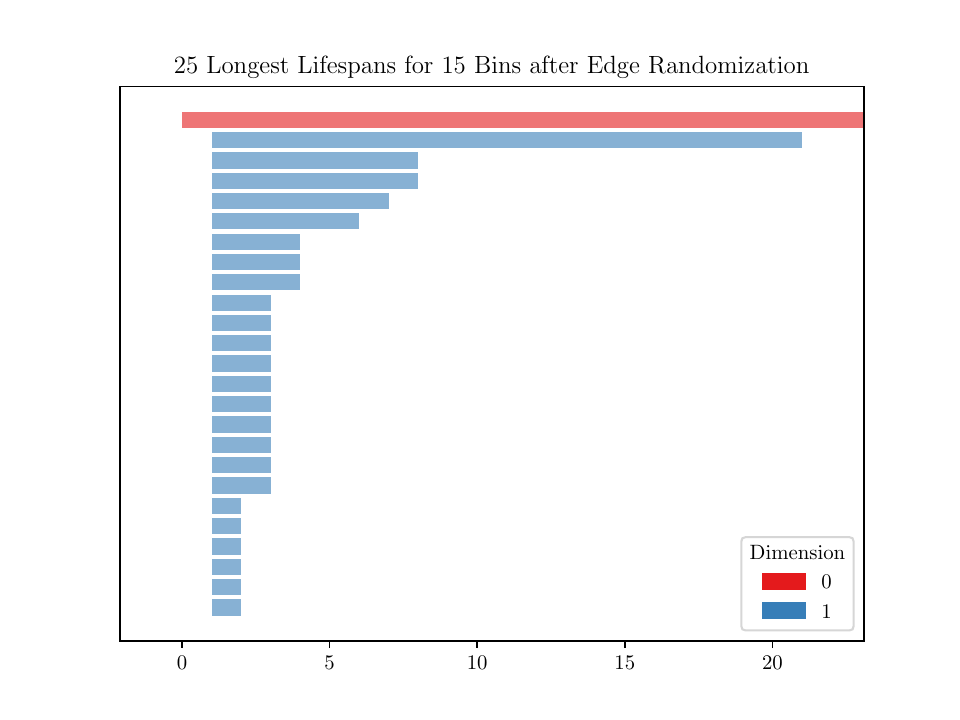}
        \caption{}
        \label{fig:l63edge_rand_b}
    \end{subfigure}
    \caption{ Given the Network depicted in Figure \ref{fig:l63graph_bin15}, we consider the persistence of the same network with randomized edge directions. In this figure, the left and right barcodes depict the 25 longest persisting Dowker homology classes in two instances of this randomization procedure.}\label{fig:l63edge_rand}
\end{figure}

In Figure \ref{fig:l63graph_bin30} we show this framework on the same trajectory data as Figure \ref{fig:l63graph_bin15} but this time with $30$ bins in each dimension. For this very granular binning we see some separation between groups of trajectories in the Lorenz loop flows as represented in the persistent $H_1$ homology classes. First, notice the two longest lasting cycle classes, each persisting close to $50$ steps. These correspond to long cycles about each wing of the Lorenz butterfly, which were far from the main mass of trajectories. As we saw in Figure \ref{fig:l63graph_bin15}, these are more connected to the rest of the graph at coarser binnings, getting incorporated into their homology classes. The second pair of long-lived cycle classes also persist markedly longer than the rest. These correspond to the two loop flows through the Lorenz wings, captured by the rest of the trajectory.

Optimizing the number of bins or how the space is quotiented is an area for further study. We note that a division of the traversed phase space into bins as opposed to choosing a fixed bin size can create some instability, as removing points may shift the bins enough to change the resultant persistent homology.

Finally, to quantitatively showcase the robustness of our pipeline to missing or fragmented observations, we performed a controlled stability experiment using trajectories of the Lorenz system. These numerical results follow up on the description of this use case in Section \ref{sec:dyn_traj}.  For a given trajectory duration $T$, we first compute the Dowker persistence diagram of the full trajectory after converting it into a directed graph. We then simulate signals with missing data by sampling $n$ segments of approximate length $T/n$ on the attractor. 

We then used the bottleneck distance (see, for example, \cite{edelsbrunner2010computational}) to compare the persistence of the full trajectory to the union of the fragmented trajectories. A persistence diagram takes every bar in the barcode and plots it as an ordered pair $(birth, death)$ in a coordinate plane (persistence diagram). Features that have short lifespans are plotted close to the diagonal $y = x$ line and points that have long lifespans are plotted further away from the diagonal. The bottleneck distance takes two persistence diagrams and matches the features to each other (points on the diagonal can be used if there are not the same number of features). The bottleneck distance is the maximum $L_\infty$ distance between two points in the matching that has smallest such maximum distance. A small bottleneck distance indicates more similar persistence diagrams, and the opposite for a larger bottleneck distance. 

Dowker persistence was recomputed on the combined graph, and the bottleneck distance between the full-trajectory and fragmented-trajectory diagrams was measured separately for $H_0$ and $H_1$ homologies. Repeating this procedure across a grid of durations $T \in [5, 100]$ and segment counts $n \in [2, 22]$ produced the heatmaps in Figure \ref{fig:bottleneck_heatmaps}. The results show that the bottleneck distances remain uniformly small throughout most of the parameter range, with localized increases only when the length of the trajectory is long enough for the full trajectory to be nontrivial, but too short for the fragments to contain sufficient dynamical information. These findings suggest remarkable stability under missing-data perturbations, with the example reliably recovering the underlying topological signatures of the dynamics even with severely fragmented trajectories.
\begin{figure}[!htbp]
\includegraphics[width=\linewidth]{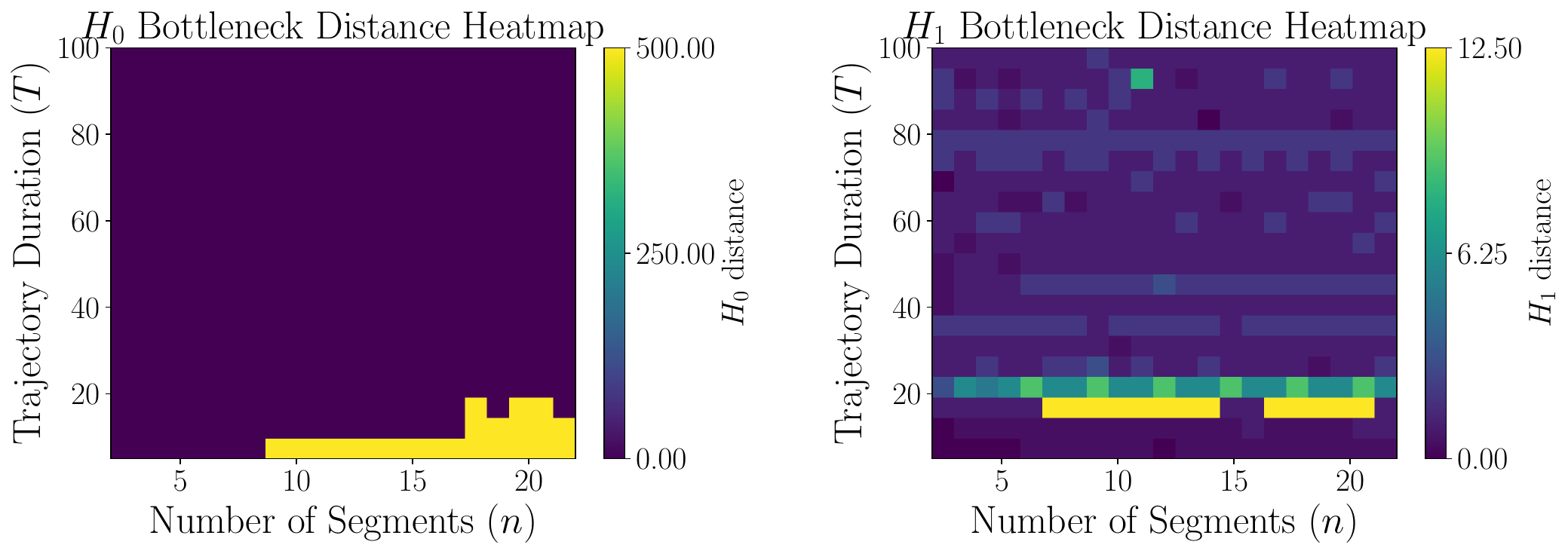}
\caption{Heatmaps for bottleneck distances between full lorenz and fragmented lorenz trajectories---for both $H_0$ and $H_1$.}
\label{fig:bottleneck_heatmaps}
\end{figure}

\subsection{Charney-DeVore}

We also used our method to compute the Dowker persistent homology of the Charney-DeVore dynamical system, a model simulating atmospheric flow with a blocking pattern (\cite{charney1979multiple}). A plot of the time series can be seen in Figure \ref{fig:cdvts}. The system is made up of several exterior rings that all loop back in to a main and much slower moving cylinder. 

\begin{figure}[t]
        \centering
        \includegraphics[width=0.8\linewidth]{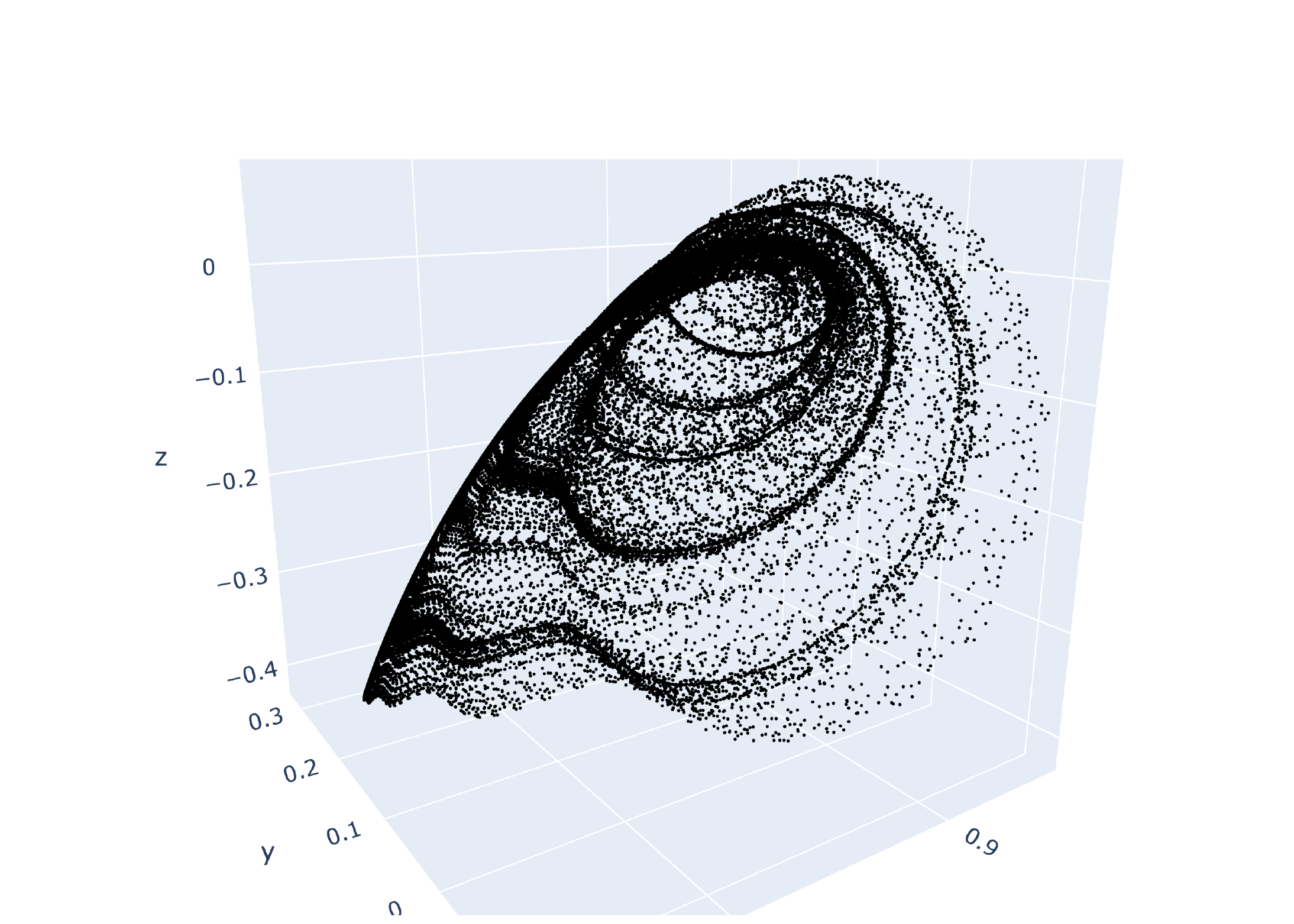}
    \caption{A sample from the Charney-DeVore time series.}
    \label{fig:cdvts}
\end{figure}

In Figure \ref{fig:cdvgraph}, our binned graph and corresponding barcodes are shown for a binning number of $b=30$, while in Figure \ref{fig:cdvbarcodes} shows the barcodes for several additional choices of $b$. One thing to note here is that the one dimensional barcodes get consistently longer as the bin number increases. This indicates that the exterior rings consistently stay in their lane and do not cross into the other rings, even at higher resolution.

This result sets our method apart from a more traditional filtration, which can have difficulty with the distributed fine scale structure presented in the CDV system. Because the loops lie in a region that is sparsely inhabited, a Rips filtration misses them, forming a large connected component.
To combat this, the authors of \cite{Strommen_2022} use a direct binning system akin to ours to filter out the low density regions in a bifiltration for the CDV system. Our framework picks these low density loops out natively because the filtration respects the flows of the trajectories.

In this experiment, we use edge weight $1$ for the directed graphs. Future research could entail weighting the edges such that the 0 dimensional barcode provides information towards differentiating the slower moving blocking central region from the faster moving exterior rings. Another direction for future research is to combine the directed binning of \cite{Strommen_2022} with our approach in a Dowker bifiltration to capture both the fine and high-density loops.

\begin{figure}[t]
    \begin{minipage}{0.6\textwidth}
        \centering
        \includegraphics[width=\linewidth]{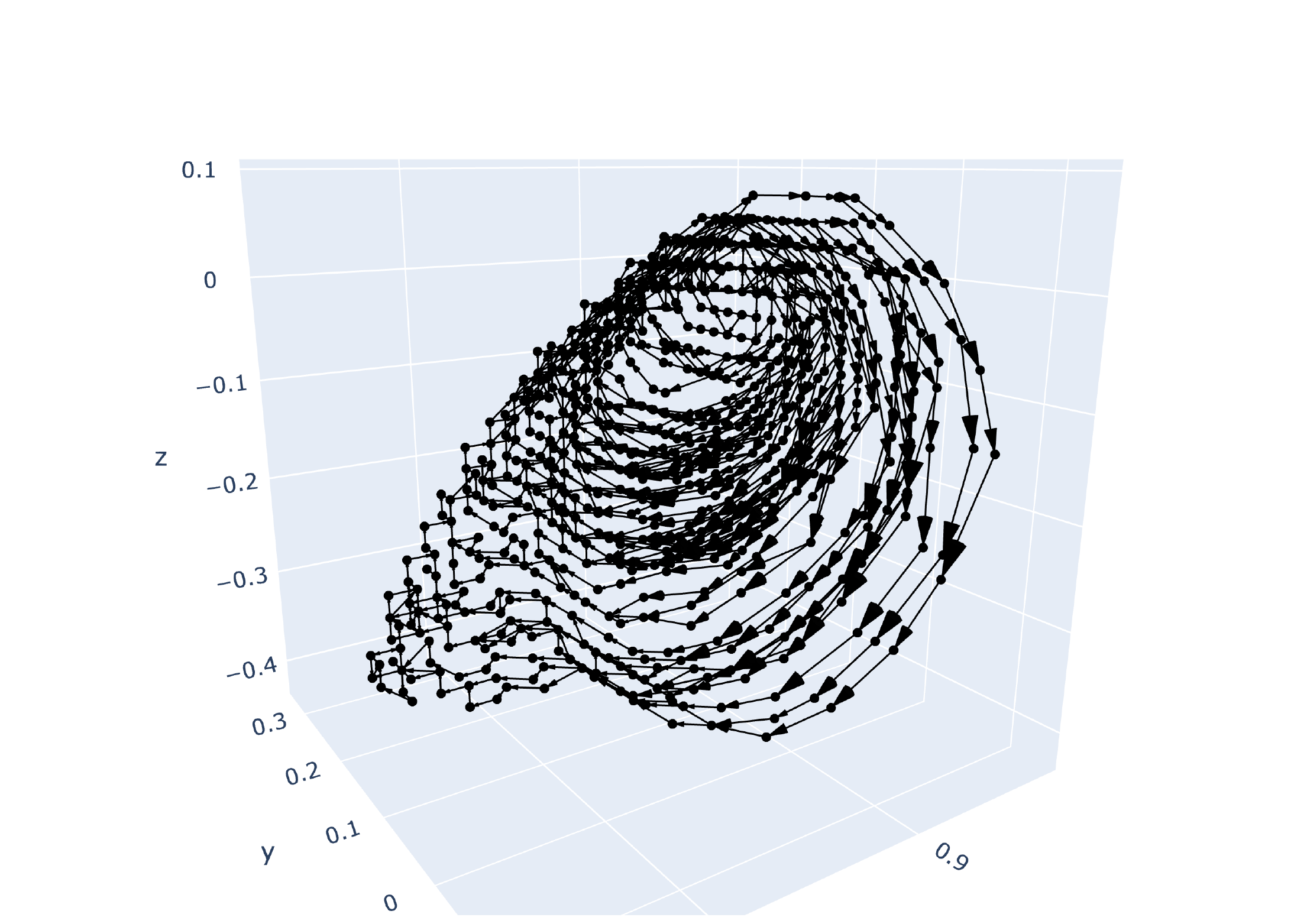}
    \end{minipage}%
    \begin{minipage}{0.4\textwidth}
        \centering
        \includegraphics[width=\linewidth,trim={0 0 0 1.35cm},clip]{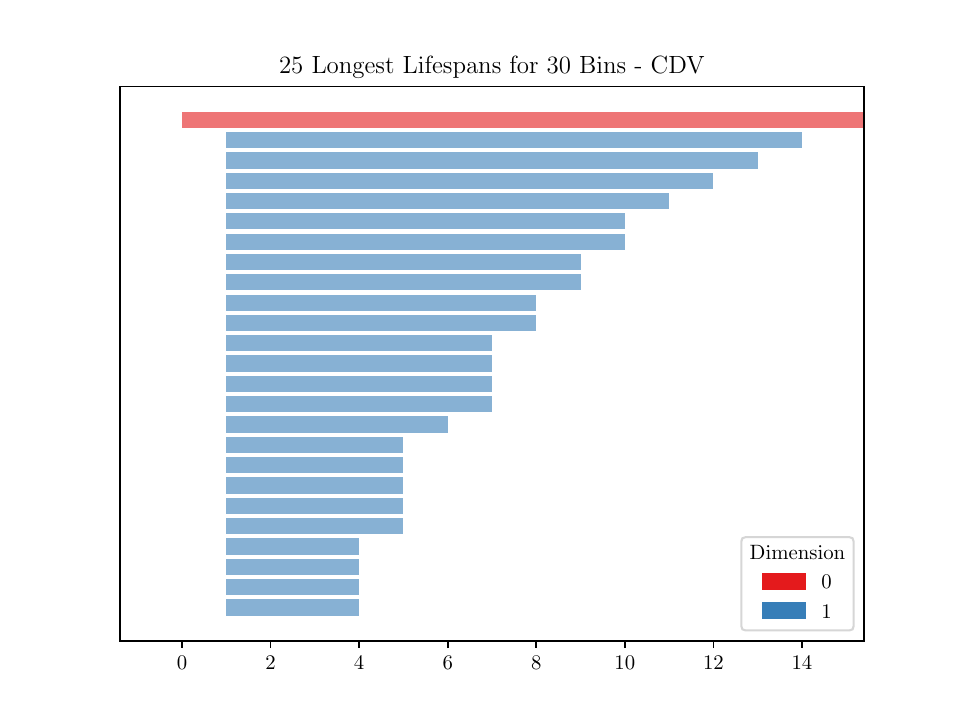}
    \end{minipage}
    \caption{The binned CDV trajectory graph (left) with $30$ bins per axis and the corresponding Dowker barcode (right), showing the 25 longest persisting homology classes.}
    \label{fig:cdvgraph}
\end{figure}

\begin{figure}[t]
    \centering
    
    \begin{subfigure}{0.33\textwidth}
        \centering
        \includegraphics[width=\linewidth,trim={0 0 0 1.35cm},clip]{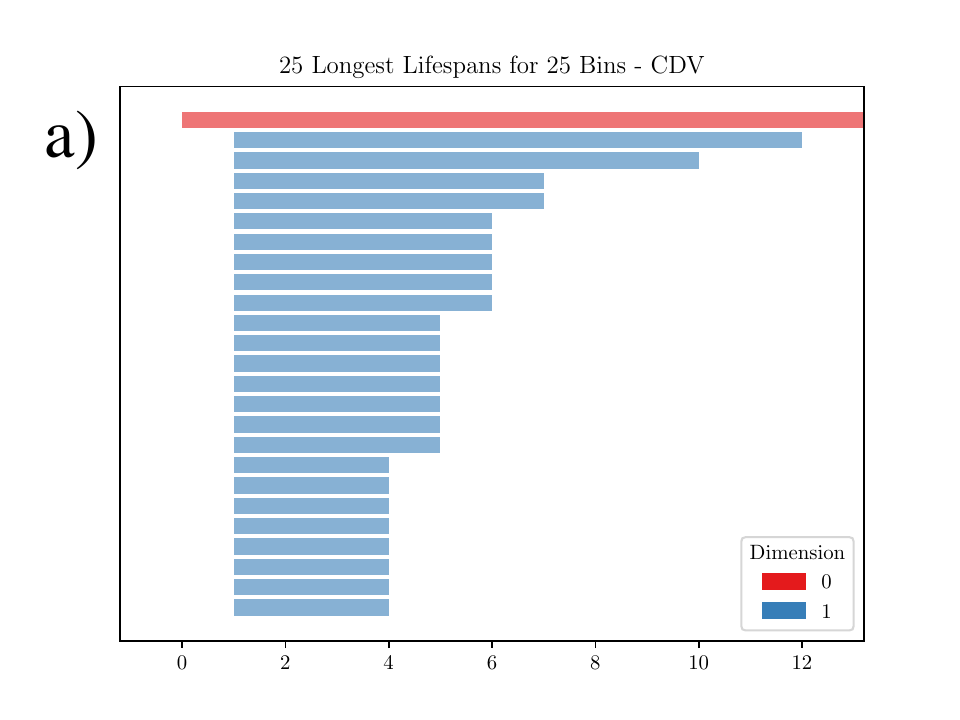}
        \caption{25 bins}
        \label{fig:cdvbarcode25}
    \end{subfigure}%
    \begin{subfigure}{0.33\textwidth}
        \centering
        \includegraphics[width=\linewidth,trim={0 0 0 1.35cm},clip]{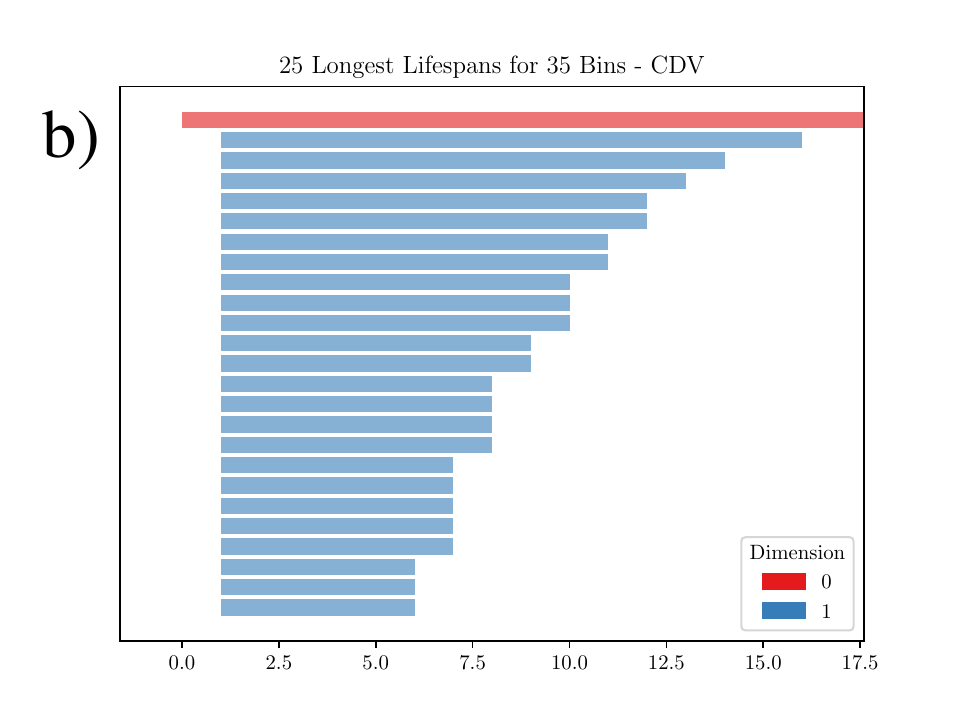}
        \caption{35 bins}
        \label{fig:cdvbarcode35}
    \end{subfigure}
    \begin{subfigure}{0.33\textwidth}
        \centering
        \includegraphics[width=\linewidth,trim={0 0 0 1.35cm},clip]{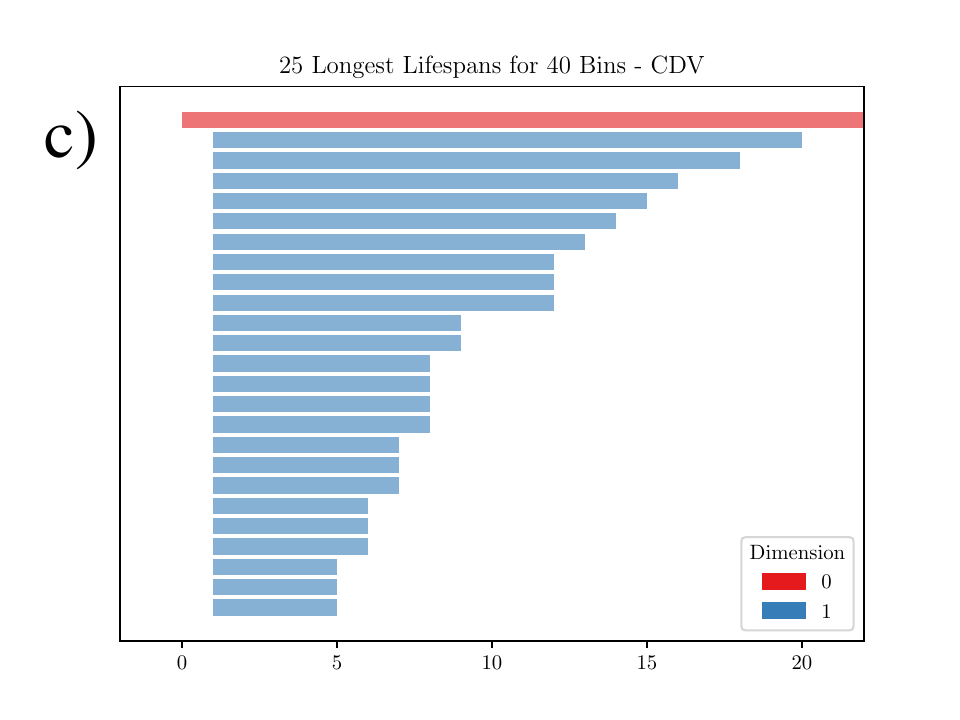}
        \caption{40 bins}
        \label{fig:cdvbarcode40}
    \end{subfigure}

    \caption{Dowker barcodes showing the 25 longest persisting homology classes for several choices of bin number in the CDV system.}
    \label{fig:cdvbarcodes}
\end{figure}

\section{Conclusion}

This paper aims to offer an avenue of pursuit towards topological characterization of large dynamical systems. Our approach is notable for two reasons: the symbolization of the dynamical system into bins, which allows us to encode spatial and temporal information into the base space, and the use of an asymmetric homology theory based on the Dowker complex. With this approach, we connect persistence to the graph theoretic notions of the dominating sets and directed paths, toward better connection with underlying dynamics.

The analytical section of this paper develops the theory of the Dowker persistent homology for weighted, directed graphs. First, we classify the 1D persistent homology of the consistently oriented weighted cycle graph in Proposition \ref{prop:concycle}. Then, we move on to cycles that are not consistently oriented. In Propositions \ref{prop:inconcycledom3}, \ref{prop:inconcycledom2} and \ref{prop:inconcycledom1}, we show that, in the inconsistently oriented case, the 1D barcode depends upon the size of the dominating set of the cycle. In doing so, we fully classify the 1D persistent homology of any directed cycle graph, regardless of weights or orientation. We add a result on the 1D persistent homology of the wedge sums of two graphs, showing that the 1D barcode of the wedge sum is the union of the barcodes. Throughout, the results are accompanied by computational examples. Our theoretical discussion closes by framing the sampling process of trajectory data in graph-theoretical terms, shedding light on how our theoretical results fit into the framework of analyzing time series samples of dynamical systems. 

We close with numerical experiments on the Lorenz '63 and Charney-DeVore dynamical systems, highlighting how our method differs from traditional point cloud persistence for dynamical trajectory data. Our tests also suggest that this method is robust to subsampling, where data may be missing or corrupted.

We offer a couple additional directions in which we believe that this research could be expanded. First could be to modify the encoding of sequential information into the quotient graph, either by the consideration of different quotient graph edge weight functions or encoding this information in hypergraph edges. This could allow for the adjustment of the kinds of dynamical features picked up in the persistence or add additional robustness. Second, we believe there is more to be done in exploring the graph-theoretic properties of our binned graphs. For example, using network clustering methods could reveal other kinds of state space organization.

Finally, we believe future work could combine this work with dynamic choices of phase space partitions, or bin number.

\backmatter

\section*{Code Availability}\label{sec:code}
The source code used for this work is available as an installable python library at \url{https://github.com/cactismath/CACTIS}.

\section*{Acknowledgements}

This project began as a subgroup at a 2024 American Mathematical Society Mathematics Research Community (AMS MRC), of which \cite{Faranda_2024} was the starting point. We are grateful to the AMS for running the MRC program, and to Davide Faranda, Theo Lacombe, Nina Otter and Kristian Strommen for organizing ours specifically. Other researchers who were at the MRC who have helped shape this paper include Fangfei Lan, Enrique Alvarado, and Josh Dorrington. 

CP would also like to thank fellow MRC participant Soheil Anbouhi, and Warren Wilson College students Willow Solomon, Kian De Rensis-Williams, and Ryan Kiser for helpful conversations in the time since the MRC. 

TT would like to thank Mike Miller Eismeier and Alice Patania for helpful conversations related to the content of this paper.

\section*{Declarations}

This material is based upon work supported by the National Science Foundation Graduate 
Research Fellowship Program under Grants No. 2235204 and No. 1916439. Any opinions, findings, 
and conclusions or recommendations expressed in this material are those of the authors 
and do not necessarily reflect the views of the National Science Foundation.








\bibliography{bibliography}

\end{document}